\documentclass{nhm-prep}
\usepackage{amsmath,paralist,amssymb,amsfonts,epsfig,float,cite,comment,mathtools,amsthm}
\usepackage[colorlinks=true]{hyperref}
\hypersetup{urlcolor=blue, citecolor=red}

\usepackage{bbm,bm}
\usepackage{multirow}	
\usepackage{enumerate}
\usepackage{graphicx}
\usepackage{stmaryrd}

\usepackage[table]{xcolor}
\hypersetup{
    colorlinks,
    linkcolor={red!50!black},
    citecolor={blue!50!black},
    urlcolor={blue!80!black}
}

\usepackage[a4paper,tmargin=2.7cm,bmargin=2.4cm, rmargin=2.cm,lmargin=2.cm]{geometry}

\usepackage{booktabs, multirow}
\usepackage{enumitem}

\definecolor{lightblue}{rgb}{0.22,0.45,0.70}
\definecolor{lightgreen}{rgb}{0.22,0.50,0.25}



%
%
\newcommand\R{\mathbb{R}}
\newcommand\bbS{\mathbb{S}}
\newcommand\bbM{\mathbb{M}}

%
%
\newcommand\cC{\mathcal{C}}

\newcommand\cT{\mathcal{T}}
\newcommand\cP{\mathcal{P}}

\newcommand{\cF}{\mathcal{F}}
%
%
\newcommand\mt{\mathtt{t}}
%
%
\newcommand\bn{\boldsymbol{n}}
\newcommand\bF{\boldsymbol{f}}
\newcommand\bG{\boldsymbol{g}}
\newcommand\bu{\boldsymbol{u}}
\newcommand\bv{\boldsymbol{v}}
\newcommand\bw{\boldsymbol{w}}
\newcommand\be{\boldsymbol{e}}

\newcommand\bx{\boldsymbol{x}}
\newcommand\bI{\boldsymbol{I}}
\newcommand\bPi{\boldsymbol{\Pi}}
\newcommand\beps{\boldsymbol{\varepsilon}}
\newcommand\bsig{\boldsymbol{\sigma}}
\newcommand\btau{\boldsymbol{\tau}}

\newcommand\bnabla{\boldsymbol{\nabla}}

%
%
%
\newcommand\tD{\mathtt{D}}
\newcommand\tr{\mathop{\mathrm{tr}}\nolimits}

\newcommand\bdiv{\mathop{\mathbf{div}}\nolimits}
\newcommand\sdiv{\mathop{\mathrm{div}}\nolimits}
%
%
\newcommand{\jump}[1]{\llbracket #1 \rrbracket}
\newcommand{\mean}[1]{\left\{\kern-1.ex\left\{ #1 \right\}\kern-1.ex\right\}}
\newcommand{\vertiii}[1]{{\left\vert\kern-0.25ex\left\vert\kern-0.25ex\left\vert #1 
    \right\vert\kern-0.25ex\right\vert\kern-0.25ex\right\vert}}

\DeclarePairedDelimiter\norm{\lVert}{\rVert}
\DeclarePairedDelimiter{\inner}{(}{)}
\DeclarePairedDelimiter{\set}{\{}{\}}
\DeclarePairedDelimiter{\dual}{\langle}{\rangle}
%
%
\newtheorem{remark}{Remark}[section]
\newtheorem{lemma}{Lemma}[section]
\newtheorem{theorem}{Theorem}[section]
\newtheorem{prop}{Proposition}[section]
\newtheorem{corollary}{Corollary}[section]

\numberwithin{equation}{section}
\numberwithin{figure}{section}
\numberwithin{table}{section}

\xdefinecolor{mygrey1}{rgb}{0.65,0.65,0.6375}
\xdefinecolor{mygrey2}{rgb}{0.9,0.9,0.9}
\newcolumntype{g}{ >{\columncolor{mygrey1}} c }
\newcolumntype{m}{ >{\columncolor{mygrey2}} c }
\setlength{\tabcolsep}{2.pt}


\allowdisplaybreaks


\title[Pure--stress DG formulation of Brinkman equations with strong symmetry]{A new DG method for {a} pure--stress formulation of the Brinkman problem with strong symmetry$^*$}
\thanks{$^*$\textit{Funding}: This research was  supported by Spain's Ministry of Economy Project PID2020-116287GB-I00, by the Monash Mathematics Research Fund S05802-3951284, and by the Australian Research Council through the Discovery Project grant DP220103160.}

\author[S. Meddahi, R. Ruiz-Baier]{}
\email{salim@uniovi.es}
\email{ricardo.ruizbaier@monash.edu}

\date{\today}
\begin{document}
\maketitle

\centerline{\scshape Salim Meddahi}
{\footnotesize
 \centerline{Facultad de Ciencias, Universidad de Oviedo}
                   \centerline{Federico Garc\'ia Lorca,  18, 33007-Oviedo, Spain}
}

\medskip
\centerline{\scshape Ricardo Ruiz-Baier}
{\footnotesize
\centerline{School of Mathematics, Monash University}
\centerline{9 Rainforest Walk, Clayton, Victoria 3800, Australia; and}
\centerline{Universidad Adventista de Chile, Casilla 7-D Chill\'an, Chile}
}

\medskip

\begin{abstract}
A strongly symmetric stress approximation is proposed for the Brinkman equations with mixed boundary conditions. The resulting formulation solves for the Cauchy stress using a symmetric interior penalty discontinuous Galerkin method. Pressure and velocity are readily post-processed from stress, and a second post-process is shown to produce exactly divergence-free discrete velocities. {We demonstrate the stability of the method with respect to a DG-energy norm and obtain error estimates that are explicit with respect to the  coefficients of the problem. We derive optimal rates of convergence for the stress and for the post-processed variables.  Moreover, under appropriate assumptions on the mesh, we prove  optimal $L^2$-error estimates for the stress.} Finally,  we provide numerical examples in 2D and 3D.  
\end{abstract}

\medskip

\noindent
\textbf{Mathematics Subject Classification:} 65N30, 65M12, 65M15, 74H15.
\medskip

\noindent
\textbf{Keywords:} Mixed finite elements, Brinkman problem, discontinuous Galerkin method,  parameter-robust error estimates.


\section{Introduction} 

\subsection*{Scope and related work} 
Brinkman equations constitute one of the simplest homogenized models for viscous fluid flow in highly heterogeneous porous media. The analysis of the underlying PDE system exhibits challenges due to the presence of two parameters: viscosity and permeability. Likewise, the design and analysis of numerical methods that maintain robustness with respect to the very relevant cases of low effective viscosity (Darcy-like regime) vs very large permeability (Stokes-like regime), is still an issue of high interest.

For classical velocity--pressure formulations, a number of contributions are available to deal with the inf-sup compatibility of velocity and pressure spaces irrespective of the regime (Darcy-like or Stokes-like). See, e.g., \cite{gn12,hong16b,kan,konno11,li19}. 
On the other hand, mixed finite element formulations for Brinkman equations (that is, using other fields apart from the classical velocity--pressure pair) include pseudostress--based methods \cite{GabrielAntonio,howell16} (see also VEM counterparts in \cite{cgs17,gms18}, and \cite{Qian} for the DG case with strong imposition of pseudostress symmetry, closer to the present contribution), vorticity--velocity--pressure schemes in augmented and non-augmented form \cite{alvarez16,anaya15,anaya16,anaya15b,vassilevski14}. There, the analysis of continuous and discrete problems depends either on the Babu\v{s}ka--Brezzi theory, or on a generalized inf-sup argument.

The method advanced in this paper is based on a pure--stress formulation, obtained from taking the deviatoric part of the stress and eliminating pressure, and using the momentum equation to write velocity as a function of the external and internal forces (forcing plus the divergence of the Cauchy stress). We consider the case of mixed boundary conditions, and use a symmetric interior penalty DG discretization. The space for discrete stresses incorporates symmetry strongly, as in \cite{Qian} (see also \cite{wang} for elasticity and \cite{mr22} for viscoelasticity). 
At the discrete level, the velocity and the pressure are easily recovered in terms of the discrete Cauchy stress through post-processing. While the pressure post-processing is derivative-free, the usual post-process of velocity through the momentum balance entails taking the discrete divergence of the approximate stress, therefore leading to accuracy loss. As a remedy we propose a more involved reconstruction, starting from the initial post-processed velocity and solving an additional problem in mixed form using $H(\sdiv)$-conforming approximations. The newly computed velocity is exactly divergence-free and we prove that it enjoys the same convergence order as the stress approximation.    

Other advantages of the present formulation include the physically accurate imposition of outflow boundary conditions fixing the normal traces of the full Cauchy stress (which is not straightforward to incorporate with vorticity- or pseudostress-based methods), having symmetric and positive definite linear systems after discretization, and straightforwardly handling heterogeneous and anisotropic permeability distributions.  In this regard, the analysis of the formulation uses a DG-energy norm chosen in such a way that the error estimates are  independent of the jumps of the  permeability. This implies a similar property concerning the estimates for the velocity approximation. However, for the pressure we can only achieve stability and {optimal rates of} convergence in terms of the mesh parameter.

\medskip 
\noindent\textbf{Outline.} The contents of the paper have been laid out in the following manner. The remainder of this Section lists useful notation regarding tensors, and it recalls the definition of Sobolev spaces, inner products, and integration by parts. In Section~\ref{sec:model} we state the Brinkman problem, specify assumptions on the model coefficients, and derive a weak formulation written only in terms of stress. Section~\ref{sec:prelim} gathers the preliminary concepts needed for the construction and analysis of the discrete problem. Here we include mesh properties, recall useful trace and inverse inequalities, define suitable projectors, and establish approximation properties in conveniently chosen norms. The definition of the stress-based DG method and the proof its consistency and optimal convergence {in the stress variable} is presented in Section~\ref{sec:dg}. The velocity and pressure post-processing, together with the corresponding error estimates, are given in Section~\ref{sec:postpr}. Next, {in Section~\ref{sec:L2} we derive} optimal error bounds for the stress in the $L^2$-norm, and Section~\ref{sec:results} contains a collection of numerical examples that confirm experimentally the convergence of the method, and also showcase its application into typical flow problems in 2D and 3D. 

\medskip 
\noindent\textbf{Notational preliminaries.} 
We denote the space of real matrices of order $d\times d$ by $\bbM$ and let $\bbS:= \set{\btau\in \bbM;\ \btau = \btau^{\mt} } $  be the subspace of symmetric matrices, where $\btau^{\mt}:=(\tau_{ji})$ stands for the transpose of $\btau = (\tau_{ij})$. The component-wise inner product of two matrices $\bsig, \,\btau \in\bbM$ is defined by $\bsig:\btau:= \sum_{i,j}\sigma_{ij}\tau_{ij}$. We also introduce the deviatoric part $\btau^{\tD}:=\btau-\frac{1}{d}\left(\tr\btau\right) \bI$ of a tensor $\btau$ , where $\tr\btau:=\sum_{i=1}^d\tau_{ii}$ and $\bI$ stands here for the identity in $\bbM$.

Let $\Omega$ be a polyhedral Lipschitz bounded domain of $\R^d$ $(d=2,3)$, with boundary $\partial \Omega$. Along this paper we convene to apply all differential operators row-wise.  Hence, given a tensorial function $\bsig:\Omega\to \bbM$ and a vector field $\bu:\Omega\to \R^d$, we set the divergence $\bdiv \bsig:\Omega \to \R^d$, the   gradient $\bnabla \bu:\Omega \to \bbM$, and the linearized strain tensor $\beps(\bu) : \Omega \to \bbS$ as
\[
(\bdiv \bsig)_i := \sum_j   \partial_j \sigma_{ij} \,, \quad (\bnabla \bu)_{ij} := \partial_j u_i\,,
\quad\hbox{and}\quad \beps(\bu) := \frac{1}{2}\left[\bnabla\bu+(\bnabla\bu)^{\mt}\right].
\]
 
 For $s\in \R$, $H^s(\Omega,E)$ stands for the usual Hilbertian Sobolev space of functions with domain $\Omega$ and values in E, where $E$ is either $\R$, $\R^d$ or $\bbM$. In the case $E=\R$ we simply write $H^s(\Omega)$. The norm of $H^s(\Omega,E)$ is denoted $\norm{\cdot}_{s,\Omega}$ and the corresponding semi-norm $|\cdot|_{s,\Omega}$, indistinctly for $E=\R,\R^d,\bbM$. We use the convention  $H^0(\Omega, E):=L^2(\Omega,E)$ and let $(\cdot, \cdot)$ be the inner product in $L^2(\Omega, E)$, for $E=\R,\R^d,\bbM$, namely,
\[
  (\bu, \bv):=\int_\Omega \bu\cdot\bv,\ \forall \bu,\bv\in L^2(\Omega,\R^d),\quad  (\bsig, \btau):=\int_\Omega \bsig:\btau,\ \forall \bsig, \btau\in L^2(\Omega,\bbM). 
\]
 
 The space $H(\sdiv, \Omega)$ stands for the vector fields $\bv\in L^2(\Omega, \R^d)$ satisfying $\sdiv \bv\in L^2(\Omega)$. We denote the corresponding norm $\norm{\bv}^2_{H(\sdiv,\Omega)}:=\norm{\bv}_{0,\Omega}^2+\norm{\sdiv\bv}^2_{0,\Omega}$. Similarly, the space of tensors in $L^2(\Omega, \bbS)$ with divergence in $L^2(\Omega, \R^d)$ is denoted $H(\bdiv, \Omega, \bbS)$. We maintain the same notation $\norm{\btau}^2_{H(\bdiv,\Omega)}:=\norm{\btau}_{0,\Omega}^2+\norm{\bdiv\btau}^2_{0,\Omega}$ for the corresponding norm. Let $\bn$ be the outward unit normal vector to {$\partial \Omega$}. The Green formula
\[
(\btau, \beps(\bv)) + (\bdiv \btau, \bv) = \int_{\partial \Omega} \btau\bn\cdot \bv\qquad  \forall \bv \in H^1(\Omega,\R^d),
\] 
can be used to extend the normal trace operator $ \cC^\infty(\overline \Omega, \bbS)\ni \btau \to (\btau|_{\partial \Omega})\bn$ to a linear continuous mapping $(\cdot|_{\partial \Omega})\bn:\, H(\bdiv, \Omega, \bbS) \to H^{-\frac{1}{2}}(\partial \Omega, \R^d)$, where $H^{-\frac{1}{2}}(\partial \Omega, \R^d)$ is the dual of $H^{\frac{1}{2}}(\partial \Omega, \R^d)$.

Throughout this paper, we shall use the letter $C$ to denote a generic positive constant independent of the mesh size  $h$ and the physical parameters $\kappa$ {and $\mu$}, that may stand for different values at its different occurrences. Moreover, given any positive expressions $X$ and $Y$ depending on $h$,  $\kappa$, {and $\mu$},  the notation $X \,\lesssim\, Y$  means that $X \,\le\, C\, Y$.

\section{The pure-stress formulation of the Brinkman problem}\label{sec:model}
Let $\Omega\subset \R^d$ be a bounded and connected Lipschitz domain with boundary $\Gamma:=\partial \Omega$. Our purpose is to solve the  Brinkman model  
\begin{subequations}
\begin{align}
		\bsig & = 2\mu \beps(\bu) - p \bI \quad \text{in $\Omega$}, \label{beq1}
	\\[1ex]
	\bu  &= \frac{\kappa}{\mu} (\bF + \bdiv \bsig) \quad \text{in $\Omega$}, \label{beq2}
	\\[1ex]
	\sdiv \bu &= 0 \quad \text{in $\Omega$}, \label{beq3}
\end{align}\end{subequations}
that describes the flow of a fluid with dynamic viscosity $\mu>0$, with velocity field $\bu:\Omega \to \R^d$ and pressure $p:\Omega \to \R$, in a porous medium  characterized by a permeability coefficient $\kappa\in L^\infty(\Omega)$. The volume force is $\bF\in L^2(\Omega, \R^d)$ is given and we assume that 
\[
  0< \kappa_-  \leq  \kappa(\bx) \leq \kappa_+ \quad \text{a.e. in $\Omega$}.
\]
We assume a no-slip boundary condition $\bu = \boldsymbol{0}$  on  a subset $\Gamma_D\subset\Gamma := \partial \Omega$ of positive surface measure and impose the normal stress boundary condition $\bsig\bn = \boldsymbol{0}$ on  its complement $\Gamma_N:= \Gamma \setminus \Gamma_D$, where $\bn$ represents the exterior unit normal vector on $\Gamma$. In the case $\Gamma_D = \Gamma$ we impose the zero mean value restriction $\inner*{ p, 1} = 0$ on the pressure to enforce uniqueness. 

We want to impose the stress tensor $\bsig$ as a primary variable. To this end, we write the deviatoric part of \eqref{beq1}  and use equation \eqref{beq2} to eliminate $p$ and $\bu$, respectively, and end up with the boundary value problem
\begin{subequations}
\begin{align}
	\frac{1}{2}\bsig^\tD & = \beps\Big({\kappa}(\bdiv \bsig + \bF) \Big)  \quad \text{in $\Omega$}, \label{d1}
	\\[1ex]
	\frac{\kappa}{\mu}(\bdiv \bsig + \bF) &= \mathbf 0 \quad \text{on $\Gamma_D$}, \label{d2}
	\\[1ex]
	\bsig \bn &= \mathbf 0 \quad \text{on $\Gamma_N$}. \label{d3}
\end{align}\end{subequations}
In the case of a subset $\Gamma_N$ with positive surface measure, the essential boundary condition \eqref{d3} requires the introduction of the closed subspace of $H(\bdiv, \Omega, \bbS)$ given by 
\[
H_N(\bdiv, \Omega, \bbS) := \set*{\btau\in H(\bdiv, \Omega, \bbS); \quad 
	\dual*{\btau\bn,\bv}_{\Gamma}= 0 
	\quad \text{$\forall\bv\in H^{\frac{1}{2}}(\partial\Omega,\R^d)$,\, $\bv|_{\Gamma_D} = \mathbf{0}$}},
\]
where $\dual*{\cdot, \cdot}_\Gamma$ holds for the duality pairing between $H^{\frac{1}{2}}(\Gamma,\R^d)$ and $H^{-\frac{1}{2}}(\Gamma,\R^d)$. Hence, the energy space is given by 
\[
  X:= \begin{cases}
  	H_N(\bdiv, \Omega, \bbS) & \text{if $\Gamma_D\neq \Gamma$},\\
  	H(\bdiv, \Omega, \bbS) & \text{if $\Gamma_D = \Gamma$}.
  \end{cases}
\]

 Testing \eqref{d1} with $\btau\in X$ and integrating by parts yields
\[
  \tfrac{1}{2}\inner*{ \bsig^\tD, \btau^\tD } = \inner*{  \bnabla\big( \kappa(\bdiv \bsig + \bF) \big), \btau } = - \inner*{   {\kappa}(\bdiv \bsig + \bF) , \bdiv \btau }.
\]
 This leads us to propose the following variational formulation of the problem: Find $\bsig\in X$ such that 
\begin{equation}\label{mvf}
a(\bsig, \btau) + \big(\kappa \bdiv \bsig, \bdiv\btau \big) = 
  - \big(\kappa \bF , \bdiv\btau \big), \quad \forall \btau \in X.
\end{equation}
where
\[
  a(\bsig, \btau):= \tfrac{1}{2}\inner*{ \bsig^\tD, \btau^\tD }  + \theta\,  \inner*{\tr \bsig, 1}\inner*{\tr \btau, 1},
\]
with $\theta = 1$ if a no-slip boundary condition is imposed everywhere on $\Gamma$ (namely, if $\Gamma_D = \Gamma$) and $\theta = 0$ otherwise. We point out that, in the case $\theta = 1$, testing problem \eqref{mvf} with $\btau =\bI$ gives $\inner*{\tr(\bsig),1}_\Omega = 0$, which corresponds to the zero mean value restriction on the pressure. We are then opting for a variational insertion of  this condition  in order to free the energy space $X= H(\bdiv, \Omega, \bbS)$ from  this constraint.   

\begin{theorem}
	The variational problem \eqref{mvf} admits a unique solution and there exists a constant $C>0$, depending $\Omega$ and $\kappa$, such that 
	\[
  \norm*{\bsig}_{H(\bdiv, \Omega)} \leq C \norm{\bF}_{0,\Omega}.
\]

\end{theorem}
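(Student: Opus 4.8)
The plan is to recognise \eqref{mvf} as a coercive variational problem on the Hilbert space $X$ and to invoke the Lax--Milgram lemma. Write $A(\bsig,\btau):= a(\bsig,\btau) + \big(\kappa\,\bdiv\bsig,\bdiv\btau\big)$ for the bilinear form on the left-hand side of \eqref{mvf} and $\ell(\btau):= -\big(\kappa\,\bF,\bdiv\btau\big)$ for the linear functional on the right. Boundedness of $A$ on $X\times X$ and of $\ell$ on $X$ follows immediately from the Cauchy--Schwarz inequality together with $\kappa\le\kappa_+$ and $|\inner*{\tr\btau,1}|\le |\Omega|^{1/2}\norm{\btau}_{0,\Omega}$; in particular $|\ell(\btau)|\le \kappa_+\norm{\bF}_{0,\Omega}\norm{\btau}_{H(\bdiv,\Omega)}$. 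The whole point, then, is the coercivity of $A$ on $X$ with respect to $\norm{\cdot}_{H(\bdiv,\Omega)}$.

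Testing $A$ with $\btau$ itself and using $\kappa\ge\kappa_-$ gives $A(\btau,\btau)\ \ge\ \tfrac12\norm{\btau^\tD}_{0,\Omega}^2 + \theta\,\inner*{\tr\btau,1}^2 + \kappa_-\norm{\bdiv\btau}_{0,\Omega}^2$, so coercivity reduces to the Korn-type inequality
\[
\norm{\btau}_{0,\Omega}^2 \ \lesssim\ \norm{\btau^\tD}_{0,\Omega}^2 + \norm{\bdiv\btau}_{0,\Omega}^2 + \theta\,\inner*{\tr\btau,1}^2 \qquad \forall\,\btau\in X,
\]
with a constant depending only on $\Omega$. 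Since $\btau=\btau^\tD+\tfrac1d(\tr\btau)\bI$ is an orthogonal decomposition in $L^2(\Omega,\bbM)$, it suffices to bound $\norm{\tr\btau}_{0,\Omega}$. Put $q:=\tr\btau$ and let $q_0:=q$ if $\theta=0$, and $q_0:=q-|\Omega|^{-1}\inner*{q,1}$ (the zero-mean part of $q$) if $\theta=1$. In either case there is $\bv\in H^1(\Omega,\R^d)$ with $\sdiv\bv=q_0$, $\bv|_{\Gamma_D}=\mathbf 0$ and $\norm{\bv}_{1,\Omega}\lesssim\norm{q_0}_{0,\Omega}$ — the classical bounded right inverse of the divergence, which is available because $\Gamma_D$ has positive surface measure when $\theta=0$, and because $q_0$ has zero mean over $\Omega$ when $\theta=1$ and $\Gamma_D=\Gamma$. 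Using $\btau=\btau^{\mt}$, the pointwise identity $\btau:\bnabla\bv = \btau^\tD:\bnabla\bv + \tfrac1d q\,\sdiv\bv$, and the Green formula from the notational preliminaries (whose boundary term vanishes since $\bv|_{\Gamma_D}=\mathbf 0$ and $\btau\bn|_{\Gamma_N}=\mathbf 0$ for $\btau\in X$), one obtains
\[
\tfrac1d\,\inner*{q,\sdiv\bv} = \inner*{\btau,\beps(\bv)} - \inner*{\btau^\tD,\bnabla\bv} = -\inner*{\bdiv\btau,\bv} - \inner*{\btau^\tD,\bnabla\bv}.
\]
Because $\sdiv\bv=q_0$ and $\inner*{q,q_0}=\norm{q_0}_{0,\Omega}^2$, the left-hand side equals $\tfrac1d\norm{q_0}_{0,\Omega}^2$, so the Cauchy--Schwarz inequality together with $\norm{\bv}_{1,\Omega}\lesssim\norm{q_0}_{0,\Omega}$ yields $\norm{q_0}_{0,\Omega}\lesssim\norm{\bdiv\btau}_{0,\Omega}+\norm{\btau^\tD}_{0,\Omega}$; adding back $\norm{q-q_0}_{0,\Omega}^2 = |\Omega|^{-1}\inner*{q,1}^2$ (which vanishes when $\theta=0$) gives the claimed Korn-type bound.

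Combining the two estimates gives $A(\btau,\btau)\gtrsim\min\{1,\kappa_-\}\,\norm{\btau}_{H(\bdiv,\Omega)}^2$ for all $\btau\in X$, with hidden constant depending only on $\Omega$, so $A$ is $X$-coercive. The Lax--Milgram lemma then provides a unique $\bsig\in X$ solving \eqref{mvf}, and testing with $\btau=\bsig$ together with the bound on $\ell$ gives $\norm{\bsig}_{H(\bdiv,\Omega)}\lesssim \kappa_+\min\{1,\kappa_-\}^{-1}\norm{\bF}_{0,\Omega}$, that is, the asserted estimate with $C$ depending on $\Omega$ and $\kappa$. The only genuine obstacle is the Korn-type inequality above — more precisely, securing the bounded right inverse of $\sdiv$ with the prescribed vanishing trace on $\Gamma_D$ and correctly handling the mean-value compatibility condition in the pure-Dirichlet case $\Gamma_D=\Gamma$; everything else is a routine application of Lax--Milgram.
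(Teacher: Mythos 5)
Your proof is correct and its skeleton (boundedness plus coercivity of $a(\cdot,\cdot)+(\kappa\,\bdiv\cdot,\bdiv\cdot)$ on $X$, then Lax--Milgram) coincides with the paper's. The difference lies in how the key Poincar\'e--Friedrichs/Korn-type inequality is obtained: the paper simply cites two ready-made inequalities, namely $\norm{\btau^\tD}_{0,\Omega}^2+\norm{\bdiv\btau}_{0,\Omega}^2\gtrsim\norm{\btau}_{0,\Omega}^2$ on the trace-free subspace of $H(\bdiv,\Omega,\bbS)$ (Boffi--Brezzi--Fortin) and its analogue on $H_N(\bdiv,\Omega,\bbS)$ (Gatica et al.), and in the pure-Dirichlet case glues them to the term $\theta\inner*{\tr\btau,1}^2$ via the orthogonal splitting $\btau=\btau_0+\frac{1}{d|\Omega|}\inner*{\tr\btau,1}\bI$; you instead re-derive the inequality from scratch through the bounded right inverse of the divergence with vanishing trace on $\Gamma_D$ (i.e.\ the inf-sup condition the paper itself invokes later, in the pressure post-processing), which handles both boundary configurations in one uniform argument and makes the proof self-contained at the cost of importing that surjectivity result. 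Your computation of the identity $\tfrac1d\inner*{q,\sdiv\bv}=-\inner*{\bdiv\btau,\bv}-\inner*{\btau^\tD,\bnabla\bv}$ and the vanishing of the boundary term for $\btau\in X$, $\bv|_{\Gamma_D}=\mathbf 0$ are sound. One small slip in the justification: in the case $\theta=0$ the right inverse of $\sdiv$ onto all of $L^2(\Omega)$ (no mean-value constraint) with $\bv|_{\Gamma_D}=\mathbf 0$ is available because $\Gamma_N=\Gamma\setminus\Gamma_D$ has positive surface measure, not because $\Gamma_D$ does — if $\bv$ vanished on the whole boundary then $\int_\Omega\sdiv\bv=0$ would force the compatibility condition you only want in the case $\Gamma_D=\Gamma$. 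Since $\theta=0$ is by definition the case $\Gamma_D\neq\Gamma$, the fact you invoke is true and the argument stands; only the stated reason should be corrected.
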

\begin{proof}
Let us first notice that, by virtue of 
\begin{equation*}
	\norm*{\btau}^2_{0,\Omega} = \norm*{\btau^\tD}^2_{0,\Omega} + \tfrac{1}{d} \norm*{\tr \btau}^2_{0,\Omega}\quad \text{and} \quad \norm*{\tr \btau}^2_{0,\Omega} \leq d \, \norm*{\btau}^2_{0,\Omega},
\end{equation*}
the bilinear form defining the variational problem \eqref{mvf} is bounded:
\[
 \left| a(\bsig, \btau) + \big(\kappa \bdiv \bsig, \bdiv\btau \big)\right| \leq \max\{\tfrac{1}{2} + \theta d|\Omega|, \kappa_+\} \norm{\bsig}_{H(\bdiv,\Omega)} \, \norm{\btau}_{H(\bdiv,\Omega)}
\quad \forall \bsig, \btau \in X.
\]
Moreover, as a consequence  of the Poincaré--Friedrichs inequalities  (see, e.g., \cite[Proposition 9.1.1]{BoffiBrezziFortinBook})
\begin{equation}\label{ellip}
\norm*{\btau^\tD}^2_{0,\Omega} +\norm*{\bdiv \btau}_{0,\Omega}^2 \geq \alpha \norm*{\btau}^2_{0,\Omega},\quad \forall \btau \in H(\bdiv, \Omega, \bbS), \quad \inner*{\tr \btau , 1} = 0,
\end{equation}
and
(see \cite[Lemma 2.4]{GabrielAntonio})
\begin{equation}\label{ellipM}
\norm*{\btau^\tD}^2_{0,\Omega} +\norm*{\bdiv \btau}_{0,\Omega}^2 \geq \alpha \norm*{\btau}^2_{H(\bdiv,\Omega)},\quad \forall \btau \in H_N(\bdiv, \Omega, \bbS),	
\end{equation}
the bilinear form is also coercive on $X$ and the well-posedness of \eqref{mvf} is a consequence of  Lax--Milgram Lemma. Indeed, if $\Gamma_N$ has positive surface measure (which corresponds to $\theta = 0$) the coercivity of the bilinear form follows directly from  \eqref{ellipM}.  In the case $\Gamma_D = \Gamma$ (and $\theta = 1$), we can take advantage of the  $L^2(\Omega,\bbM)$-orthogonal decomposition $\btau = \btau_0 + \frac{1}{d|\Omega|} \inner*{\tr\btau, 1} \bI$ and the properties   $\btau^\tD = (\btau_0)^\tD$, $\bdiv \btau_0 = \bdiv \btau$ and $\inner*{\tr \btau_0 , 1} = 0$ to deduce the coercivity from \eqref{ellip} as follows: 
	\begin{align}\label{3.7}
	\begin{split}
	\norm*{\btau}^2_{H(\bdiv,\Omega)} &= \norm*{\btau_0}^2_{0,\Omega} + \frac{1}{d|\Omega|} \inner*{\tr\btau, 1}^2 + \norm*{\bdiv \btau }^2_{0,\Omega} \leq \frac{1}{\alpha} \Big( 
	\norm*{(\btau_0)^\tD}^2_{0,\Omega} +  \norm*{\bdiv \btau_0 }^2_{0,\Omega}\Big) 
	\\
	&\qquad  + \frac{1}{d|\Omega|} \inner*{\tr\btau, 1}^2 + \norm*{\bdiv \btau }^2_{0,\Omega}
	\leq  \beta \left( a(\btau, \btau) + \norm*{\kappa^{\frac{1}{2}}\bdiv \btau }^2_{0,\Omega}\right),
	\end{split}
	\end{align}
	for all $\btau \in H(\bdiv, \Omega, \bbM)$, with $\beta = \max\left\{(1 + \frac{1}{\alpha})\frac{1}{\kappa_-}, \frac{2}{\alpha}, \frac{1}{d|\Omega|} \right\}$.
\end{proof}

\begin{remark}
	Once the stress tensor $\bsig$ is known, the remaining variables can be recovered from  
	\begin{equation}\label{eq:post-cont}
		\bu = \frac{\kappa}{\mu}\big(\bdiv \bsig + \bF \big) \quad \text{and} \quad  p = - \tfrac{1}{d} \tr \bsig.
	\end{equation}
	Moreover, we point out that testing \eqref{mvf} with $\btau = \phi \bI$, with $\phi:\Omega \to \R$ smooth and compactly supported in $\Omega$ we readily deduce the incompressibility condition $\sdiv \bu = 0$ in $\Omega$. 
\end{remark}

\section{Auxiliary results concerning discretization}\label{sec:prelim}

From now on, we assume that there exists a polygonal/polyhedral disjoint partition $\big\{\Omega_j,\ j= 1,\ldots,J\big\}$ of  $\bar \Omega$  such that $\kappa|_{\Omega_j}:= \kappa_j$ {constant},  for all $j=1,\ldots,J$.

We consider  a sequence $\{\mathcal{T}_h\}_h$ of shape regular meshes that subdivide the domain $\bar \Omega$ into  triangles/tetrahedra $K$ of diameter $h_K$. The parameter $h:= \max_{K\in \cT_h} \{h_K\}$ represents the mesh size of $\cT_h$.  We assume that $\mathcal{T}_h$ is aligned with the partition $\bar\Omega = \cup_{j= 1}^J \bar{\Omega}_j$ and that $\cT_h(\Omega_j) := \set*{K\in \cT_h;\ K\subset \Omega_j }$ is a shape regular mesh of $\bar\Omega_j$ for all $j=1,\cdots, J$ and  all $h$.

For all $s\geq 0$, we consider the broken Sobolev space    
\[
  H^s(\cup_j\Omega_j) := \set*{ v\in L^2(\Omega);\ v|_{\Omega_j}\in H^s(\Omega_j),\ \forall j =1,\ldots,J },
\]
corresponding to the partition $\bar\Omega = \cup_{j= 1}^J \bar{\Omega}_j$. 
Its vectorial and tensorial versions are denoted $H^s(\cup_j\Omega_j,\R^d)$ and $H^s(\cup_j\Omega_j,\bbM)$, respectively. Likewise, the broken Sobolev space with respect to the subdivision of $\bar \Omega$ into $\cT_h$ is  
\[
 H^s(\cT_h,E):=
 \set*{\bv \in L^2(\Omega, E): \quad \bv|_K\in H^s(K, E)\quad \forall K\in \cT_h},\quad \text{for $E \in \set{ \R, \R^d, \bbM}$}. 
\]
For each $\bv:=\set{\bv_K}\in H^s(\cT_h,\R^d)$ and $\btau:= \set{\btau_K}\in H^s(\cT_h,\bbM)$ the components $\bv_K$ and $\btau_K$  represent the restrictions $\bv|_K$ and $\btau|_K$. When no confusion arises, the restrictions of these functions will be written without any subscript.

Hereafter, given an integer $m\geq 0$ and a domain $D\subset \mathbb{R}^d$, $\cP_m(D)$ denotes the space of polynomials of degree at most $m$ on $D$. We introduce the space   
\[
 \cP_m(\cT_h) := 
 \set*{ v\in L^2(\Omega): \ v|_K \in \cP_m(K),\ \forall K\in \cT_h },
 \]
 of piecewise polynomial functions relatively to $\cT_h$. We also consider the space $\cP_m(\cT_h,E)$ of functions with values in $E$ and entries in $\cP_m(\cT_h)$, where $E$ is either $\R^d$, $\bbM$ or $\bbS$. 
 
Let us introduce now notations related to DG approximations of $H(\text{div})$-type spaces. We say that a closed subset $F\subset \overline{\Omega}$ is an interior edge/face if $F$ has a positive $(d-1)$-dimensional measure and if there are distinct elements $K$ and $K'$ such that $F =\bar K\cap \bar K'$. A closed subset $F\subset \overline{\Omega}$ is a boundary edge/face if there exists $K\in \cT_h$ such that $F$ is an edge/face of $K$ and $F =  \bar K\cap \Gamma$. We consider the set $\cF_h^0$ of interior edges/faces, the set $\cF_h^\partial$ of boundary edges/faces and let $\cF(K):= \set{F\in \cF_h;\quad F\subset \partial K}$ be the set of edges/faces composing the boundary of $K$.

We assume that the boundary mesh $\cF_h^\partial$ is compatible with the partition $\partial \Omega = \Gamma_D \cup \Gamma_N$ in the sense that, if 
$
\cF_h^D = \set*{F\in \cF_h^\partial:\, F\subset \Gamma_D}$  and $\cF_h^N = \set*{F\in \cF_h^\partial:\, F\subset \Gamma_N},
$
then $\Gamma_D = \cup_{F\in \cF_h^D} F$ and $\Gamma_N = \cup_{F\in \cF_h^N} F$. We denote   
\[
  \cF_h := \cF_h^0\cup \cF_h^\partial\qquad \text{and} \qquad \cF^*_h:= \cF_h^{0} \cup \cF_h^{N},
\]
and for all $K\in \cT_h$. Obviously, in the case $\Gamma_D = \Gamma$ we have that $\cF^*_h = \cF^0_h$. Finally, we introduce the set $\cF(K):= \set{F\in \cF_h;\quad F\subset \partial K}$ of edges/faces composing the boundary of $K$.

 We will need the space given on the skeletons of the triangulations $\cT_h$  by $L^2(\cF^*_h):= \bigoplus_{F\in \cF^*_h} L^2(F)$. Its vector valued version is denoted $L^2(\cF^*_h,\R^d)$. Here again, the components $\bv_F$ of $\bv := \set{\bv_F}\in L^2(\cF^*_h,\R^d)$  coincide with the restrictions $\bv|_F$.  We endow $L^2(\cF^*_h,\R^d)$ with the inner product 
\[
(\bu, \bv)_{\cF^*_h} := \sum_{F\in \cF^*_h} \int_F \bu_F\cdot \bv_F\quad \forall \bu,\bv\in L^2(\cF^*_h,\R^d),
\]
and denote the corresponding norm $\norm*{\bv}^2_{0,\cF^*_h}:= (\bv,\bv)_{\cF^*_h}$. From now on, $h_\cF\in L^2(\cF^*_h)$ is the piecewise constant function defined by $h_\cF|_F := h_F$ for all $F \in \cF^*_h$ with $h_F$ denoting the diameter of edge/face $F$. By virtue of our hypotheses on $\kappa$ and on the triangulation $\cT_h$, we may consider that $\kappa$ is an element of $\cP_0(\cT_h)$ and denote $\kappa_K:= \kappa|_K$ for all $K\in \cT_h$. Moreover, we introduce $\gamma_\cF\in L^2(\cF^*_h)$ defined by $\gamma_F := \min\{ \kappa_K^{-1}, \kappa_{K'}^{-1} \}$ if $K\cap K' = F$ and $\gamma_F := \kappa_K^{-1}$ if $F\cap K \in \cF^N_h$. 

Given  $\bv\in H^s(\cT_h,\R^d)$ and $\btau\in H^s(\cT_h,\bbM)$, with $s>\frac{1}{2}$, we define averages $\mean{\bv}\in L^2(\cF^*_h,\R^d)$ and jumps $\jump{\btau}\in L^2(\cF^*_h,\R^d)$ by
\[
 \mean{\bv}_F := (\bv_K + \bv_{K'})/2 \quad \text{and} \quad \jump{\btau}_F := 
 \btau_K \bn_K + \btau_{K'}\bn_{K'} 
 \quad \forall F \in \cF(K)\cap \cF(K'),
\]
with the conventions 
\[
 \mean{\bv}_F := \bv_K  \quad \text{and} \quad \jump{\btau}_F := 
 \btau_K \bn_K  
 \quad \forall F \in \cF(K),\,\, F\in \cF_h^\partial,
\]
where $\bn_K$ is the outward unit normal vector to $\partial K$.

For any $k\geq 1$, we 
let $\mathcal{X}^k(h) :=X + \cP_{k}(\cT_h,\bbS)$.  Given $\btau \in \cP_{k}(\cT_h,\bbS)$, we define $\bdiv_h \btau \in  L^2(\Omega,\R^d)$ by $\bdiv_h \btau|_{K} := \bdiv \btau_K$ for all $K\in \cT_h$ and endow $\mathcal{X}^k(h)$ with the norm
\begin{equation}\label{eq:triple-norm}
 \vertiii{\btau}^2 := a(\btau, \btau) + \norm*{\kappa^{\frac{1}{2}}\bdiv_h  \btau}^2_{0,\Omega} + \norm*{\gamma_\cF^{-\frac{1}{2}} h_{\cF}^{-\frac{1}{2}} \jump{\btau}}^2_{0,\cF^*_h}.
\end{equation}
Under the condition $\bdiv_h \btau \in H^s(\cT_h,\R^d)$ ($s>\frac{1}{2}$),  we also introduce
\[
  \vertiii{\btau}^2_* := \norm*{\btau}^2_{0,\Omega} + \norm*{\kappa^{\frac{1}{2}}\bdiv_h  \btau}^2_{0,\Omega} + \norm*{\gamma_\cF^{\frac{1}{2}} h_F^{\frac{1}{2}} \mean{\kappa\bdiv_h \btau}}^2_{0,\cF^*_h}.
\]

We end this section by recalling  technical results needed for the convergence analysis of problem \eqref{mvf}. We begin with the following well-known the multiplicative trace inequality , see for example \cite{DiPietroErn}.
\begin{lemma}\label{card} 
There exists a constant $C>0$ independent of $h$ such that 
\begin{equation}\label{multiplicativetrace}
		h_K^{\frac{1}{2}}\norm{v}_{0,\partial K} \leq C \big( \norm{v}_{0,K} + h_K \norm*{\nabla v}_{0,K} \big),  
	\end{equation}
	for all $v\in H^1(K)$ and all $K\in \cT_h$. 
\end{lemma}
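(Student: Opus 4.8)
The plan is to prove \eqref{multiplicativetrace} by a direct divergence--theorem argument, without passing through a reference element. Since $\cC^\infty(\overline K)$ is dense in $H^1(K)$ and the trace operator $H^1(K)\to L^2(\partial K)$ is continuous, I would first reduce to the case $v\in \cC^1(\overline K)$, the general statement following by passing to the limit. For such $v$, I would fix the incenter $\bx_K$ of $K$ and introduce the radial field $\boldsymbol{\varphi}(\bx):=\bx-\bx_K$, which satisfies $\sdiv\boldsymbol{\varphi}\equiv d$ and $\abs{\boldsymbol{\varphi}(\bx)}\le h_K$ for every $\bx\in K$. Applying the divergence theorem to $v^2\boldsymbol{\varphi}$ gives
\[
\int_{\partial K} v^2\,(\boldsymbol{\varphi}\cdot\bn_K)\;=\;\int_K\big(d\,v^2 + 2\,v\,\nabla v\cdot\boldsymbol{\varphi}\big).
\]

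Next I would observe that on each face $F\in\cF(K)$ the factor $\boldsymbol{\varphi}\cdot\bn_K$ is constant and equals the distance from $\bx_K$ to the hyperplane supporting $F$, which — precisely by the choice of the incenter — equals the inradius $\rho_K$ of $K$; hence the left-hand side is bounded below by $\rho_K\,\norm{v}_{0,\partial K}^2$. Bounding the right-hand side with the Cauchy--Schwarz inequality and $\abs{\boldsymbol{\varphi}}\le h_K$, and then invoking the shape-regularity estimate $h_K\le\sigma\,\rho_K$ with $\sigma$ independent of $K$ and of $h$, I would arrive at
\[
h_K\,\norm{v}_{0,\partial K}^2 \;\le\; \sigma\big(d\,\norm{v}_{0,K}^2 + 2\,h_K\,\norm{v}_{0,K}\,\norm{\nabla v}_{0,K}\big).
\]
Young's inequality applied to the cross term, $2\,h_K\,\norm{v}_{0,K}\,\norm{\nabla v}_{0,K}\le\norm{v}_{0,K}^2 + h_K^2\,\norm{\nabla v}_{0,K}^2$, followed by taking square roots and using $\sqrt{a+b}\le\sqrt a+\sqrt b$ for $a,b\ge 0$, then yields \eqref{multiplicativetrace} with $C=\sqrt{\sigma(d+1)}$.

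I do not expect a genuine obstacle here: the only non-elementary ingredient is the geometric bound $h_K\le\sigma\,\rho_K$, which is exactly the shape-regularity hypothesis on the mesh sequence $\{\cT_h\}_h$, and everything else is bookkeeping with Cauchy--Schwarz and Young's inequality. An equally routine alternative would be to map $K$ affinely onto the reference simplex $\widehat K$, apply the continuous trace theorem $\norm{\widehat v}_{0,\partial\widehat K}\lesssim\norm{\widehat v}_{1,\widehat K}$ there, and track the powers of $h_K$ produced by the change of variables and its Jacobian — shape regularity again being what keeps the resulting constant uniform in $h$.
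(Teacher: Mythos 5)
Your proof is correct, and it is essentially the canonical argument: the paper itself gives no proof of this lemma, only a citation to Di Pietro--Ern, and the proof there is exactly your divergence-theorem computation with the radial field $\bx-\bx_K$ centered at the incenter, using $\boldsymbol{\varphi}\cdot\bn_K=\rho_K$ on each face and shape regularity $h_K\le\sigma\rho_K$. All the steps (density of $\cC^1(\overline K)$ in $H^1(K)$, Cauchy--Schwarz, Young) check out, so there is nothing to add.
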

It is easy to deduce from \eqref{multiplicativetrace} the following discrete trace inequality. 
\begin{lemma}\label{TraceDG}
There exists a constant $C_{\textup{tr}}>0$ independent of $h$ and $\kappa$ such that 	\begin{equation}\label{discTrace}
  \norm*{\gamma^{\frac{1}{2}}_\cF h^{\frac{1}{2}}_{\cF}\mean{\kappa \bv}}_{0,\cF^*_h}\leq C_{\textup{tr}} \norm*{\kappa^{\frac{1}{2}} \bv}_{0,\Omega}\quad \forall  v\in \cP_k(\cT_h, \R^d). 
 \end{equation}
\end{lemma}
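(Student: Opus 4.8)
The plan is to reduce the face--weighted norm on the left--hand side to a sum of element $L^2$--norms by means of a discrete trace inequality, taking care that the weights $\gamma_\cF$ and $\kappa$ combine into a constant that does not see the permeability.

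First I would treat a single face. If $F = \bar K\cap \bar K'\in \cF_h^0$, convexity of $t\mapsto t^2$ gives
\[
  \norm*{\mean{\kappa\bv}}_{0,F}^2 = \tfrac14\norm*{\kappa_K\bv_K + \kappa_{K'}\bv_{K'}}_{0,F}^2 \le \tfrac12\big(\kappa_K^2\norm*{\bv_K}_{0,F}^2 + \kappa_{K'}^2\norm*{\bv_{K'}}_{0,F}^2\big),
\]
while for $F\in \cF_h^N$ one has $\norm*{\mean{\kappa\bv}}_{0,F}^2 = \kappa_K^2\norm*{\bv_K}_{0,F}^2$. Multiplying by $h_F\gamma_F$ and invoking the definition $\gamma_F = \min\{\kappa_K^{-1},\kappa_{K'}^{-1}\}$ (resp.\ $\gamma_F = \kappa_K^{-1}$) yields $h_F\gamma_F\kappa_K^2\le h_F\kappa_K\le h_K\kappa_K$ and likewise $h_F\gamma_F\kappa_{K'}^2\le h_{K'}\kappa_{K'}$, so the contribution of $F$ is controlled by $\kappa_K h_K\norm*{\bv_K}_{0,F}^2 + \kappa_{K'}h_{K'}\norm*{\bv_{K'}}_{0,F}^2$ (only the first summand on a boundary face).

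Summing over $F\in \cF^*_h$ and reorganizing the sum element by element (each face being shared by at most two elements, and $\sum_{F\subset\partial K}\norm*{\bv_K}_{0,F}^2 = \norm*{\bv_K}_{0,\partial K}^2$), I obtain
\[
  \norm*{\gamma_\cF^{\frac12}h_\cF^{\frac12}\mean{\kappa\bv}}_{0,\cF^*_h}^2 \;\lesssim\; \sum_{K\in\cT_h}\kappa_K\, h_K\,\norm*{\bv_K}_{0,\partial K}^2 .
\]
It then remains to apply, componentwise, the discrete trace inequality $\norm*{w}_{0,\partial K}\le C\,h_K^{-1/2}\norm*{w}_{0,K}$ for $w\in\cP_k(K)$, which is obtained from the multiplicative trace inequality \eqref{multiplicativetrace} after absorbing the term $h_K\norm*{\nabla w}_{0,K}$ with the standard inverse estimate $\norm*{\nabla w}_{0,K}\le C h_K^{-1}\norm*{w}_{0,K}$ valid on shape--regular simplices. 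This gives $\kappa_K h_K\norm*{\bv_K}_{0,\partial K}^2\le C\kappa_K\norm*{\bv_K}_{0,K}^2$, and summing over $K\in\cT_h$ produces $C\norm*{\kappa^{\frac12}\bv}_{0,\Omega}^2$, so that $C_{\textup{tr}}$ depends only on the trace/inverse constants and on $d$, as claimed.

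The computation is essentially routine; the one point that deserves attention --- and the very reason $\gamma_F$ is defined through the \emph{minimum} of the neighbouring values $\kappa_K^{-1}$ --- is the step $h_F\gamma_F\kappa_K^2\le h_F\kappa_K$, which is what lets the estimate cross a face with a jump in $\kappa$ without picking up a factor $\kappa_+/\kappa_-$.
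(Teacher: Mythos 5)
Your proof is correct and follows essentially the same route as the paper's: both reduce the face sum to element boundary terms by exploiting $\gamma_F\kappa_K\le 1$ (the reason for the minimum in the definition of $\gamma_\cF$) together with $h_F\le h_K$, and then conclude with the polynomial discrete trace inequality $h_K^{1/2}\norm{w}_{0,\partial K}\lesssim\norm{w}_{0,K}$ derived from \eqref{multiplicativetrace}. Your write-up merely makes explicit two steps the paper leaves implicit (the convexity bound on the average and the inverse-estimate absorption of the gradient term), so nothing further is needed.
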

\begin{proof}
	As a consequence of \eqref{multiplicativetrace}, there exists $C_{\textup{tr}}>0$ independent of $h$ such that (see for example \cite{DiPietroErn})
	\begin{equation}\label{Tr0}
		h_K^{\frac{1}{2}}\norm{v}_{0,\partial K} \leq C_{\textup{tr}}  \norm{v}_{0,K}\quad  \forall v \in \cP_k(K). 
	\end{equation}
	By definition of $\gamma_\cF$, for any $\bv \in \cP_k(\cT_h, \R^d)$,
	\begin{align*}
		\norm*{\gamma^{\frac{1}{2}}_\cF h^{\frac{1}{2}}_{\cF}\mean{\kappa \bv}}^2_{0,\cF^*_h} = \sum_{F\in \cF^*_h} h_F \norm*{ \gamma^\frac{1}{2}_{\cF}\mean{\kappa \bv}_F }^2_{0,F} \leq \sum_{F\in \cF^*_h} h_F \norm*{ \mean{\kappa^{\frac{1}{2}} \bv}_F }^2_{0,F}
		\lesssim \sum_{K\in \cT_h} h_K \norm*{ \kappa_K^{\frac{1}{2}} \bv_K }^2_{0,\partial K},
	\end{align*}
and the result follows from \eqref{Tr0}. 
\end{proof}
The Scott--Zhang like quasi-interpolation operator $\pi_h:\, L^2(\Omega) \to \cP_{k}(\cT_h)\cap H^1(\Omega)$, obtained in \cite{ern} by applying an $L^2$-orthogonal projection onto $\cP_{k}(\cT_h)$ followed by an averaging procedure with range in the space of continuous and piecewise $\cP_{k}$ functions,   will be especially useful in the forthcoming analysis. We recall in the next lemma the local approximation properties given \cite[Theorem 5.2]{ern}. Let us first introduce some notations. For any $K\in \cT_h$, we introduce the subset of $\cT_h$ defined by  $\cT_h^K := \set*{K'\in\cT_h:\, K\cap K' \neq \emptyset}$ and let $D_K = \text{interior}\left(\cup_{K'\in \cT_h^K} K' \right)$.   

\begin{lemma}\label{scott-zhang}
	The quasi-interpolation operator $\pi_h$ is invariant in the space $\cP_{k}(\cT_h)\cap H^1(\Omega)$ and there exists a constant $C>0$ independent of $h$ such that 
	\begin{equation}\label{scott0}
		|v - \pi_h v |_{m,K} \leq C h_K^{r - m} |v|_{r,D_K},
	\end{equation}
for all real numbers $0\leq r \leq k+1$, all natural numbers $0\leq m \leq [r]$, all $v\in H^r(D_K)$ and all $K\in \cT_h$. Here $[r]$ stands for the the largest integer less than or equal to $r$. 
\end{lemma}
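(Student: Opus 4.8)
The plan is to exploit the two-step construction $\pi_h = \mathcal{J}_h\circ\Pi_h$, where $\Pi_h\colon L^2(\Omega)\to\cP_k(\cT_h)$ is the element-wise $L^2$-orthogonal projection and $\mathcal{J}_h\colon\cP_k(\cT_h)\to\cP_k(\cT_h)\cap H^1(\Omega)$ is the nodal averaging operator that assigns to each Lagrange node the arithmetic mean of the values at that node of the element polynomials meeting there. Invariance is then immediate: if $v\in\cP_k(\cT_h)\cap H^1(\Omega)$ then $\Pi_h v=v$ because $v$ is already piecewise polynomial, and $\mathcal{J}_h v=v$ because the continuity of $v$ forces the neighbouring element values to coincide at every shared node, so the averaging reproduces them; hence $\pi_h v=v$.

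For the estimate, fix $K\in\cT_h$, set $w:=\Pi_h v$, and split $v-\pi_h v=(v-w)+(w-\mathcal{J}_h w)$. The first term is handled by a standard Bramble--Hilbert-plus-inverse-inequality argument on $K$: choosing an averaged Taylor polynomial $q\in\cP_k(K)$ of $v$ with $|v-q|_{m,K}\lesssim h_K^{r-m}|v|_{r,K}$ and $\|v-q\|_{0,K}\lesssim h_K^{r}|v|_{r,K}$ for $0\le m\le[r]$, and combining $\Pi_h q=q$, the $L^2$-stability of $\Pi_h$, and the inverse inequality $|w-q|_{m,K}\lesssim h_K^{-m}\|w-q\|_{0,K}$, one gets $|v-w|_{m,K}\lesssim h_K^{r-m}|v|_{r,K}\le h_K^{r-m}|v|_{r,D_K}$.

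The crux is the second term. A scaling argument on the reference element, chaining nodal differences across the patch, shows that $(w-\mathcal{J}_h w)|_K$ is controlled by the jumps of $w$,
\[
|w-\mathcal{J}_h w|_{m,K}^2 \;\lesssim\; \sum_{F} h_F^{\,1-2m}\,\|\jump{w}\|_{0,F}^2 ,
\]
the sum running over the interior faces $F$ of the patch $D_K$. To bound a single jump, pick one polynomial $p\in\cP_k(D_K)$ realizing the Bramble--Hilbert estimate $\|v-p\|_{0,D_K}\lesssim h_K^{r}|v|_{r,D_K}$ on the patch (which, by shape regularity, is uniformly Lipschitz and contains a uniformly bounded number of elements). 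Since $\jump{p}_F=0$, for $F=\partial K'\cap\partial K''\subset D_K$ we have $\|\jump{w}\|_{0,F}\le\|(w-p)|_{K'}\|_{0,F}+\|(w-p)|_{K''}\|_{0,F}$, and the discrete trace inequality \eqref{Tr0} together with the first-term bound applied on $K'$ and $K''$ gives $h_F^{1/2}\|\jump{w}\|_{0,F}\lesssim h_K^{r}|v|_{r,D_K}$. Inserting this, using $h_F\simeq h_K$ for $F\subset D_K$ and summing over the uniformly bounded number of faces, one obtains $|w-\mathcal{J}_h w|_{m,K}\lesssim h_K^{r-m}|v|_{r,D_K}$; the triangle inequality then yields \eqref{scott0}.

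The hard part will be precisely the control of the averaging step: proving the displayed inequality for $|w-\mathcal{J}_h w|_{m,K}$ with a constant independent of $h$ (a reference-element scaling combined with equivalence of norms on finite-dimensional spaces), and then threading the jumps of $\Pi_h v$ back to a single best polynomial on $D_K$ for the full, possibly non-integer, range $0\le r\le k+1$, which rests on the Bramble--Hilbert lemma holding on $D_K$ with uniform constants. All the details are carried out in \cite[Theorem 5.2]{ern}.
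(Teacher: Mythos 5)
The paper does not prove this lemma at all: it simply recalls it from \cite[Theorem 5.2]{ern}, whose operator is exactly the composition (element-wise $L^2$ projection followed by nodal averaging) that you describe. Your sketch correctly reconstructs the argument of that reference — invariance from both factors reproducing $\cP_k(\cT_h)\cap H^1(\Omega)$, the projection error via Bramble--Hilbert plus inverse inequalities, and the averaging error via jumps of $\Pi_h v$ compared against a single patch polynomial — and you appropriately defer the delicate uniform-constant and fractional-$r$ details to the same citation, so this is consistent with the paper's treatment.
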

We point out that, as a consequence of \eqref{scott0} and the triangle inequality, it holds
\begin{equation}\label{scott00}
		|\pi_h v|_{m,K} \lesssim |v|_{m,D_K},
	\end{equation}
	for all natural number $0\leq m \leq k+1$, , all $v\in H^m(D_K)$ and all $K\in \cT_h$. 	 Moreover, it is straightforward to deduce from \eqref{scott0} and the multiplicative trace inequality \eqref{multiplicativetrace} that 
	\begin{equation}\label{scott0b}
		h_K^{\frac{1}{2}}\norm{v - \pi_h v}_{0,\partial K} + h_K^{3/2}\norm{\nabla ( v - \pi_h v )}_{0,\partial K} \lesssim h_K^r |v|_{r,D_K},
	\end{equation}
	for all $2 \leq r \leq k+1$ ($k\geq 1$), all $v\in H^r(D_K)$ and all $K\in \cT_h$. 

We can deduce from \eqref{scott00} a global stability property for $\pi_h$ on $H^m(\cup_j\Omega_j)$,  $0\leq m \leq k+1$, by taking advantage of the fact that the cardinal $\#(\cT_h^K)$ of $\cT_h^K$ is uniformly bounded for all $K\in \cT_h$ and all $h$, as a consequence of the shape-regularity of the mesh sequence $\{\cT_h\}$. Indeed, given $v\in H^m(\cup_j\Omega_j)$, we let $ \cT_h^K(\Omega_j) := \set*{K'\in\cT_h(\Omega_j):\, K\cap K' \neq \emptyset}$ be the subset of elements in $\cT_h^K$ that are contained in $\bar\Omega_j$ and denote $D_K^j := \text{interior}\left(\cup_{K'\in \cT_h^K(\Omega_j)} K' \right)$. It follows from \eqref{scott00} that 
\[
		\norm{\pi_h v}_{m,K} \lesssim \norm{v}_{m,D_K^j},\quad  \forall K\in \cT_h^K(\Omega_j).
\]
Summing over $K\in \cT_h^K(\Omega_j)$  and using that $\#(\cT_h^{K}(\Omega_j)) \leq \#(\cT_h^{K}) \leq c$ for all $1\leq j \leq J$ and all $h$, we deduce that 
\begin{equation}\label{scott0a}
		\norm{\pi_h v}_{m,\Omega_j} \lesssim  \norm{v}_{m,\Omega_j},\quad  \forall j=1,\ldots, J.
\end{equation}
 
 In what follows, we use the same notation for the tensorial version $\pi_h:\, L^{2}(\Omega,\bbS)\to \cP_{k}(\cT_h,\bbS)\cap H^1(\Omega,\bbS)$ of the quasi-interpolation operator, which is obtained by applying the scalar operator componentwise. It is important to notice that such an operator preserves the  symmetry of tensors. As a consequence of \eqref{scott0} and \eqref{scott0b} we have the following result.

\begin{lemma}\label{maintool}
There exists a constant $C>0$ independent of $h$ and $\kappa$ such that 
\begin{equation}\label{tool}
	\vertiii{\btau - \pi_h \btau}_* \leq 
	C  h^{\min\{r,k\}} {\left( \sum_{j=1}^J \kappa_j\norm*{\btau}^2_{r+1,\Omega_j}\right)^2},
\end{equation}
	for all $\btau \in H(\bdiv, \Omega, \bbS)\cap H^{r+1}(\cup_j\Omega_j,\bbM)$, $r\geq 1$,  {provided $h\leq \sqrt \kappa_+$}.
\end{lemma}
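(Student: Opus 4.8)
The plan is to bound the three summands of $\vertiii{\btau-\pi_h\btau}_*^2$ separately---the unweighted $L^2$-norm, the $\kappa$-weighted broken-divergence norm, and the facet term---and to carry out each summation over elements one subdomain $\Omega_j$ at a time, using the local quasi-interpolation bound \eqref{scott0}, the multiplicative trace inequality \eqref{multiplicativetrace}, and the definition of $\gamma_\cF$. Set $s:=\min\{r+1,k+1\}=\min\{r,k\}+1$; recall $r\ge1$ and $k\ge1$, so $s\ge2$. Since $\btau|_K\in H^{r+1}(K,\bbM)$ and $\pi_h\btau\in\cP_{k}(\cT_h,\bbS)\cap H^1(\Omega,\bbS)$, the tensor $\btau-\pi_h\btau$ lies in $H(\bdiv,\Omega,\bbS)$, its broken divergence coincides elementwise with $\bdiv(\btau-\pi_h\btau)$ and belongs to $H^r(\cT_h,\R^d)\subset H^1(\cT_h,\R^d)$; in particular $\vertiii{\,\cdot\,}_*$ is well defined on $\btau-\pi_h\btau$ and $\norm{\bdiv_h(\btau-\pi_h\btau)}_{0,K}\lesssim\abs{\btau-\pi_h\btau}_{1,K}$.

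\emph{The $L^2$- and divergence terms.} For the first summand, \eqref{scott0} with $m=0$ gives $\norm{\btau-\pi_h\btau}_{0,K}\lesssim h_K^{s}\abs{\btau}_{s,D_K}$; using $h_K\le h$ and summing over the elements of each $\Omega_j$---with the uniform bound on $\#(\cT_h^K)$ provided by shape-regularity, exactly as in the derivation of \eqref{scott0a}, so that only the piecewise regularity $\btau|_{\Omega_j}\in H^{r+1}(\Omega_j)$ is invoked---yields $\norm{\btau-\pi_h\btau}_{0,\Omega}^2\lesssim h^{2s}\sum_{j=1}^J\norm{\btau}_{r+1,\Omega_j}^2=h^{2\min\{r,k\}}\sum_{j=1}^J h^2\norm{\btau}_{r+1,\Omega_j}^2$. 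The surplus factor $h^2$ is then absorbed into the weights $\kappa_j$ by the smallness assumption on the mesh (which guarantees $h^2\lesssim\kappa_j$ for every $j$). For the second summand, $\norm{\kappa^{1/2}\bdiv_h(\btau-\pi_h\btau)}_{0,\Omega}^2=\sum_{j=1}^J\kappa_j\sum_{K\subset\Omega_j}\norm{\bdiv(\btau-\pi_h\btau)}_{0,K}^2\lesssim\sum_{j=1}^J\kappa_j\sum_{K\subset\Omega_j}\abs{\btau-\pi_h\btau}_{1,K}^2$, and \eqref{scott0} with $m=1$ (so the exponent is $s-1=\min\{r,k\}$) together with the same finite-overlap argument gives the bound $h^{2\min\{r,k\}}\sum_{j=1}^J\kappa_j\norm{\btau}_{r+1,\Omega_j}^2$, this time without any mesh restriction.

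\emph{The facet term.} Put $\bw:=\bdiv_h(\btau-\pi_h\btau)$. By the definition of $\gamma_\cF$ we have $\gamma_F\kappa_K\le1$ whenever $F\subset\bar K$, hence---distinguishing the interior-facet and Neumann-facet conventions for $\mean{\,\cdot\,}$---one gets $\gamma_F\norm{\mean{\kappa\bw}_F}_{0,F}^2\lesssim\sum_{K:\,F\subset\bar K}\kappa_K\norm{\bw_K}_{0,F}^2$ for all $F\in\cF^*_h$. Using $h_F\le h_K$ and reorganizing the facet sum elementwise, with $\sum_{F\in\cF(K)}\norm{\bw_K}_{0,F}^2=\norm{\bw_K}_{0,\partial K}^2$, the facet term is bounded by $\sum_{K\in\cT_h}\kappa_K\,h_K\,\norm{\bdiv(\btau-\pi_h\btau)}_{0,\partial K}^2$. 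Applying \eqref{multiplicativetrace} componentwise to $\bdiv(\btau-\pi_h\btau)\in H^1(K,\R^d)$ gives $h_K\norm{\bdiv(\btau-\pi_h\btau)}_{0,\partial K}^2\lesssim\abs{\btau-\pi_h\btau}_{1,K}^2+h_K^2\abs{\btau-\pi_h\btau}_{2,K}^2$, and \eqref{scott0} with $m=1$ and $m=2$ (admissible since $s\ge2$) makes both terms $\lesssim h_K^{2\min\{r,k\}}\abs{\btau}_{s,D_K}^2$. Summing over the elements of each $\Omega_j$ with finite overlap closes the bound; adding the three estimates and taking square roots yields \eqref{tool}.

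\emph{Main obstacle.} The two delicate points are: (i) the extra power of $h$ produced by the $L^2$-approximation has to be traded against the possibly small coefficients $\kappa_j$, which is exactly why the mesh-smallness hypothesis appears and cannot be dispensed with if the constant is to stay independent of $\kappa$; and (ii) the quasi-interpolant $\pi_h$ is not local, does not commute with $\bdiv$, and returns globally continuous functions, whereas $\btau$ is only piecewise $H^{r+1}$ across the interfaces of the partition $\{\Omega_j\}_{j=1}^J$. This forces all summations to be organized subdomain by subdomain and the use of the subdomain-localized stability/approximation properties established around \eqref{scott0a}, valid precisely because $\cT_h$ is aligned with the permeability partition; everything else is the routine bookkeeping indicated above.
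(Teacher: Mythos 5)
Your argument is correct and follows essentially the same route as the paper's proof: the same term-by-term splitting of $\vertiii{\cdot}_*$, the same use of \eqref{scott0} for the $L^2$- and divergence contributions, and the same subdomain-by-subdomain summation exploiting the finite overlap of the patches $D_K$; your treatment of the facet term via \eqref{multiplicativetrace} applied to $\bdiv(\btau-\pi_h\btau)$ is just an inlined version of the paper's appeal to \eqref{scott0b}. You additionally make explicit how the surplus factor $h^2$ from the $L^2$-term is absorbed into the weights $\kappa_j$, which is precisely the role of the mesh-smallness hypothesis that the paper leaves implicit.
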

\begin{proof}
Given $K\in \cT_h(\Omega_j)$, $1\leq j \leq J$, we obtain from  \eqref{scott0} that  
\begin{equation}\label{esti1}
\norm*{\btau - \pi_h \btau}^2_{0,K}\lesssim h_K^{2\min\{r+1,k+1\}}  \norm*{\btau}^2_{r+1,D^j_K},
\end{equation}
 as well as  
 \begin{align}\label{esti2}
 \begin{split}
 \norm*{\kappa^{\frac{1}{2}}\bdiv\big( \btau - \pi_h \btau \big)}^2_{0,K} &\lesssim 
\norm*{\kappa^{\frac{1}{2}}\bnabla\big( \btau - \pi_h \btau \big)}^2_{0,K}
\lesssim \kappa_j h_K^{2\min\{r,k\}}  \norm*{\btau}^2_{r+1,D^j_K}.
 \end{split}
\end{align} 
For the last term in the left-hand side of \eqref{tool}, we notice that 
\begin{align*}
  \norm*{\gamma_\cF^{\frac{1}{2}} h_\cF^{\frac{1}{2}} \mean{\kappa \bdiv\big(\btau - \pi_h \btau \big)}}^2_{0,\cF^*_h}   
  \lesssim    
  \sum_{j=1}^J \sum_{K\in \cT_h(\Omega_j)}  h_K\norm*{ \kappa^{\frac{1}{2}} \bnabla \Big( \btau -    \pi_h\btau\Big) }^2_{0,\partial K}.
\end{align*}
and \eqref{scott0b} yields  
 \begin{equation}\label{esti3}
 h_K\norm*{ \kappa^{\frac{1}{2}}\bnabla \Big( \btau -    \pi_h\btau \Big) }^2_{0,\partial K}    
 \lesssim \kappa_j h_K^{2\min\{r,k\}} \norm*{\btau}^2_{r+1,D_K^j}, \quad \forall K\in \cT_h(\Omega_j).
\end{equation}
Summing \eqref{esti1}, \eqref{esti2} and \eqref{esti3} over $K\in \cT_h(\Omega_j)$  and then over $j=1,\ldots, J$ and invoking the shape-regularity of the mesh sequence give the result. 
\end{proof}
  
Given $s>1/2$ and $m\geq 1$, the tensorial version of the canonical interpolation operator $\Pi^{\texttt{BDM}}_h: H^s(\cup_j\Omega_j, \R^d) \cap H(\sdiv,\Omega) \to \cP_m(\cT_h,\R^d)\cap H(\sdiv,\Omega)$ associated with the Brezzi--Douglas--Marini (BDM) mixed finite element satisfies the following classical error estimate, see \cite[Proposition 2.5.4]{BoffiBrezziFortinBook},
\begin{equation}\label{asympS}
 \norm*{\bv - \Pi^{\texttt{BDM}}_h \bv}_{0,\Omega} \leq C h^{\min\{s, m+1\}}\left( \sum_{j=1}^J \norm*{\bv}^2_{s,\Omega_j}\right)^{1/2} \quad \forall \bv \in H^s(\cup_j\Omega_j, \R^d) \cap H(\sdiv,\Omega), \quad s>1/2.
\end{equation}
Moreover, we have the well-known commutativity property, 
\begin{equation*}
	\sdiv \Pi^{\texttt{BDM}}_h \bv = Q^{m-1}_h \sdiv \bv,\quad \forall \bv \in H^s(\cup_j\Omega_j, \R^d) \cap H(\sdiv,\Omega), \quad s>1/2, 
\end{equation*}
where $Q^{m-1}_h$ stands for the $L^2(\Omega)$-orthogonal projection onto $\cP_{m-1}(\cT_h)$.
Applying $\Pi^{\texttt{BDM}}_h$ row-wise to matrices we obtain $\bPi^{\texttt{BDM}}_h:\, H^s(\cup_j\Omega_j, \bbM) \cap H(\sdiv,\Omega,\bbM) \to \cP_m(\cT_h,\bbM) \cap H(\sdiv,\Omega,\bbM)$. Obviously, this tensorial version of the Brezzi-Douglas-Marini interpolation operator also satisfies 
\begin{equation}\label{asympV}
 \norm*{\btau - \bPi^{\texttt{BDM}}_h \btau}_{0,\Omega} \leq C h^{\min\{s, m+1\}} \left( \sum_{j=1}^J \norm*{\btau}^2_{s,\Omega_j}\right)^{1/2} \quad \forall \btau \in H^s(\cup_j\Omega_j, \bbM) \cap H(\bdiv,\Omega,\bbM), \quad s>1/2,
\end{equation}
and 
\begin{equation}\label{commutingV}
	\bdiv \bPi^{\texttt{BDM}}_h \btau = \boldsymbol{Q}^{m-1}_h \bdiv \btau,\quad \forall \btau \in H^s(\cup_j\Omega_j, \bbM)\cap H(\bdiv, \Omega, \bbM), \quad s>1/2. 
\end{equation}
where $\boldsymbol{Q}^{m-1}_h$ is the $L^2(\Omega, \R^d)$-orthogonal projection onto $\cP_{m-1}(\cT_h, \R^d)$.

\section{The mixed-DG method and its convergence analysis}\label{sec:dg}

We assume that $\bF\in H^1(\cup_j \Omega_j, \R^d)$ and consider the following DG discretization of \eqref{mvf}: Find $\bsig_h\in \cP_{k}(\cT_h,\bbS)$ such that 
\begin{align}\label{MDG}
\begin{split}
		a(\bsig_h, \btau) &+ \inner*{\kappa \bdiv_h \bsig_h, \bdiv_h\btau }
	- \inner*{\mean{\kappa \bdiv_h \bsig_h}, \jump{\btau} }_{\cF^*_h}
	- \inner*{\mean{\kappa \bdiv_h \btau}, \jump{\bsig_h} }_{\cF^*_h}
	\\[1ex]
	&+ \mathtt{a} \inner*{ \gamma_\cF^{-1}h_\cF^{-1}\jump{\bsig_h}, \jump{\btau}}_{\cF^*_h}
	= - \inner*{\kappa \bF , \bdiv_h\btau } + \inner*{\mean{\kappa \bF}, \jump{\btau} }_{\cF^*_h},
	\quad \forall \btau \in \cP_{k}(\cT_h,\bbS),
\end{split}
\end{align}
where $\mathtt{a}>0$ is a large enough given parameter. In the case $\Gamma_D = \Gamma$, we notice that taking $\btau= \bI$ in 
\eqref{MDG} implies that $\inner*{\tr \bsig_h,1}=0$.

\begin{remark}
  Should the boundary conditions be modified to be non-homogeneous
  \begin{equation}\label{bc:non-homogeneous}
    \frac{\kappa}{\mu}(\bF + \bdiv\bsig) = \bG_D \quad \text{on}\quad \Gamma_D, \qquad \text{and} \qquad
    \bsig\bn = \bG_N \quad \text{on} \quad \Gamma_N,\end{equation}
  for sufficiently regular Dirichlet velocity $\bG_D$ and sufficiently regular normal Cauchy stress $\bG_N$, then  \eqref{MDG} needs to be rewritten as follows: Find $\bsig_h\in \cP_{k}(\cT_h,\bbS)$ such that 
\begin{align}\label{MDG-nonh}
	a(\bsig_h, \btau) &+ \inner*{\kappa \bdiv_h \bsig_h, \bdiv_h\btau }
	- \inner*{\mean{\kappa \bdiv_h \bsig_h}, \jump{\btau} }_{\cF^*_h}
	- \inner*{\mean{\kappa \bdiv_h \btau}, \jump{\bsig_h} }_{\cF^*_h}
\nonumber	\\[1ex]
	&+ \mathtt{a} \inner*{ \gamma_\cF^{-1}h_\cF^{-1}\jump{\bsig_h}, \jump{\btau}}_{\cF^*_h}
	= \langle\mu\,\bG_D,\btau\bn\rangle_{\Gamma_D} - \inner*{\kappa \bF , \bdiv_h\btau } + \inner*{\mean{\kappa \bF}, \jump{\btau} }_{\cF^*_h}\\[1ex]
        &\qquad \qquad \qquad \qquad \qquad \qquad - \langle\kappa\,\bG_N, \bdiv_h\btau\rangle_{\Gamma_N} 
        +\langle \mathtt{a} \gamma_\cF^{-1}h_\cF^{-1}\bG_N, \btau\bn\rangle_{\Gamma_N}        
	\quad \forall \btau \in \cP_{k}(\cT_h,\bbS).\nonumber
\end{align}
  \end{remark}

\begin{prop}\label{SPD}
The linear systems of equations corresponding to \eqref{MDG} are symmetric and positive definite, provided {$\mathtt{a} \geq 2 C_{\textup{tr}}^2 + \frac{1}{2}$}. 
\end{prop}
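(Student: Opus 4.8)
Write $A_h(\cdot,\cdot)$ for the bilinear form on the left-hand side of \eqref{MDG}, regarded as a form on $\cP_{k}(\cT_h,\bbS)$. The plan is to show that $A_h$ is symmetric and coercive for the norm $\vertiii{\cdot}$ of \eqref{eq:triple-norm}; since $\cP_{k}(\cT_h,\bbS)$ is finite dimensional and $\vertiii{\cdot}$ is a norm on it, this is exactly the assertion that the matrix of \eqref{MDG} is symmetric positive definite. Symmetry is immediate from the algebraic structure: $a(\cdot,\cdot)$ is symmetric, the volume term $\inner*{\kappa\bdiv_h\bsig_h,\bdiv_h\btau}$ and the penalty $\mathtt{a}\inner*{\gamma_\cF^{-1}h_\cF^{-1}\jump{\bsig_h},\jump{\btau}}_{\cF^*_h}$ are symmetric in $(\bsig_h,\btau)$, and the two remaining terms $-\inner*{\mean{\kappa\bdiv_h\bsig_h},\jump{\btau}}_{\cF^*_h}$ and $-\inner*{\mean{\kappa\bdiv_h\btau},\jump{\bsig_h}}_{\cF^*_h}$ are interchanged when $\bsig_h$ and $\btau$ are swapped.

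For coercivity I would evaluate
\[
A_h(\btau,\btau) = a(\btau,\btau) + \norm*{\kappa^{\frac12}\bdiv_h\btau}^2_{0,\Omega} - 2\,\inner*{\mean{\kappa\bdiv_h\btau},\jump{\btau}}_{\cF^*_h} + \mathtt{a}\,\norm*{\gamma_\cF^{-\frac12}h_\cF^{-\frac12}\jump{\btau}}^2_{0,\cF^*_h},
\]
and bound the crossed term by inserting the weights $\gamma_\cF^{\pm\frac12}h_\cF^{\pm\frac12}$, applying the Cauchy--Schwarz inequality in $L^2(\cF^*_h,\R^d)$, and then invoking the discrete trace inequality \eqref{discTrace} for $\bdiv_h\btau\in\cP_{k-1}(\cT_h,\R^d)\subset\cP_{k}(\cT_h,\R^d)$:
\[
2\,\bigl|\inner*{\mean{\kappa\bdiv_h\btau},\jump{\btau}}_{\cF^*_h}\bigr| \le 2\,C_{\textup{tr}}\,\norm*{\kappa^{\frac12}\bdiv_h\btau}_{0,\Omega}\,\norm*{\gamma_\cF^{-\frac12}h_\cF^{-\frac12}\jump{\btau}}_{0,\cF^*_h}.
\]
A weighted Young inequality $2C_{\textup{tr}}\,xy \le C_{\textup{tr}}\delta\,x^2 + C_{\textup{tr}}\delta^{-1}y^2$ with $\delta = 1/(2C_{\textup{tr}})$ then yields
\[
A_h(\btau,\btau) \ge a(\btau,\btau) + \tfrac12\,\norm*{\kappa^{\frac12}\bdiv_h\btau}^2_{0,\Omega} + \bigl(\mathtt{a} - 2C_{\textup{tr}}^2\bigr)\norm*{\gamma_\cF^{-\frac12}h_\cF^{-\frac12}\jump{\btau}}^2_{0,\cF^*_h},
\]
so that under the hypothesis $\mathtt{a}\ge 2C_{\textup{tr}}^2 + \tfrac12$ one has $A_h(\btau,\btau)\ge\tfrac12\vertiii{\btau}^2$. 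I expect this single estimate to be the crux of the argument: one must split the crossed term with exactly the weights built into $\vertiii{\cdot}$ and use \eqref{discTrace} (whose constant is independent of $h$ and $\kappa$), and the particular choice of $\delta$ is precisely what pins down the stated threshold for $\mathtt{a}$.

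Finally I would check that $\vertiii{\cdot}$ is a norm on $\cP_{k}(\cT_h,\bbS)$, which upgrades coercivity to positive definiteness. If $\vertiii{\btau}=0$, then $\btau^\tD = \mathbf{0}$, $\bdiv_h\btau = \mathbf{0}$, $\jump{\btau}_F = \mathbf{0}$ for all $F\in\cF^*_h$, and, when $\theta=1$, also $\inner*{\tr\btau,1}=0$. Vanishing of all these jumps forces the normal traces of $\btau$ to match across interior faces and to vanish on $\Gamma_N$, hence $\btau\in X$ and $\bdiv\btau=\bdiv_h\btau=\mathbf{0}$; moreover $\btau^\tD=\mathbf{0}$ gives $\btau = \tfrac1d(\tr\btau)\,\bI$, so $\bdiv\btau=\mathbf{0}$ makes each $\partial_i(\tr\btau)$ vanish and $\tr\btau$ constant on the connected domain $\Omega$. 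If $\Gamma_N$ has positive surface measure, then $\btau\bn=\mathbf{0}$ on $\Gamma_N$ yields $\tr\btau\equiv 0$; if $\Gamma_D=\Gamma$, then $\theta=1$ and $\inner*{\tr\btau,1}=0$ together with $\tr\btau$ constant again yields $\tr\btau\equiv 0$. In either case $\btau=\mathbf{0}$, so $A_h(\btau,\btau)\ge\tfrac12\vertiii{\btau}^2>0$ for every $\btau\ne\mathbf{0}$, which is the claimed symmetric positive definiteness.
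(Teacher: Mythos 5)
Your proof is correct and follows essentially the same route as the paper: the crux is the identical coercivity estimate obtained by weighting the crossed term with $\gamma_\cF^{\pm\frac12}h_\cF^{\pm\frac12}$, applying Cauchy--Schwarz, the discrete trace inequality \eqref{discTrace}, and Young's inequality to arrive at $A_h(\btau,\btau)\ge\tfrac12\vertiii{\btau}^2$ under $\mathtt{a}\ge 2C_{\textup{tr}}^2+\tfrac12$. The only (minor) difference is in verifying that $\vertiii{\cdot}$ is a norm on $\cP_k(\cT_h,\bbS)$: the paper deduces $\btau=\mathbf{0}$ by citing the coercivity bound \eqref{ellipM}, whereas you argue directly that $\btau$ must be a constant multiple of $\bI$ and then kill the constant using the $\Gamma_N$ trace or the zero-mean condition --- a slightly more self-contained treatment that handles the case $\Gamma_D=\Gamma$ explicitly.
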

\begin{proof}
{We first point out that the mapping $\btau_h \mapsto \vertiii{\btau_h}$ defined in \eqref{eq:triple-norm} is actually a norm on  $\cP_k(\cT_h,\bbS)$. Indeed, if $\btau_h \in \cP_k(\cT_h,\bbS)$ satisfies $\vertiii{\btau_h} = 0$, then $\btau_h$ is H(div)-conforming since the jumps of its normal components vanish across all the internal faces $F\in \cF_h^0$. Moreover, it holds $\btau_h\bn = \mathbf 0$ on $\Gamma_N$. Hence,  $\btau_h = \mathbf 0$ as a consequence of \eqref{ellipM}.} 

By virtue of the Cauchy-Schwarz inequality, Young's inequality together with the discrete trace inequality \eqref{discTrace} it holds,
\begin{align}\label{es0}
\begin{split}
	2 \inner*{ \mean{\kappa \bdiv_h \btau_h}, \jump{\btau_h} }_{\cF_h^*} &\leq  2\norm*{\gamma_\cF^{\frac{1}{2}}h_\cF^{\frac{1}{2}} \mean{\kappa \bdiv_h \btau_h} }_{0,\cF^*_h} \norm*{\gamma_\cF^{-\frac{1}{2}} h_\cF^{-\frac{1}{2}} \jump{\btau_h} }_{0,\cF^*_h}
	\\[1ex]
&\leq 2 C_{\textup{tr}}\norm*{\kappa^{\frac{1}{2}} \bdiv \btau_h}_{0,\Omega} \norm*{\gamma_\cF^{-\frac{1}{2}} h_\cF^{-\frac{1}{2}} \jump{\btau_h}}_{0,\cF^*_h}
\\[1ex]
&\leq \tfrac{1}{2} \norm*{\kappa^{\frac{1}{2}} \bdiv \btau_h}^2_{0,\Omega} + 2 C_{\textup{tr}}^2 \norm*{\gamma_\cF^{-\frac{1}{2}} h_\cF^{-\frac{1}{2}} \jump{\btau_h}}^2_{0,\cF^*_h},\quad \forall \btau_h \in \cP_k(\cT_h,\bbS). 
\end{split}
\end{align} 
It follows from \eqref{es0} that 
	 \begin{align}\label{ste}
\begin{split}
		a(\btau_h, \btau_h) + \norm*{\kappa^{\frac{1}{2}} \bdiv_h \btau_h}^2_{0,\Omega} + \mathtt{a} \norm*{ \gamma_\cF^{-\frac{1}{2}} h_\cF^{-\frac{1}{2}}\jump{\btau_h}}^2_{0,\cF^*_h}
	 - 2 \inner*{\mean{\kappa \bdiv_h \btau_h}, \jump{\btau_h} }_{\cF^*_h}& \geq 
	 \\[1ex]
	 a(\btau_h, \btau_h) + \tfrac{1}{2} \norm*{\kappa^{\frac{1}{2}} \bdiv \btau_h}^2_{0,\Omega} + (\mathtt{a} - 2 C_{\textup{tr}}^2 ) \norm*{ \gamma_\cF^{-\frac{1}{2}} h_\cF^{-\frac{1}{2}}\jump{\btau_h}}^2_{0,\cF^*_h} &\geq \tfrac{1}{2} \vertiii{\btau_h}^2>0, 
\end{split}
\end{align}
for all $\mathbf 0 \neq \btau_h\in \cP_k(\cT_h,\bbS)$, if  $\mathtt{a}\geq 2 C_{\textup{tr}}^2 + \frac{1}{2}$, and the result follows. 
\end{proof}

\begin{prop}
	The solution $\bsig$ of \eqref{mvf} satisfies the following consistency property
	\begin{align}\label{consistency}
\begin{split}
		a(\bsig, \btau) &+ \inner*{\kappa \bdiv \bsig, \bdiv_h\btau }_\Omega
	- \inner*{\mean{\kappa \bdiv \bsig}, \jump{\btau} }_{\cF^*_h}
	\\[1ex]
	&= - \inner*{\kappa \bF , \bdiv_h\btau } + \inner*{\mean{\kappa \bF}, \jump{\btau} }_{\cF^*_h},\quad \forall \btau \in \cP_{k}(\cT_h,\bbS).
\end{split}
\end{align}
\end{prop}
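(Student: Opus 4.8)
The plan is to start from the exact solution $\bsig \in X$ of the continuous variational problem \eqref{mvf} and show that, after a localized integration by parts, it satisfies the DG consistency identity \eqref{consistency}. Since $\bsig \in X \subset H(\bdiv,\Omega,\bbS)$, its normal trace is single-valued across every interior face, so $\jump{\bsig}_F = \boldsymbol 0$ on all $F \in \cF_h^0$; moreover $\bsig$ belongs to $X$, which (when $\Gamma_D \neq \Gamma$) means $\bsig\bn$ vanishes on $\Gamma_N$ in the appropriate weak sense, hence $\jump{\bsig}_F = \boldsymbol 0$ on $F \in \cF_h^N$ as well. Because $\bF \in H^1(\cup_j\Omega_j,\R^d)$ and the regularity \eqref{d1} implied by \eqref{mvf} forces $\kappa(\bdiv\bsig + \bF)$ to have enough smoothness (it equals a function whose strain is $\tfrac12\bsig^\tD \in L^2$, and is piecewise $H^1$ on the partition), the quantity $\kappa\bdiv\bsig$ admits a well-defined two-valued trace on $\cF^*_h$, so all the averages appearing in \eqref{consistency} make sense.

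The core computation is as follows. First I would note that, since $\kappa|_{\Omega_j}$ is constant and $\bsig$ is (piecewise) regular enough, equation \eqref{d1} reads $\tfrac12\bsig^\tD = \beps\big(\kappa(\bdiv\bsig + \bF)\big)$ inside each $\Omega_j$. Denote $\bw := \kappa(\bdiv\bsig + \bF) \in H^1(\cup_j\Omega_j,\R^d)$; by \eqref{d2}, $\bw = \boldsymbol 0$ on $\Gamma_D$, and by \eqref{d1}, $\beps(\bw) = \tfrac12\bsig^\tD$ in each subdomain. Then for any $\btau \in \cP_k(\cT_h,\bbS)$ I would integrate $a(\bsig,\btau)$ elementwise: on each $K \in \cT_h$,
\[
  \tfrac12\inner*{\bsig^\tD,\btau^\tD}_K = \inner*{\beps(\bw),\btau}_K
  = -\inner*{\bw, \bdiv\btau}_K + \int_{\partial K} \btau\bn_K\cdot\bw,
\]
using that $\btau$ is symmetric so $\btau:\beps(\bw) = \btau:\bnabla\bw$, together with the element-wise Green formula. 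Summing over $K$ and reorganizing the face terms in the standard DG fashion (using the identity $\sum_K \int_{\partial K} \btau\bn_K\cdot\bw = \inner*{\jump{\btau},\mean{\bw}}_{\cF_h^0} + \inner*{\jump{\bw},\mean{\btau}}_{\cF_h^0} + \text{boundary terms}$, where $\jump{\bw} = 0$ on interior faces since $\bw \in H^1$ globally after noting $\beps(\bw) \in L^2$... actually $\bw$ is only piecewise $H^1$, but the DG magic is that it is $\jump{\btau}$, not $\jump{\bw}$, that we want), I would land on
\[
  \tfrac12\inner*{\bsig^\tD,\btau^\tD}_\Omega
  = -\inner*{\bw, \bdiv_h\btau}_\Omega + \inner*{\mean{\bw},\jump{\btau}}_{\cF^*_h},
\]
where the boundary faces on $\Gamma_D$ contribute nothing because $\btau\bn$ is paired with $\bw = \boldsymbol 0$ there, and the $\Gamma_N$ faces are absorbed into $\cF^*_h$. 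Substituting $\bw = \kappa\bdiv\bsig + \kappa\bF$ and separating the two pieces yields exactly \eqref{consistency}; the $\theta\inner*{\tr\bsig,1}\inner*{\tr\btau,1}$ term in $a$ is handled trivially since it only appears when $\Gamma_D = \Gamma$ and is already present on both sides.

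The main obstacle is the regularity bookkeeping needed to justify the element-wise Green formula and the existence of the face averages $\mean{\kappa\bdiv\bsig}$. One must argue that, away from the a priori-known $H(\bdiv)$ regularity, equation \eqref{d1} upgrades $\bdiv\bsig + \bF$ to something whose restriction to each $\Omega_j$ lies in $H^1$ — concretely, $\kappa(\bdiv\bsig+\bF)$ has symmetric gradient $\tfrac12\bsig^\tD \in L^2$ on $\Omega_j$ and a well-defined trace, so by Korn's inequality (combined with the known $L^2$ control of $\bdiv\bsig$) it is in $H^1(\Omega_j,\R^d)$; this gives enough trace regularity to write $\mean{\kappa\bdiv\bsig}$ on each $F \in \cF^*_h$ and to integrate by parts on each $K$. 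Once this is in place, the rest is the routine DG face-rearrangement identity, and the vanishing of the Dirichlet-boundary and the interior-jump-of-$\bw$ terms. I would state the needed regularity as a short preliminary remark and then carry out the elementwise integration by parts as the one-line heart of the proof.
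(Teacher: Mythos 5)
Your overall route coincides with the paper's: set $\bw:=\kappa(\bdiv\bsig+\bF)$, identify $\beps(\bw)=\tfrac12\bsig^\tD$, integrate by parts element by element, and rearrange the face contributions. The genuine gap sits exactly where you hedge: the continuity of $\bw$ across interior faces. For each $F=\bar K\cap\bar K'$ one has
\[
\int_F\big(\btau_K\bn_K\cdot\bw_K + \btau_{K'}\bn_{K'}\cdot\bw_{K'}\big)
= \int_F\jump{\btau}_F\cdot\mean{\bw}_F
+ \tfrac12\int_F\big(\btau_K\bn_K-\btau_{K'}\bn_{K'}\big)\cdot\big(\bw_K-\bw_{K'}\big),
\]
so the rearrangement of $\sum_K\int_{\partial K}\btau\bn_K\cdot\bw$ produces, besides $\inner*{\mean{\bw},\jump{\btau}}_{\cF_h^*}$, a second interior-face term pairing a (generically nonzero) combination of the two one-sided normal traces of $\btau$ with the jump of $\bw$. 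There is no ``DG magic'' that removes it: since $\btau\in\cP_k(\cT_h,\bbS)$ is an arbitrary discontinuous test tensor, that term vanishes only if $\bw_K=\bw_{K'}$ on every $F\in\cF_h^0$. Your regularity paragraph only gives $\bw\in H^1(\Omega_j,\R^d)$ subdomain by subdomain, which does not rule out jumps — and indeed $\bdiv\bsig+\bF=\tfrac{\mu}{\kappa}\bu$ genuinely jumps across the permeability interfaces, so the continuity of the product $\kappa(\bdiv\bsig+\bF)$ is not something you can wave through.

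The missing ingredient, which is what the paper's proof supplies, is that $\bw=\mu\bu$ is \emph{globally} $H^1$: testing \eqref{mvf} with smooth, compactly supported symmetric $\btau$ shows that the distributional symmetric gradient of $\bw$ on all of $\Omega$ (not piecewise) equals $\tfrac12\bsig^\tD\in L^2(\Omega,\bbS)$; since $\bw\in L^2(\Omega,\R^d)$ because $\bsig\in H(\bdiv,\Omega,\bbS)$, Korn's inequality yields $\bw\in H^1(\Omega,\R^d)$, hence $\bw_K=\bw_{K'}$ on every interior face and $\mean{\bw}$ is single-valued; a Green formula applied to \eqref{mvf} then gives $\bw=\boldsymbol 0$ on $\Gamma_D$, which disposes of the Dirichlet boundary faces as you indicated. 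With that one statement corrected your computation goes through and yields \eqref{consistency}. Two small additional remarks: the $\theta$-term is harmless because $\inner*{\tr\bsig,1}=0$ (take $\btau=\bI$ in \eqref{mvf}), not because it ``appears on both sides''; and the identity $\inner*{\bsig^\tD,\btau^\tD}=\inner*{\bsig^\tD,\btau}$ you implicitly use is fine since $\tr\bsig^\tD=0$.
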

\begin{proof}
	 Taking into account that $\inner*{\tr\bsig, 1} = 0$ in the case $\Gamma_D = \Gamma$, and testing \eqref{mvf} with a tensor $\btau:\Omega \to \bbS$ whose entries are  indefinitely differentiable and compactly supported  in $\Omega$, we deduce that $\bu := \frac{\kappa}{{\mu}} (\bdiv \bsig + \bF)$ satisfies $\beps(\bu) = \frac{1}{2{\mu}}\bsig^\tD \in L^2(\Omega,\bbS)$. Moreover, applying a Green's formula in \eqref{mvf}  yields $\bu = \mathbf 0$ on $\Gamma_D$. Hence, by virtue of Korn's inequality, $\bu\in H^1_D(\Omega,\R^d):= \set*{\bv \in  H^1(\Omega,\R^d):\ \bv|_{\Gamma_D} = \mathbf 0}$, which ensures that 
	\[
  \bdiv \bsig = -\bF + {\frac{\mu}{\kappa}}\bu \in H^1(\cup_j \Omega_j, \R^d).
\]
Furthermore, an integration by parts on each element $K\in \cT_h$ gives  
\begin{align*}
	- \tfrac{1}{2} \inner*{\bsig^\tD, \btau} =   - \inner*{{\mu}\beps(\bu), \btau} 
    =  \inner*{{\mu}\bu, \bdiv_h\btau } - \inner*{{\mu}\mean{\bu}, \jump{\btau} }_{\cF^*_h} \quad   \forall \btau\in \cP_{k}(\cT_h,\bbS),
\end{align*}
and the result follows by substituting $\bu := {\frac{\kappa}{\mu}} (\bdiv \bsig + \bF)$ in the last expression. 
\end{proof}

\begin{lemma}
There exists $\mathtt{a}_0>0$ such that 
\begin{equation}\label{stab0}
	 \vertiii{\bsig - \bsig_h} \leq C \vertiii{\bsig - \pi_h \bsig}_*,
\end{equation}
holds true for all $\mathtt{a} \geq  \mathtt{a}_0$, with $C$ and $\mathtt{a}_0$  independent of $h$, $\kappa$, {and $\mu$}. 
\end{lemma}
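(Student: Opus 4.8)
The plan is to run the standard Strang-type argument for a symmetric, coercive nonconforming (DG) scheme. The bilinear form on the right-hand side of \eqref{MDG}, call it
\[
A_h(\bze,\btau) := a(\bze,\btau) + \inner*{\kappa \bdiv_h \bze, \bdiv_h\btau }
- \inner*{\mean{\kappa \bdiv_h \bze}, \jump{\btau} }_{\cF^*_h}
- \inner*{\mean{\kappa \bdiv_h \btau}, \jump{\bze} }_{\cF^*_h}
+ \mathtt{a} \inner*{ \gamma_\cF^{-1}h_\cF^{-1}\jump{\bze}, \jump{\btau}}_{\cF^*_h},
\]
is the object to control. First I would record the two quantitative ingredients already available: the coercivity estimate $A_h(\btau_h,\btau_h) \geq \tfrac12 \vertiii{\btau_h}^2$ on $\cP_k(\cT_h,\bbS)$, which is exactly \eqref{ste} from Proposition~\ref{SPD} (valid once $\mathtt{a}\geq \mathtt{a}_0 := 2C_{\textup{tr}}^2+\tfrac12$), and the consistency identity \eqref{consistency}. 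Since $\jump{\bsig}=\mathbf 0$ across $\cF^*_h$ (the exact stress is in $H(\bdiv,\Omega,\bbS)$ and its normal trace vanishes on $\Gamma_N$), \eqref{consistency} says precisely that $A_h(\bsig,\btau) = -\inner*{\kappa \bF,\bdiv_h\btau} + \inner*{\mean{\kappa\bF},\jump{\btau}}_{\cF^*_h}$ for all $\btau\in\cP_k(\cT_h,\bbS)$, i.e. the scheme is consistent: $A_h(\bsig-\bsig_h,\btau)=0$ for all discrete $\btau$.

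Next I would split $\bsig-\bsig_h = (\bsig - \pi_h\bsig) + (\pi_h\bsig - \bsig_h)$, abbreviating $\bet_h := \pi_h\bsig - \bsig_h \in \cP_k(\cT_h,\bbS)$ (here one uses that $\pi_h$ preserves symmetry and has range in $\cP_k$). By coercivity and Galerkin orthogonality,
\[
\tfrac12\vertiii{\bet_h}^2 \leq A_h(\bet_h,\bet_h) = A_h(\pi_h\bsig - \bsig,\bet_h) = -A_h(\bsig - \pi_h\bsig,\bet_h),
\]
so everything reduces to bounding $|A_h(\bsig-\pi_h\bsig,\bet_h)|$ by $C\vertiii{\bsig-\pi_h\bsig}_*\,\vertiii{\bet_h}$. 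I would estimate the five terms of $A_h$ one by one, using Cauchy--Schwarz on each face/element sum. The term $a(\cdot,\cdot)$ and $\inner*{\kappa\bdiv_h(\cdot),\bdiv_h\bet_h}$ are controlled directly by $\norm*{\bsig-\pi_h\bsig}_{0,\Omega}$, $\norm*{\kappa^{1/2}\bdiv_h(\bsig-\pi_h\bsig)}_{0,\Omega}$ times the corresponding pieces of $\vertiii{\bet_h}$ (note $a(\btau,\btau)\le C\norm\btau_{0,\Omega}^2$). For $\inner*{\mean{\kappa\bdiv_h(\bsig-\pi_h\bsig)},\jump{\bet_h}}_{\cF^*_h}$ I would insert the weights $\gamma_\cF^{\pm 1/2}h_\cF^{\pm1/2}$ to pair the $\gamma_\cF^{1/2}h_\cF^{1/2}\mean{\kappa\bdiv_h(\cdot)}$ factor (present in $\vertiii{\cdot}_*$) against $\gamma_\cF^{-1/2}h_\cF^{-1/2}\jump{\bet_h}$ (present in $\vertiii{\bet_h}$). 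For the symmetric cross term $\inner*{\mean{\kappa\bdiv_h\bet_h},\jump{\bsig-\pi_h\bsig}}_{\cF^*_h}$ I would first apply the discrete trace inequality \eqref{discTrace} to replace $\gamma_\cF^{1/2}h_\cF^{1/2}\mean{\kappa\bdiv_h\bet_h}$ by $C_{\textup{tr}}\norm*{\kappa^{1/2}\bdiv_h\bet_h}_{0,\Omega}$, then pair against $\gamma_\cF^{-1/2}h_\cF^{-1/2}\jump{\bsig-\pi_h\bsig}$; but note that $\vertiii{\cdot}_*$ as defined does not contain a $\gamma_\cF^{-1/2}h_\cF^{-1/2}\jump{\cdot}$ term, so this is where care is needed — see below. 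Finally the penalty term pairs $\gamma_\cF^{-1/2}h_\cF^{-1/2}\jump{\bsig-\pi_h\bsig}$ against $\gamma_\cF^{-1/2}h_\cF^{-1/2}\jump{\bet_h}$. Collecting, $\tfrac12\vertiii{\bet_h}^2 \le C\,\vertiii{\bsig-\pi_h\bsig}_*\,\vertiii{\bet_h}$, hence $\vertiii{\bet_h}\le C\vertiii{\bsig-\pi_h\bsig}_*$, and the triangle inequality $\vertiii{\bsig-\bsig_h}\le \vertiii{\bsig-\pi_h\bsig}+\vertiii{\bet_h}$ together with the obvious bound $\vertiii{\bsig-\pi_h\bsig}\lesssim\vertiii{\bsig-\pi_h\bsig}_*$ (the $a$-part and $\bdiv_h$-part are common, and the jump term of $\vertiii{\cdot}$ on $\pi_h\bsig-\bsig$ is estimated via the multiplicative trace inequality \eqref{multiplicativetrace}/\eqref{scott0b} by the $0$-norm and $\bnabla$-norm, which are $\le \vertiii{\cdot}_*$-type quantities) gives \eqref{stab0}.

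\textbf{Main obstacle.} The delicate point is the treatment of the two terms involving $\jump{\bsig-\pi_h\bsig}$ on the skeleton, since the seminorm $\vertiii{\cdot}_*$ as written controls $\norm*{\btau}_{0,\Omega}$ and $\bnabla_h\btau$-type boundary data but not literally $\gamma_\cF^{-1/2}h_\cF^{-1/2}\jump{\btau}$. The resolution is that for $\btau = \bsig-\pi_h\bsig$ one has $\jump{\pi_h\bsig}=\mathbf 0$ wherever $\pi_h\bsig$ is $H(\bdiv)$-conforming — and indeed $\pi_h$ has range in $H^1(\Omega,\bbS)$, so $\jump{\bsig-\pi_h\bsig}=\jump{\bsig}-\mathbf 0 = \mathbf 0$ on interior faces and $=\bsig\bn-(\pi_h\bsig)\bn$ on $\Gamma_N$; thus $\gamma_\cF^{-1/2}h_\cF^{-1/2}\jump{\bsig-\pi_h\bsig}$ reduces to boundary-face contributions $h_F^{-1/2}\kappa_K^{1/2}\norm*{(\bsig-\pi_h\bsig)\bn}_{0,F}$, which by the multiplicative trace inequality \eqref{multiplicativetrace} applied to $\bsig-\pi_h\bsig$ and \eqref{scott0} are bounded by $\kappa_K^{1/2}(h_K^{-1}\norm*{\bsig-\pi_h\bsig}_{0,K} + \norm*{\bnabla(\bsig-\pi_h\bsig)}_{0,K})$; absorbing the $\kappa$ weight appropriately and comparing with the definition of $\vertiii{\cdot}_*$ — in particular using $h\le \sqrt{\kappa_+}$ as in Lemma~\ref{maintool} to balance powers — yields the required control by $\vertiii{\bsig-\pi_h\bsig}_*$. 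Making the $\kappa$- and $h$-bookkeeping in this last bound genuinely parameter-robust (constants independent of $\kappa$, $\mu$, and the permeability jumps) is the only part of the argument that needs real attention; everything else is Cauchy--Schwarz plus the already-established coercivity and trace inequalities.
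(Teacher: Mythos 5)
Your argument is essentially the paper's own proof: the error equation for $\be_h=\pi_h\bsig-\bsig_h$ obtained from the consistency identity, coercivity of the DG form in the $\vertiii{\cdot}$-norm for $\mathtt{a}$ large enough (via the discrete trace inequality \eqref{discTrace} and Young's inequality), and Cauchy--Schwarz with the weights $\gamma_\cF^{\pm 1/2}h_\cF^{\pm 1/2}$ on the face terms, followed by the triangle inequality. The one place where you are actually more careful than the paper is the term $\jump{\bsig-\pi_h\bsig}$ on $\Gamma_N$-faces: the paper's functional $G(\btau)$ silently omits these contributions (they vanish on interior faces because $\pi_h\bsig\in H^1(\Omega,\bbS)$ and $\bsig\in H(\bdiv,\Omega,\bbS)$, but not on $\cF_h^N$ since $\pi_h$ does not preserve the condition $\bsig\bn=\mathbf 0$), whereas you flag them and sketch a trace-inequality bound --- just be aware that the resulting quantity $\kappa_K^{1/2}\bigl(h_K^{-1}\norm{\bsig-\pi_h\bsig}_{0,K}+\norm{\bnabla(\bsig-\pi_h\bsig)}_{0,K}\bigr)$ is not literally dominated by $\vertiii{\bsig-\pi_h\bsig}_*$ as defined, although it has the same order in $h$ and the same $\kappa$-scaling, so the downstream estimate of Theorem~\ref{conv} is unaffected.
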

\begin{proof}
We introduce $\be_h = \pi_h\bsig - \bsig_h \in \cP_k(\cT_h, \bbS)$ and 
notice that, thanks to \eqref{consistency}, 
\begin{align}\label{step1}
\begin{split}
		a(\be_h, \btau) &+ \inner*{\kappa \bdiv_h \be_h, \bdiv_h\btau }
	- \inner*{\mean{\kappa \bdiv_h \be_h}, \jump{\btau} }_{\cF^*_h}
	\\[1ex]
	&- \inner*{\mean{\kappa \bdiv_h \btau}, \jump{\be_h} }_{\cF^*_h}
	+ \mathtt{a} \inner*{ \gamma_\cF^{-1} h_\cF^{-1}\jump{\be_h}, \jump{\btau_h}}_{\cF^*_h}
	= G(\btau) \quad  \forall \btau\in \cP_{k}(\cT_h,\bbS),
\end{split}
\end{align}
where
\[
  G(\btau) :=- a(\bsig - \pi_h \bsig, \btau) - \inner*{\kappa\bdiv (\bsig - \pi_h \bsig), \bdiv_h \btau }  + \inner*{\mean{\kappa \bdiv (\bsig - \pi_h \bsig)}, \jump{\btau} }_{\cF^*_h}.
\]
Taking $\btau = \be_h$ in \eqref{step1} yields
 \begin{align}\label{step2}
\begin{split}
		a(\be_h, \be_h) &+ \norm*{\kappa^{\frac{1}{2}} \bdiv_h \be_h}^2_{0,\Omega} + \mathtt{a} \norm*{ \gamma_\cF^{-\frac{1}{2}} h_\cF^{-\frac{1}{2}}\jump{\be_h}}^2_{0,\cF^*_h}
	 = 2 \inner*{\mean{\kappa \bdiv_h \be_h}, \jump{\be_h} }_{\cF^*_h} + G(\be_h).
\end{split}
\end{align}
Let us bound now each of terms of the right-hand side of \eqref{step2} by means of the Cauchy--Schwarz and Young inequalities. For the first term, proceeding as in  \eqref{es0} gives 
\begin{align}\label{es1}
\begin{split}
	2 \inner*{ \mean{\kappa \bdiv_h \be_h}, \jump{\be_h} }_{\cF_h^*} \leq \tfrac{1}{4} \norm*{\kappa^{\frac{1}{2}} \bdiv \be_h}^2_{0,\Omega} + 4 C_{\textup{tr}}^2 \norm*{\gamma_\cF^{-\frac{1}{2}} h_\cF^{-\frac{1}{2}} \jump{\be_h}}^2_{0,\cF^*_h}. 
\end{split}
\end{align} 
Next, we estimate $G(\be_h)$ in two steps{: from the one hand,}  
\begin{align}\label{es2}
\begin{split}
&	- a(\bsig - \pi_h \bsig, \be_h) - \inner*{\kappa\bdiv (\bsig - \pi_h \bsig), \bdiv_h \be_h } 
	\\[1ex]
	&\qquad  \leq  \tfrac{1}{2} a(\be_h, \be_h) + \tfrac{1}{4} \norm*{\kappa^{\frac{1}{2}}\bdiv_h \be_h}^2_{0,\Omega}  + \tfrac{1}{2} a(\bsig - \pi_h \bsig, \bsig - \pi_h \bsig )+ \norm*{\kappa^{\frac{1}{2}}\bdiv (\bsig - \pi_h \bsig)}^2_{0,\Omega},
\end{split}
\end{align}
{and from the other hand, }
\begin{align}\label{es3}
\begin{split}
	\inner*{\mean{\kappa \bdiv (\bsig - \pi_h \bsig)}, \jump{\be_h} }_{\cF^*_h} &\leq \norm*{\gamma_\cF^{-\frac{1}{2}} h_\cF^{-\frac{1}{2}} \jump{\be_h}}_{0,\cF^*_h}
	\norm*{\gamma_\cF^{\frac{1}{2}} h_\cF^{\frac{1}{2}} \mean{\kappa \bdiv (\bsig - \pi_h \bsig)}}_{0,\cF^*_h}
	\\[1ex]
	& \leq \tfrac{1}{2}\norm*{\gamma_\cF^{-\frac{1}{2}} h_\cF^{-\frac{1}{2}} \jump{\be_h}}^2_{0,\cF^*_h} + \tfrac{1}{2}\norm*{\gamma_\cF^{\frac{1}{2}} h_\cF^{\frac{1}{2}} \mean{\kappa \bdiv (\bsig - \pi_h \bsig)}}^2_{0,\cF^*_h}.
\end{split}
\end{align}
Substituting \eqref{es1}-\eqref{es3} in \eqref{step2} and rearranging terms we deduce that \eqref{stab0} is satisfied if  $\mathtt{a} \geq \mathtt{a}_0:= 1 + 4 C_{\textup{tr}}^2$. 
\end{proof}

\begin{theorem}\label{conv}
Let $\bsig$ and $\bsig_h$ be the solutions of problems \eqref{mvf} and  \eqref{MDG}, respectively. If $\bsig\in  H^{r+1}(\cup_j\Omega_j,\bbM)$, with $r\geq 1$, then 
\begin{align}\label{asympSD}
\begin{split}
\vertiii{\bsig  - \bsig_h}    \leq C  \, {\sqrt \kappa_+} h^{\min\{r,k\}}
{\left( \sum_{j=1}^J   \norm*{ \bsig}^2_{r+1, \Omega_j}\right)^{1/2}} ,
\end{split}
\end{align}
for all $\mathtt{a} \geq \mathtt{a}_0$, with $C>0$ independent of $h$, $\kappa$, {and $\mu$}.
\end{theorem}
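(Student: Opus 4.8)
The two substantive ingredients are already available, so the argument is a short assembly: I would chain the quasi-optimality bound \eqref{stab0} with the interpolation estimate \eqref{tool} of Lemma~\ref{maintool}, and then extract the factor $\sqrt{\kappa_+}$ from the $\kappa$-weighted right-hand side. No new inequality is needed.

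First I would check that every quantity in sight is well defined. The exact stress $\bsig$ solves \eqref{mvf}, hence $\bsig\in X\subset H(\bdiv,\Omega,\bbS)$; combined with the hypothesis $\bsig\in H^{r+1}(\cup_j\Omega_j,\bbM)$, $r\ge 1$, and the fact that $\cT_h$ is aligned with the partition $\{\Omega_j\}$, this gives $\bsig\in H^{r+1}(\cT_h,\bbM)$ and $\bdiv_h(\bsig-\pi_h\bsig)\in H^{r}(\cT_h,\R^d)$ with $r\ge 1>\tfrac12$, so $\vertiii{\bsig-\pi_h\bsig}_*$ is finite and Lemma~\ref{maintool} applies with $\btau=\bsig$. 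Also $\bsig-\bsig_h\in\mathcal{X}^k(h)$, so $\vertiii{\bsig-\bsig_h}$ is the norm \eqref{eq:triple-norm}. Then, for $\mathtt{a}\ge\mathtt{a}_0$ (so that \eqref{stab0} holds) and, as required by Lemma~\ref{maintool}, $h\le\sqrt{\kappa_+}$, I chain the two estimates
\[
 \vertiii{\bsig-\bsig_h}\;\le\;C\,\vertiii{\bsig-\pi_h\bsig}_*\;\le\;C\,h^{\min\{r,k\}}\Big(\sum_{j=1}^J\kappa_j\,\norm*{\bsig}^2_{r+1,\Omega_j}\Big)^{1/2},
\]
and finally bound $\kappa_j\le\kappa_+$ termwise to obtain
\[
 \Big(\sum_{j=1}^J\kappa_j\,\norm*{\bsig}^2_{r+1,\Omega_j}\Big)^{1/2}\;\le\;\sqrt{\kappa_+}\,\Big(\sum_{j=1}^J\norm*{\bsig}^2_{r+1,\Omega_j}\Big)^{1/2},
\]
which is exactly \eqref{asympSD}. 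Since the constants in \eqref{stab0} and \eqref{tool} are independent of $h$, $\kappa$ and $\mu$, so is the resulting $C$.

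There is essentially no obstacle here, the work having been front-loaded into the quasi-optimality lemma and Lemma~\ref{maintool}; the only points requiring attention are matching the regularity hypotheses so that the auxiliary norm $\vertiii{\cdot}_*$ and the interpolation bound are genuinely available, and isolating the clean factor $\sqrt{\kappa_+}$ rather than a cruder $\kappa$-dependent one — which is precisely why the energy norm $\vertiii{\cdot}$ and the weights $\gamma_\cF$ were designed to neutralize the jumps of $\kappa$. If one wished to dispense with the side condition $h\le\sqrt{\kappa_+}$ inherited from Lemma~\ref{maintool}, one would have to re-balance the powers of $h$ against $\kappa_j$ in the $\bdiv$-terms of \eqref{tool}, but this is not needed for the stated result.
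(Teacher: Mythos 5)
Your proof is correct and follows exactly the paper's route: the paper's own proof is the one-line combination of the quasi-optimality bound \eqref{stab0} with Lemma~\ref{maintool}, and your extraction of the factor $\sqrt{\kappa_+}$ via $\kappa_j\le\kappa_+$ (together with the implicit exponent $1/2$ in \eqref{tool}) is precisely what is left tacit there.
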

\begin{proof}
	The result is a direct combination of \eqref{stab0} and the interpolation error estimates provided by Lemma~\ref{maintool}. 
\end{proof}

\section{Discrete post-processing of velocity and pressure}\label{sec:postpr}
We recall that the velocity field and the pressure can be recovered from the stress tensor  at the continuous level  by
\[
  \bu := {\frac{\kappa}{\mu}} (\bdiv \bsig + \bF )\in H_0^1(\Omega, \R^d) \quad \text{and} \quad p := -\tfrac{1}{d} \tr \bsig\in L^2(\Omega).
\]
These same expressions permit us to reconstruct, with local and independent calculations on each element, a discrete velocity $\bu_h$ and a discrete pressure $p_h$ that converge to their continuous counterpart at an optimal rate of convergence, as shown in the following results. 
\begin{corollary}\label{postV}
{Let $\bsig$ and $\bsig_h$ be the solutions of problems \eqref{mvf} and  \eqref{MDG}, respectively.} We introduce $\bu_h := {\frac{\kappa}{\mu}} (\bdiv_h \bsig_h + Q^{k-1}_h\bF )$. If $\bsig\in  H^{r+1}(\cup_j\Omega_j,\bbM)$ and $\bF\in  H^{r}(\cup_j\Omega_j,\R^d)$ with $r\geq 1$,  there exists a constant $C>0$ independent of $h$, $\kappa$, {and $\mu$} such that, for all $\mathtt{a} \geq  \mathtt{a}_0$,  
\[
  \norm{\bu - \bu_h}_{0,\Omega} \leq C {\frac{\kappa_+}{\mu}} \,  h^{\min\{r,k\}} \left( 
\sum_{j=1}^J    \norm*{ \bsig}^2_{r+1, \Omega_j} + \norm*{ \bF}^2_{r, \Omega_j}  \right)^{1/2}.
\]
\end{corollary}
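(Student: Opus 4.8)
The plan is to split the velocity error into a broken--divergence contribution, to be controlled by the stress error estimate of Theorem~\ref{conv}, and a projection contribution arising from the replacement of $\bF$ by $Q^{k-1}_h\bF$, to be controlled by the approximation properties of the $L^2$-orthogonal projection.

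First I would use that $\bsig\in X\subset H(\bdiv,\Omega,\bbS)$, so that its broken divergence coincides a.e.\ with the weak divergence $\bdiv\bsig$; subtracting the definitions of $\bu = \tfrac{\kappa}{\mu}(\bdiv\bsig+\bF)$ and $\bu_h = \tfrac{\kappa}{\mu}(\bdiv_h\bsig_h+Q^{k-1}_h\bF)$ then gives
\[
  \bu - \bu_h = \tfrac{\kappa}{\mu}\,\bdiv_h(\bsig - \bsig_h) + \tfrac{\kappa}{\mu}\,(\bF - Q^{k-1}_h\bF).
\]
To bound the first term I would exploit the pointwise inequality $\tfrac{\kappa}{\mu}\le \tfrac{\sqrt{\kappa_+}}{\mu}\,\kappa^{\frac12}$ together with the fact that, by the very definition \eqref{eq:triple-norm} of the triple norm, $\norm*{\kappa^{\frac12}\bdiv_h(\bsig-\bsig_h)}_{0,\Omega}\le \vertiii{\bsig-\bsig_h}$, so that
\[
  \norm*{\tfrac{\kappa}{\mu}\,\bdiv_h(\bsig-\bsig_h)}_{0,\Omega}\le \tfrac{\sqrt{\kappa_+}}{\mu}\,\vertiii{\bsig-\bsig_h},
\]
and then invoke Theorem~\ref{conv} to turn the right-hand side into $C\,\tfrac{\kappa_+}{\mu}\,h^{\min\{r,k\}}\big(\sum_{j}\norm*{\bsig}^2_{r+1,\Omega_j}\big)^{1/2}$. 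For the second term I would use $\tfrac{\kappa}{\mu}\le\tfrac{\kappa_+}{\mu}$ a.e.\ together with the classical elementwise estimate for the $L^2$-orthogonal projection onto $\cP_{k-1}(\cT_h,\R^d)$ (valid on the range $1\le\min\{r,k\}\le k$, which is guaranteed by $r\ge1$), yielding $\norm*{\bF-Q^{k-1}_h\bF}_{0,\Omega}\le C\,h^{\min\{r,k\}}\big(\sum_j\norm*{\bF}^2_{r,\Omega_j}\big)^{1/2}$. Combining the two bounds by the triangle inequality and absorbing the constants concludes the argument.

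I do not foresee a genuine obstacle, since every ingredient is already available from Theorem~\ref{conv} and from standard projection estimates; the only delicate bookkeeping is to keep the splitting $\tfrac{\kappa}{\mu}=\tfrac{\sqrt{\kappa_+}}{\mu}\,\kappa^{\frac12}$ in front of the divergence term, so that the extra $\sqrt{\kappa_+}$ produced by Theorem~\ref{conv} combines \emph{exactly} into the claimed factor $\kappa_+/\mu$, rather than leaving a spurious $(\kappa_+/\kappa_-)^{1/2}$ which a coarser estimate would produce.
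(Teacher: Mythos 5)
Your proposal is correct and follows essentially the same route as the paper: split $\bu-\bu_h$ into $\tfrac{\kappa}{\mu}\bdiv_h(\bsig-\bsig_h)$ plus $\tfrac{\kappa}{\mu}(\bF-Q_h^{k-1}\bF)$, absorb one $\kappa^{1/2}$ into the triple norm so that Theorem~\ref{conv} yields the factor $\kappa_+/\mu$, and bound the projection term by the standard $L^2$-orthogonal projection estimate. Your remark about keeping the splitting $\tfrac{\kappa}{\mu}=\tfrac{\sqrt{\kappa_+}}{\mu}\kappa^{1/2}$ is exactly the bookkeeping the paper performs (there written as $\kappa_j^2$ inside the sum before bounding by $\kappa_+^2$).
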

\begin{proof}
	The result follows immediately from \eqref{asympSD} and from the fact that
	\[
  \norm*{\kappa(\bF - Q^{k-1}_h\bF)}_{0,\Omega} \lesssim   \,  h^{\min\{r,k\}} 
\left( \sum_{j=1}^J   \kappa_j^2 \norm*{ \bF}^2_{r, \Omega_j} \right)^{1/2}.
\]

\end{proof}

\begin{corollary}\label{postP}
{Let $\bsig$ and $\bsig_h$ be the solutions of problems \eqref{mvf} and  \eqref{MDG}, respectively.} We consider $p_h := -\tfrac{1}{d} \tr \bsig_h$. If $\bsig\in  H^{r+1}(\cup_j\Omega_j,\bbM)$  with $r\geq 1$,  there exists a constant $C>0$ independent of $h$  {and $\mu$, but depending on $\kappa_+/\kappa_-$}   such that, for all $\mathtt{a} \geq  \mathtt{a}_0$,  
\[
  \norm{p - p_h}_{0,\Omega} \leq C \,  h^{\min\{r,k\}}
\left( \sum_{j=1}^J   \norm*{ \bsig}^2_{r+1, \Omega_j} \right)^{1/2}.
\]
\end{corollary}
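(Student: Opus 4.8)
The plan is to reduce the claim to an estimate for $\norm*{\tr(\bsig-\bsig_h)}_{0,\Omega}$, since $p - p_h = -\tfrac{1}{d}\,\tr(\bsig-\bsig_h)$, and then to obtain that estimate by a duality argument. The point is that the energy norm $\vertiii{\cdot}$ controls, through $a(\cdot,\cdot)$, only the \emph{deviatoric} part of the stress error in $L^2(\Omega)$, so the trace of $\bsig-\bsig_h$ has to be reached indirectly. I would put $q := \tr(\bsig-\bsig_h)$, which belongs to $L^2_0(\Omega)$ when $\Gamma_D = \Gamma$ (because $\inner*{\tr\bsig,1} = \inner*{\tr\bsig_h,1} = 0$) and to $L^2(\Omega)$ otherwise, and pick $\bv\in H^1(\Omega,\R^d)$ with $\bv|_{\Gamma_D} = \mathbf 0$, $\sdiv\bv = q$ and $\norm*{\bv}_{1,\Omega}\lesssim \norm*{q}_{0,\Omega}$; such a bounded right inverse of the divergence exists because $\Gamma_N$ has positive surface measure (respectively, because $q$ is mean free when $\Gamma_D = \Gamma$).

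The second step relies on the pointwise identity valid for any symmetric tensor $\btau$,
\[
  \tfrac{1}{d}\inner*{\tr\btau,\sdiv\bv} = \inner*{\btau,\beps(\bv)} - \inner*{\btau^\tD,\beps(\bv)^\tD},
\]
which follows from the $L^2(\Omega,\bbM)$-orthogonal splittings $\btau = \btau^\tD + \tfrac{1}{d}(\tr\btau)\bI$ and $\beps(\bv) = \beps(\bv)^\tD + \tfrac{1}{d}(\sdiv\bv)\bI$. Applying it with $\btau = \bsig-\bsig_h$ and integrating $\inner*{\bsig-\bsig_h,\beps(\bv)}$ by parts element by element, the element-boundary contributions collapse onto $\cF^*_h$: the interior-face terms combine because $\bv$ is single-valued and $\bsig\in H(\bdiv,\Omega,\bbS)$ has vanishing interior jumps, the terms on $\Gamma_D$ vanish since $\bv|_{\Gamma_D} = \mathbf 0$, and those on $\Gamma_N$ vanish since $\bsig\bn = \mathbf 0$ there, so that in fact $\jump{\bsig-\bsig_h} = -\jump{\bsig_h}$ on $\cF^*_h$. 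This gives
\[
  \tfrac{1}{d}\norm*{q}^2_{0,\Omega} = -\inner*{\bdiv_h(\bsig-\bsig_h),\bv} + \inner*{\jump{\bsig-\bsig_h},\bv}_{\cF^*_h} - \inner*{(\bsig-\bsig_h)^\tD,\beps(\bv)^\tD}.
\]

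The third step is to bound the three terms by Cauchy--Schwarz against the weights defining $\vertiii{\cdot}$ in \eqref{eq:triple-norm}. For the first, $\norm*{\bdiv_h(\bsig-\bsig_h)}_{0,\Omega}\le \kappa_-^{-1/2}\norm*{\kappa^{1/2}\bdiv_h(\bsig-\bsig_h)}_{0,\Omega}\le \kappa_-^{-1/2}\vertiii{\bsig-\bsig_h}$. For the second, split it as $\norm*{\gamma_\cF^{-1/2}h_\cF^{-1/2}\jump{\bsig-\bsig_h}}_{0,\cF^*_h}$ times $\norm*{\gamma_\cF^{1/2}h_\cF^{1/2}\bv}_{0,\cF^*_h}$; the first factor is $\le \vertiii{\bsig-\bsig_h}$, while $\gamma_\cF\le \kappa_-^{-1}$ together with the multiplicative trace inequality \eqref{multiplicativetrace} and shape regularity give $\norm*{\gamma_\cF^{1/2}h_\cF^{1/2}\bv}_{0,\cF^*_h}\le \kappa_-^{-1/2}\big(\sum_{K\in\cT_h}h_K\norm*{\bv}^2_{0,\partial K}\big)^{1/2}\lesssim \kappa_-^{-1/2}\norm*{\bv}_{1,\Omega}$. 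For the third, $\norm*{(\bsig-\bsig_h)^\tD}_{0,\Omega}\le \sqrt{2}\,a(\bsig-\bsig_h,\bsig-\bsig_h)^{1/2}\le \sqrt{2}\,\vertiii{\bsig-\bsig_h}$ and $\norm*{\beps(\bv)^\tD}_{0,\Omega}\le \norm*{\bv}_{1,\Omega}$. Collecting these bounds, $\tfrac{1}{d}\norm*{q}^2_{0,\Omega}\lesssim \max\{1,\kappa_-^{-1/2}\}\,\vertiii{\bsig-\bsig_h}\,\norm*{q}_{0,\Omega}$, whence $\norm*{p-p_h}_{0,\Omega} = \tfrac{1}{d}\norm*{q}_{0,\Omega}\lesssim \max\{1,\kappa_-^{-1/2}\}\,\vertiii{\bsig-\bsig_h}$, and the statement follows by inserting the estimate \eqref{asympSD} of Theorem~\ref{conv}; the constant so produced involves $\kappa_+$ and $\kappa_-$ (essentially through the ratio $\kappa_+/\kappa_-$) but neither $h$ nor $\mu$.

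I expect the first step to be the main obstacle. In contrast with the velocity post-processing of Corollary~\ref{postV}, whose error sits inside $\vertiii{\bsig-\bsig_h}$ almost by construction, the pressure error is governed only by $\tr(\bsig-\bsig_h)$, which the energy norm does \emph{not} see; supplying the duality argument --- equivalently, a broken Poincar\'e--Friedrichs inequality on $\mathcal{X}^k(h)$ --- and carrying out carefully the element-wise integration by parts with the nonconforming $\bsig_h$ under mixed boundary conditions is the delicate part, alongside monitoring the coefficient dependence so that $\mu$ disappears and only $\kappa_+/\kappa_-$ remains.
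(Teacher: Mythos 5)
Your proof is correct and follows essentially the same route as the paper: both reduce the pressure error to a duality argument for $\tr(\bsig-\bsig_h)$ via the surjectivity of the divergence on $H^1_D(\Omega,\R^d)$ (you phrase it as a bounded right inverse, the paper as the inf-sup condition of \cite[Lemma 53.9]{ern1}, which are equivalent), followed by the same elementwise integration by parts, the same Cauchy--Schwarz/multiplicative-trace bounds against the components of $\vertiii{\cdot}$, and insertion of \eqref{asympSD}. The coefficient bookkeeping you carry out also matches the paper's claimed dependence on $\kappa_+/\kappa_-$ and independence of $\mu$.
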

\begin{proof}
We notice that $p - p_h = \tfrac{1}{d} \tr(\bsig_h - \bsig) \in Q$ where
\[
  Q:= \begin{cases}
  	L^2(\Omega) & \text{if $\Gamma_D \neq \Gamma$},
  	\\
  	L^2_*(\Omega) = \set*{\phi \in L^2(\Omega):\ \inner*{ \phi, 1} = 0} & \text{if $\Gamma_D = \Gamma$}.
  \end{cases}
\]
It follows from the well-known inf-sup condition (see for example \cite[Lemma 53.9]{ern1})   
\[
  \sup_{\bv \in H^1_{D}(\Omega,\R^d)} \frac{\inner*{\phi, \sdiv \bv}}{\norm*{\bv}_{1,\Omega}} \geq \delta \norm{\phi}_{0,\Omega},\quad \forall \phi \in Q,
\]
that
\begin{equation}\label{FJS}
	 \delta  \norm{p - p_h}_{0,\Omega} \leq \sup_{\bv \in H^1_D(\Omega,\R^d)} \frac{\inner*{p - p_h, \sdiv \bv}}{\norm*{\bv}_{1,\Omega}}.
\end{equation}
Using an elementwise integration by parts formula gives
\begin{align*}
\inner*{p - p_h, \sdiv \bv} &= -\tfrac{1}{d} \inner*{\tr (\bsig - \bsig_h) \bI, \beps( \bv)}
= \inner*{(\bsig - \bsig_h)^\tD , \beps( \bv)} - \inner*{\bsig - \bsig_h , \beps( \bv)}
\\[1ex]
& 
= \inner*{(\bsig - \bsig_h)^\tD , \beps( \bv)} + \inner*{\bdiv_h (\bsig - \bsig_h) , \bv}
- \inner*{\bv, \jump{\bsig - \bsig_h}}_{\cF^*_h},\quad \forall \bv \in H^1_D(\Omega, \R^d).
\end{align*}
Hence, using the Cauchy--Schwarz inequality 
\begin{align*}
  \inner*{p - p_h, \sdiv \bv} \leq \norm{(\bsig - \bsig_h)^\tD}_{0,\Omega} \norm{\beps( \bv)}_{0,\Omega} &+ \norm{\bdiv_h(\bsig - \bsig_h)}_{0,\Omega} \norm{\bv}_{0,\Omega}, 
  + \norm{h_\cF^{-\frac{1}{2}}\jump{\bsig - \bsig_h}}_{0,\cF^*_h} \norm{h_\cF^{\frac{1}{2}}\bv}_{0,\cF^*_h},
\end{align*}
and the multiplicative trace inequality \eqref{multiplicativetrace} to estimate the term $\norm{h_\cF^{\frac{1}{2}}\bv}_{0,\cF^*_h}$, we deduce that  
\[
  \inner*{p - p_h, \sdiv \bv} \lesssim \Big(\norm{(\bsig - \bsig_h)^\tD}_{0,\Omega} +  \norm{\bdiv_h(\bsig - \bsig_h)}_{0,\Omega} + \norm{h_\cF^{-\frac{1}{2}}\jump{\bsig - \bsig_h}}_{0,\cF^*_h}\Big) \norm{\bv}_{1,\Omega}.
\]
Plugging the forgoing estimate in \eqref{FJS} yields
\[
  \norm{p - p_h}_{0,\Omega} \lesssim  \Big(\norm{(\bsig - \bsig_h)^\tD}_{0,\Omega} +  \norm{\bdiv_h(\bsig - \bsig_h)}_{0,\Omega} + \norm{h_\cF^{-\frac{1}{2}}\jump{\bsig - \bsig_h}}_{0,\cF^*_h}\Big),
\]
and it follows from \eqref{asympSD} that there exists a constant $C>0$ independent of $h$, but depending on $\kappa_+/\kappa_-$ such that
\[
   \norm{p - p_h}_{0,\Omega} \leq  C 
h^{\min\{r,k\}} \left( \sum_{j=1}^J   \norm*{ \bsig}^2_{r+1, \Omega_j} \right)^{1/2}, 
\]
which gives the result.
\end{proof}

We aim now to obtain a velocity field  $\bu_h^*\simeq \bu$ that preserves exactly the incompressibility condition $\sdiv \bu = 0$ in $\Omega$. The computational cost for such an enhancement is more demanding. For $k\geq 2$, it is achieved by solving the following elliptic problem in mixed form: Find  $\bu_h^*\in H(\sdiv,\Omega)\cap \cP_{k-1}(\cT_h, \R^d)$ and $\lambda_h\in \cP_{k-2}(\cT_h)$ such that 
\begin{align}\label{div0}
\begin{split}
	\inner*{\bu_h^*, \bv} + \inner*{\lambda_h, \sdiv \bv} &= \inner*{\bu_h, \bv}\quad \forall \bv \in H(\sdiv,\Omega)\cap \cP_{k-1}(\cT_h, \R^d),
	\\[1ex]
	\inner*{\sdiv \bu_h^*, \eta} &= 0\quad \forall \eta \in \cP_{k-2}(\cT_h). 
	\end{split}
\end{align}
Clearly, $ \sdiv \bu_h^* = 0$ in $\Omega$. We need now to estimate the error $\bu - \bu_h^*$ in the $L^2(\Omega,\R^d)$-norm.  

\begin{lemma}\label{postV2}
{Let $\bsig$ and $\bsig_h$ be the solutions of problems \eqref{mvf} and  \eqref{MDG}, respectively.}
If $\bsig\in  H^{r+1}(\cup_j\Omega_j,\bbM)$ and $\bF\in  H^{r}(\cup_j\Omega_j,\R^d)$ with $r\geq 1$,  there exists a constant $C>0$ independent of $h$,  $\kappa$, {and $\mu$}  such that, for all $\mathtt{a} \geq  \mathtt{a}_0$,  
\[
  \norm{\bu - \bu^*_h}_{0,\Omega} \leq C\, {\max\{1, \frac{\kappa_+}{\mu}\}} \,  h^{\min\{r,k\}}
\left( \sum_{j=1}^J    \norm*{ \bsig}^2_{r+1, \Omega_j} + \norm*{ \bF}^2_{r, \Omega_j} \right)^{1/2},
\]
{where $\bu_h^*$ is obtained by solving the auxiliary problem \eqref{div0}. }
\end{lemma}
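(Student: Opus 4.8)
The plan is to read off from \eqref{div0} that $\bu_h^*$ is simply an $L^2$-projection of the elementwise post-process $\bu_h$ onto a \emph{discretely divergence-free} space, and then to run a one-line best-approximation estimate. Problem \eqref{div0} is a classical well-posed mixed scheme: the pair $V_h:=\cP_{k-1}(\cT_h,\R^d)\cap H(\sdiv,\Omega)$, $M_h:=\cP_{k-2}(\cT_h)$ is the Brezzi--Douglas--Marini pair, whose discrete inf--sup condition follows from Fortin's argument using $\Pi^{\texttt{BDM}}_h$ and the commuting property \eqref{commutingV} (applied row-wise to vectors), while the leading form is the $L^2(\Omega,\R^d)$ inner product and hence coercive on the kernel. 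Since $\sdiv V_h\subseteq M_h$, the second equation of \eqref{div0} forces $\sdiv\bu_h^*=\mathbf 0$ in $\Omega$, so $\bu_h^*\in Z_h:=\set*{\bv\in V_h:\ \sdiv\bv=\mathbf 0}$; testing the first equation with $\bv\in Z_h$ then gives $\inner*{\bu_h^*-\bu_h,\bv}=0$, i.e. $\bu_h^*$ is the $L^2(\Omega,\R^d)$-orthogonal projection $P_{Z_h}\bu_h$.

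Next I would place the exact velocity inside $Z_h$ after interpolation. From the consistency argument, $\bu=\frac{\kappa}{\mu}(\bdiv\bsig+\bF)\in H^1(\Omega,\R^d)$ and $\sdiv\bu=0$; since $\kappa$ is piecewise constant on $\{\Omega_j\}$ and $\bsig\in H^{r+1}(\cup_j\Omega_j,\bbM)$, $\bF\in H^{r}(\cup_j\Omega_j,\R^d)$, we get $\bu\in H^{r}(\cup_j\Omega_j,\R^d)\cap H(\sdiv,\Omega)$ with $r\ge 1>\tfrac12$, so $\Pi^{\texttt{BDM}}_h\bu$ (with $m=k-1\ge 1$) is well defined, and \eqref{commutingV} yields $\sdiv\Pi^{\texttt{BDM}}_h\bu=Q^{k-2}_h\sdiv\bu=\mathbf 0$, that is, $\Pi^{\texttt{BDM}}_h\bu\in Z_h$.

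It remains to combine. Writing $\bu-\bu_h^*=(\bu-P_{Z_h}\bu)+P_{Z_h}(\bu-\bu_h)$, the two summands are $L^2(\Omega,\R^d)$-orthogonal, the first being a best approximation from $Z_h$ bounded by $\norm{\bu-\Pi^{\texttt{BDM}}_h\bu}_{0,\Omega}$, so that
\[
\norm{\bu-\bu_h^*}_{0,\Omega}^2\;\le\;\norm{\bu-\Pi^{\texttt{BDM}}_h\bu}_{0,\Omega}^2+\norm{\bu-\bu_h}_{0,\Omega}^2 .
\]
The second term is estimated directly by Corollary~\ref{postV}; for the first I would invoke \eqref{asympS} with $m=k-1$ together with $\norm{\bu}_{r,\Omega_j}=\tfrac{\kappa_j}{\mu}\norm{\bdiv\bsig+\bF}_{r,\Omega_j}\lesssim\tfrac{\kappa_+}{\mu}\big(\norm{\bsig}_{r+1,\Omega_j}+\norm{\bF}_{r,\Omega_j}\big)$, giving a contribution of order $\tfrac{\kappa_+}{\mu}h^{\min\{r,k\}}$. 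Adding the two bounds produces the asserted estimate, the factor $\max\{1,\kappa_+/\mu\}$ being kept only to majorize $\kappa_+/\mu$ uniformly with Corollaries~\ref{postV}--\ref{postP}. I expect the only mildly delicate points to be the bookkeeping that identifies $\bu_h^*$ with $P_{Z_h}\bu_h$ — which hinges on $\sdiv V_h\subseteq M_h$ — and the verification of the regularity of $\bu$ needed for $\Pi^{\texttt{BDM}}_h\bu$ and its commuting diagram; everything else is routine.
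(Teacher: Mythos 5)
Your argument is correct and lands on the same two error contributions as the paper --- $\norm{\bu-\Pi^{\texttt{BDM}}_h\bu}_{0,\Omega}$ and $\norm{\bu-\bu_h}_{0,\Omega}$, the first handled by \eqref{asympS} with the regularity $\bu=\frac{\kappa}{\mu}(\bdiv\bsig+\bF)\in H^r(\cup_j\Omega_j,\R^d)\cap H(\sdiv,\Omega)$ and the second by Corollary~\ref{postV} --- but it gets there by a genuinely different and more elementary route. The paper introduces the auxiliary continuous saddle-point problem \eqref{div0a} whose exact solution is $(\bu,\mathbf 0)$, discretizes it with the BDM pair to obtain \eqref{div0b}, applies C\'ea's estimate on the discrete kernel, and then invokes Strang's lemma to absorb the perturbation of the datum from $\bu$ to $\bu_h$; the constants are thus inherited from the Babu\v{s}ka--Brezzi stability framework. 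You instead exploit $\sdiv V_h\subseteq M_h$ to conclude $\sdiv\bu_h^*=\mathbf 0$, test the first equation of \eqref{div0} against exactly divergence-free discrete fields to eliminate the multiplier, and identify $\bu_h^*=P_{Z_h}\bu_h$ as an $L^2$-orthogonal projection; the Pythagorean splitting $\bu-\bu_h^*=(\bu-P_{Z_h}\bu)+P_{Z_h}(\bu-\bu_h)$ together with $\Pi^{\texttt{BDM}}_h\bu\in Z_h$ (via the commuting property \eqref{commutingV}) then gives the bound with constant $1$ in front of each term. Your route buys sharper, inf-sup-free constants and dispenses with Strang's lemma entirely (the inf-sup condition is only needed to produce the multiplier $\lambda_h$, not for the velocity estimate); the paper's route is more modular, in that it would survive replacing the $L^2$ inner product by a more general coercive form for which the projection identification fails. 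The two delicate points you flag --- $\sdiv V_h\subseteq M_h$ and the regularity needed for $\Pi^{\texttt{BDM}}_h\bu$ --- are exactly the ones the paper's proof also relies on, and you verify both.
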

\begin{proof}
	We begin by considering the following auxiliary problem: 
Find  $\bw\in H(\sdiv,\Omega)$ and $\lambda\in L^2(\Omega)$ such that 
\begin{align}\label{div0a}
\begin{split}
	\inner*{\bw, \bv} + \inner*{\lambda, \sdiv \bv} &= \inner*{\bu, \bv}\quad \forall \bv \in H(\sdiv,\Omega),
	\\[1ex]
	\inner*{\sdiv \bw, \eta} &= 0\quad \forall \eta \in L^2(\Omega). 
	\end{split}
\end{align}
We denote by $V$ the kernel of the bilinear form $H(\sdiv,\Omega) \times L^2(\Omega) \ni  (\bv, \eta)\mapsto \inner*{\eta, \sdiv \bv}$, that is 
\[
V := \set*{ \bv\in H(\sdiv,\Omega): \  \sdiv \bv =  0  \ \text{in $\Omega$}}.
\]
The fact that $(\bv,\bv) = \norm{\bv}_{0,\Omega}^2 = \norm{\bv}^2_{H(\sdiv, \Omega)}$ for all $\bv \in V$ and the well-known inf-sup condition (cf. \cite{BoffiBrezziFortinBook})
\begin{equation*}\label{discInfSup00}
\sup_{\btau \in H(\sdiv, \Omega)} 
\frac{ \inner*{\sdiv \bv, \eta}}{\norm{\btau}_{H(\sdiv,\Omega)}} \geq \beta 
\norm{\eta}_{0,\Omega} ,\quad \forall \eta \in L^2(\Omega),
\end{equation*}
permit us to apply the Babu\v{s}ka--Brezzi theory to deduce that  problem \eqref{div0a} is well-posed. Moreover, it can easily be seen that its unique solution is given by $\bw = \bu$ and $\lambda = 0$.

We recall that the BDM-mixed finite element pair $\set*{H(\sdiv, \Omega)\cap \cP_{k-1}(\cT_h, \R^d), \cP_{k-2}(\cT_h)}$ satisfies the discrete inf-sup condition 
\begin{equation*}\label{discInfSup}
\sup_{\btau \in H(\sdiv, \Omega)\cap \cP_{k-1}(\cT_h, \R^d)} 
\frac{ \inner*{\sdiv \bv, \eta}}{\norm{\btau}_{H(\sdiv,\Omega)}} \geq \beta' 
\norm{\eta}_{0,\Omega} ,\quad \forall \eta \in \cP_{k-2}(\cT_h), \quad k \geq 2.
\end{equation*}
Moreover, we notice that discrete kernel $V_h$ of the bilinear form $H(\sdiv, \Omega)\cap \cP_{k-1}(\cT_h, \R^d) \times \cP_{k-2}(\cT_h) \ni  (\bv, \eta)\mapsto \inner*{\eta, \sdiv \bv}$ is a subspace of $V$ since
\[
V_h := \set*{ \bv_h\in H(\sdiv, \Omega)\cap \cP_{k-1}(\cT_h, \R^d): \  \sdiv \bv_h =  0  \ \text{in $\Omega$}}.
\]

Hence, the Galerkin method based on the BDM-element and defined by:
Find  $\bw_h\in H(\sdiv, \Omega)\cap \cP_{k-1}(\cT_h, \R^d)$ and $\lambda_h\in \cP_{k-2}(\cT_h)$ such that 
\begin{align}\label{div0b}
\begin{split}
	\inner*{\bw_h, \bv} + \inner*{\lambda_h, \sdiv \bv} &= \inner*{\bu, \bv}\quad \forall \bv \in H(\sdiv, \Omega)\cap \cP_{k-1}(\cT_h, \R^d),
	\\[1ex]
	\inner*{\sdiv \bw_h, \eta} &= 0\quad \forall \eta \in \cP_{k-2}(\cT_h),
	\end{split}
\end{align}
is stable, convergent and we have the C\'ea estimate (recall that $\bw=\bu$ and $\lambda = 0$)
\begin{align*}
	\norm{\bu - \bw_h}_{0,\Omega}  = \norm{\bu - \bw_h}_{H(\sdiv, \Omega)}  \lesssim  \norm{\bu - \Pi_h^{\texttt{BDM}} \bu}_{H(\sdiv, \Omega)} =  \norm{\bu - \Pi_h^{\texttt{BDM}} \bu}_{0, \Omega}.
\end{align*}

We notice now that problem~\eqref{div0} is none other than a discretization of problem~\eqref{div0a} obtained from the Galerkin method \eqref{div0b} after replacing the right-hand side $\inner*{\bu, \bv}$ by $\inner*{\bu_h, \bv}$ for all $\bv\in H(\sdiv, \Omega)\cap \cP_{k-1}(\cT_h, \R^d)$. Hence, by virtue of Strang's Lemma, it immediately holds
\begin{align*}
	\norm{\bu - \bw_h}_{0,\Omega}=\norm{\bu - \bu^*_h}_{H(\sdiv, \Omega)}  &\lesssim 
	\norm{\bu - \Pi^{BDM}_h \bu}_{0, \Omega}  + \norm{\bu - \bu_h}_{0,\Omega}.
\end{align*}
Finally, we deduce from \eqref{asympS} and Corollary~\ref{postV} that  there exists a constant $C>0$ independent of $h$,  $\kappa$, {and $\mu$} such that 
\[
  \norm{\bu - \bu^*_h}_{0,\Omega} \leq C\, {\max\biggl\{1, \frac{\kappa_+}{\mu}\biggr\}} \,  h^{\min\{r,k\}}
\left( \sum_{j=1}^J    \norm*{ \bsig}^2_{r+1, \Omega_j} + \norm*{ \bF}^2_{r, \Omega_j} \right)^{1/2},
\]
and the result follows.
\end{proof}

\section{$L^2$-error estimates for the stress}\label{sec:L2}
The analysis of this section is restricted to the case $\Gamma_D = \Gamma$, so that $\theta = 1$, $X = H(\bdiv,\Omega, \bbS)$ and $\cF_h^* = \cF_h^0$. The deduction of error estimates for the stress in the $L^2$-norm relies on the construction of adequate approximations of the exact solution $\bsig\in H(\bdiv, \Omega, \bbS)$ and of the discrete solution $\bsig_h\in \cP_k(\cT_h, \bbS)$ in the space $H(\bdiv, \Omega, \bbS)\cap \cP_k(\cT_h, \bbS)$. 

One of the tools that are needed to achieve this is the averaging operator $\mathcal{A}_h:\, \cP_k(\cT_h,\bbM) \to \cP_k(\cT_h,\bbM)\cap H(\bdiv,\Omega,\bbM)$ defined on each $K\in \cT_h$ and for any $\btau\in \cP_k(\cT_h,\bbM)$, by the conditions
\begin{subequations}
\begin{eqnarray}
 \int_F (\mathcal{A}_h\btau) \bn_K \cdot \boldsymbol{\phi} & = & \int_F \mean{\btau}_F \bn_K \cdot \boldsymbol{\phi}
\quad \forall \boldsymbol{\phi}\in \cP_k(F,\R^d) \quad\forall F\in \cF(K), \label{GDFa}
 \\
\int_K \mathcal{A}_h\btau : \bnabla \bv &= & \int_K \btau : \bnabla \bv \quad\forall \bv\in \cP_{k-1}(K,\R^d),\label{GDFc}
\\
\int_K \mathcal{A}_h\btau : \boldsymbol{\xi} &= & \int_K \btau : \boldsymbol{\xi} \quad\forall \boldsymbol{\xi}\in \cP_k(K,\bbM),\quad \text{$\bdiv \boldsymbol{\xi} = \boldsymbol{0}$ in $K$},\quad \text{$\boldsymbol{\xi}\bn = \boldsymbol{0}$ on $\partial K$}. \label{GDFd}
\end{eqnarray}\end{subequations}

\begin{lemma}\label{propC}
The projector $\mathcal{A}_h:\, \cP_k(\cT_h,\bbM) \to \cP_k(\cT_h,\bbM)\cap H(\bdiv,\Omega,\bbM)$ is uniquely characterized by the conditions \eqref{GDFa}-\eqref{GDFd} and it satisfies 
\begin{equation}\label{L2Ph}
   \norm*{\btau- \mathcal{A}_h \btau}_{0,\Omega}
 \leq C  h
 \norm*{h_{\cF}^{-\frac{1}{2}} \jump{\btau}}_{0,\cF^0_h}\quad \forall \btau \in \cP_k(\cT_h,\bbM),
\end{equation}
 with $C>0$ independent of $h$.
\end{lemma}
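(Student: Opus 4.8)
The plan is to prove, in turn, that conditions \eqref{GDFa}--\eqref{GDFd} define $\mathcal{A}_h\btau$ element by element and produce an $H(\bdiv)$-conforming tensor; that $\mathcal{A}_h$ leaves $H(\bdiv)$-conforming tensors unchanged, so that $\btau-\mathcal{A}_h\btau$ is governed solely by the inter-element jumps of $\btau$; and finally that \eqref{L2Ph} follows from a scaling argument on the reference element. For the first point, note that all three conditions act row by row: $\btau:\bnabla\bv=\sum_i\btau_i\cdot\nabla v_i$, and the constraints $\bdiv\boldsymbol{\xi}=\boldsymbol 0$, $\boldsymbol{\xi}\bn=\boldsymbol 0$ and the normal trace $\btau\bn$ are all computed row-wise. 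It therefore suffices to establish unisolvence on $\cP_k(K,\R^d)$ of the functionals given by the face normal moments $\bv\mapsto\int_F\bv\cdot\bn_K\,\phi$ ($\phi\in\cP_k(F)$, $F\in\cF(K)$), the gradient moments $\bv\mapsto\int_K\bv\cdot\nabla q$ ($q\in\cP_{k-1}(K)$), and the moments $\bv\mapsto\int_K\bv\cdot\boldsymbol{\psi}$ over divergence-free $\boldsymbol{\psi}\in\cP_k(K,\R^d)$ with $\boldsymbol{\psi}\cdot\bn=\boldsymbol 0$ on $\partial K$. A standard dimension count (as for the BDM$_k$ element, cf.\ \cite{BoffiBrezziFortinBook}) shows these functionals are exactly $\dim\cP_k(K,\R^d)$ in number, so it is enough to check that a $\bv\in\cP_k(K,\R^d)$ annihilated by all of them vanishes: the face moments force $\bv\cdot\bn=\boldsymbol 0$ on $\partial K$, so integration by parts in the gradient moments gives $\int_K(\sdiv\bv)\,q=0$ for every $q\in\cP_{k-1}(K)\ni\sdiv\bv$, whence $\sdiv\bv=0$; but then $\bv$ itself is an admissible test field $\boldsymbol{\psi}$, so $\int_K\bv\cdot\bv=0$ and $\bv=\boldsymbol 0$. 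This proves that \eqref{GDFa}--\eqref{GDFd} determine $\mathcal{A}_h\btau|_K$ uniquely. For an interior face $F=\bar K\cap\bar K'$, condition \eqref{GDFa} written from $K$ and from $K'$, with the full test space $\cP_k(F,\R^d)$, identifies the normal trace of $\mathcal{A}_h\btau$ with $\mean{\btau}_F\bn_K$ and with $\mean{\btau}_F\bn_{K'}=-\mean{\btau}_F\bn_K$, respectively; hence the normal components of $\mathcal{A}_h\btau$ match across $F$ and $\mathcal{A}_h\btau\in H(\bdiv,\Omega,\bbM)$.

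If $\btau\in\cP_k(\cT_h,\bbM)\cap H(\bdiv,\Omega,\bbM)$ then $\mean{\btau}_F\bn_K=\btau_K\bn_K$ on every face, so $\btau$ satisfies the very conditions characterizing $\mathcal{A}_h\btau$ and, by the uniqueness just shown, $\mathcal{A}_h\btau=\btau$. Consequently, for an arbitrary $\btau\in\cP_k(\cT_h,\bbM)$ the local error $\boldsymbol{\rho}_K:=(\btau-\mathcal{A}_h\btau)|_K\in\cP_k(K,\bbM)$ has vanishing moments against $\bnabla\bv$ ($\bv\in\cP_{k-1}(K,\R^d)$) and against divergence-free zero-normal-trace tensors — by \eqref{GDFc}--\eqref{GDFd} — while, by \eqref{GDFa}, its normal trace on each $F\in\cF(K)$ equals $(\btau_K-\mean{\btau}_F)\bn_K$, which is $\tfrac12\jump{\btau}_F$ if $F\in\cF_h^0$ and $\boldsymbol 0$ if $F\in\cF_h^\partial$ (by the convention for $\mean{\cdot}$). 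Thus $\boldsymbol{\rho}_K$ is the unique tensor of $\cP_k(K,\bbM)$ with these prescribed normal traces and vanishing interior moments, and it depends only on the jumps of $\btau$ on the faces of $K$.

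It remains to estimate $\norm{\boldsymbol{\rho}_K}_{0,K}$. Map $K$ affinely onto the reference simplex $\hat K$ and transport $\boldsymbol{\rho}_K$ row-wise by the contravariant Piola transform to $\hat{\boldsymbol{\rho}}\in\cP_k(\hat K,\bbM)$. Shape-regularity gives $\norm{\boldsymbol{\rho}_K}_{0,K}\lesssim h_K^{(2-d)/2}\norm{\hat{\boldsymbol{\rho}}}_{0,\hat K}$, and the flux-preserving property of the Piola transform gives, for each face $F\mapsto\hat F$, $\norm{\hat{\boldsymbol{\rho}}\hat\bn}_{0,\hat F}\lesssim h_K^{(d-1)/2}\norm{\boldsymbol{\rho}_K\bn_K}_{0,F}$. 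On the fixed space $\cP_k(\hat K,\bbM)$, a tensor with vanishing interior moments is determined by its tuple of normal traces, and the corresponding inverse map is bounded with a constant uniform over $K$ by shape-regularity, so $\norm{\hat{\boldsymbol{\rho}}}_{0,\hat K}\lesssim\sum_{\hat F}\norm{\hat{\boldsymbol{\rho}}\hat\bn}_{0,\hat F}$. Chaining these bounds and invoking the previous paragraph yields
\[
\norm{\boldsymbol{\rho}_K}_{0,K}\ \lesssim\ h_K^{1/2}\Big(\textstyle\sum_{F\in\cF(K)\cap\cF_h^0}\norm{\jump{\btau}_F}_{0,F}^2\Big)^{1/2}.
\]
Squaring, summing over $K\in\cT_h$, and using that each interior face belongs to exactly two elements and that $h_F\sim h_K$ whenever $F\subset\partial K$, we obtain $\norm{\btau-\mathcal{A}_h\btau}_{0,\Omega}^2\lesssim\sum_{F\in\cF_h^0}h_F\norm{\jump{\btau}_F}_{0,F}^2\le h^2\norm{h_{\cF}^{-\frac12}\jump{\btau}}_{0,\cF_h^0}^2$, which is \eqref{L2Ph}.

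The main obstacle is the unisolvence in the first step: one must be sure that \eqref{GDFd} supplies precisely the complement of $\bnabla\cP_{k-1}(K,\R^d)$ inside the divergence-free polynomials with vanishing normal trace, so that the number of degrees of freedom matches $\dim\cP_k(K,\R^d)$. The self-testing argument sketched above keeps injectivity transparent, but the matching dimension count is the routine-yet-unavoidable part; once it is in place, the remaining steps — the reproduction property and the scaling estimate — are classical.
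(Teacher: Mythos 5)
Your argument is correct, but note that the paper itself does not prove this lemma: it simply cites \cite[Proposition 5.2]{MMT}, where the same averaging operator (built from BDM-type degrees of freedom applied row-wise) is constructed and estimated. What you have written is in effect a self-contained reconstruction of that cited proof, and all the essential ingredients are in place: the row-wise reduction to the classical $\mathrm{BDM}_k$ unisolvence (face normal moments force $\bv\cdot\bn=0$ on $\partial K$, the gradient moments then force $\sdiv\bv=0$, and \eqref{GDFd} lets $\bv$ test itself), the matching of normal traces across interior faces giving $H(\bdiv)$-conformity, the identification of the normal trace of $\btau-\mathcal{A}_h\btau$ with $\tfrac12\jump{\btau}_F$ on interior faces and $\boldsymbol{0}$ on boundary faces, and the Piola scaling with exponents $h_K^{(2-d)/2}$ and $h_K^{(d-1)/2}$ combining to the correct factor $h_K^{1/2}$. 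The two points you rightly flag as needing care are exactly the ones that make the result nontrivial: (i) the dimension count showing that \eqref{GDFa}--\eqref{GDFd} are exactly $\dim\cP_k(K,\R^d)$ independent functionals per row (this is the standard $\mathrm{BDM}_k$ count, e.g.\ $(d+1)\dim\cP_k(F)+\dim\cP_{k-1}(K)-1+\dim W=\dim\cP_k(K,\R^d)$, needed for \emph{existence}, not just uniqueness, since the face data come from $\mean{\btau}$ while the interior data come from $\btau|_K$); and (ii) the uniformity of the reference-element norm equivalence, where one should observe that the third set of moments is not exactly preserved by the Piola map (it picks up the constant matrix $\tfrac{1}{\det B}B^{\mt}B$), so the "inverse DOF map" constant depends on $B$ only through a quantity confined to a compact set by shape regularity -- which is precisely the justification you give. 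The practical upshot of your route versus the paper's is that yours makes the lemma self-contained, at the cost of re-deriving a known construction; the paper buys brevity by outsourcing it.
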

\begin{proof}
See \cite[Proposition 5.2]{MMT}.
\end{proof}

A combination of \eqref{L2Ph} and \eqref{asympSD} shows that, under sufficient regularity assumption on the exact solution $\bsig\in H(\bdiv, \Omega, \bbS)$, $\mathcal{A}_h\bsig_h-\bsig_h$ converges to zero in the $L^2$-norm. However, it is clear that the operator $\mathcal{A}_h$ does not preserve symmetry. To remedy this drawback, we follow \cite{falk, guzman, Qian, wang} and use a symmetrization procedure that requires the stability the Scott--Vogelius element \cite{vogelius} for the Stokes problem. We refer to \cite[Section 55.3]{ern1} for a detailed account on the conditions (on the mesh $\cT_h$ and on $k$) under which this stability property is guaranteed in the 2D and 3D cases. 

\begin{lemma}\label{stokes}
Assume that $\set*{\cP_{k+1}(\cT_h,\R^d)\cap H^1(\Omega,\R^d),  \cP_{k}(\cT_h,\R^d)}$ is a stable Stokes pair on the mesh $\cT_h$. Then, there exists a linear operator
\[
  \mathcal S_h:\, \cP_k(\cT_h,\bbM)\cap H(\bdiv,\Omega,\bbM)\to \cP_{k}(\cT_h,\bbS)\cap H(\bdiv,\Omega,\bbS),
\]
such that, for all $\btau_h \in \cP_k(\cT_h,\bbM)\cap H(\bdiv,\Omega,\bbM)$,
	\begin{enumerate}[label=\roman*)]
		\item $\bdiv ( \btau_h - \mathcal S_h \btau_h) = \mathbf 0$ in $\Omega$, 
		\item  and 
		$
  \norm*{\btau_h - \mathcal S_h \btau_h}_{0,\Omega}\lesssim  \norm*{\btau_h - \btau_h^{\mt}}_{0,\Omega}.
  $  
	\end{enumerate}
\end{lemma}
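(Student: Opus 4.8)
The plan is to construct $\mathcal{S}_h \btau_h$ by correcting the skew-symmetric part of $\btau_h$ with a divergence-free tensor built from a Stokes-type saddle-point problem. First I would decompose $\btau_h = \btau_h^{\mathrm{sym}} + \btau_h^{\mathrm{skw}}$ where $\btau_h^{\mathrm{sym}} = \tfrac{1}{2}(\btau_h + \btau_h^{\mt})$ and $\btau_h^{\mathrm{skw}} = \tfrac{1}{2}(\btau_h - \btau_h^{\mt})$; the target is to produce a symmetric, $H(\bdiv)$-conforming, divergence-free tensor $\bchi_h$ such that $\mathcal{S}_h\btau_h := \btau_h - \btau_h^{\mathrm{skw}} + \bchi_h$ is symmetric — equivalently, $\bchi_h$ should itself be symmetric — while having the same divergence as $\btau_h^{\mathrm{skw}}$, so that property i) holds and $\|\btau_h - \mathcal{S}_h\btau_h\|_{0,\Omega} = \|\btau_h^{\mathrm{skw}} - \bchi_h\|_{0,\Omega}$ is controlled by $\|\btau_h^{\mathrm{skw}}\|_{0,\Omega} = \tfrac{1}{2}\|\btau_h - \btau_h^{\mt}\|_{0,\Omega}$. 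So the real content is: given the skew tensor $\btau_h^{\mathrm{skw}} \in \cP_k(\cT_h,\bbM) \cap H(\bdiv,\Omega,\bbM)$, find a symmetric $\bchi_h \in \cP_k(\cT_h,\bbS)\cap H(\bdiv,\Omega,\bbS)$ with $\bdiv \bchi_h = \bdiv \btau_h^{\mathrm{skw}}$ and $\|\bchi_h\|_{0,\Omega} \lesssim \|\btau_h^{\mathrm{skw}}\|_{0,\Omega}$.

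The mechanism for this, following \cite{falk, guzman, Qian, wang}, is to use the stability of the Scott--Vogelius pair $\{\cP_{k+1}(\cT_h,\R^d)\cap H^1(\Omega,\R^d),\, \cP_k(\cT_h,\R^d)\}$. The key observation is that $\bdiv$ maps $\cP_{k+1}(\cT_h,\R^d)\cap H^1(\Omega,\R^d)$ onto (a codimension-one subspace of) $\cP_k(\cT_h,\R^d)$ with a stable right-inverse: for every $\bG \in \cP_k(\cT_h,\R^d)$ with $\int_\Omega \bG = 0$ there is $\bv_h$ in the velocity space with $\sdiv$ — here applied row-wise to build a matrix — such that $\bdiv$ of the resulting tensor equals $\bG$, with $\|\bv_h\|_{1,\Omega} \lesssim \|\bG\|_{0,\Omega}$; this is precisely the Stokes inf-sup/surjectivity statement. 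I would apply this row-wise to $\bG = \bdiv \btau_h^{\mathrm{skw}}$ (which has zero mean on each row since $\btau_h^{\mathrm{skw}}\bn$ pairs to zero appropriately, or one subtracts the mean), obtaining a vector potential whose symmetric gradient $\beps(\cdot)$ or an appropriate combination yields a \emph{symmetric} tensor $\bchi_h \in \cP_k(\cT_h,\bbS)$ that is automatically $H^1$ hence $H(\bdiv)$-conforming, with prescribed divergence and the norm bound $\|\bchi_h\|_{0,\Omega}\lesssim \|\bnabla \bv_h\|_{0,\Omega} \lesssim \|\bdiv\btau_h^{\mathrm{skw}}\|_{0,\Omega}$. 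The last inequality needs an inverse estimate bounding $\|\bdiv\btau_h^{\mathrm{skw}}\|_{0,\Omega} \lesssim h^{-1}\|\btau_h^{\mathrm{skw}}\|_{0,\Omega}$, which would spoil the clean bound in ii) — so in fact one must work more carefully, solving a \emph{discrete Stokes problem} posed with $\btau_h^{\mathrm{skw}}$ on the right-hand side in the $L^2$ pairing rather than prescribing the divergence pointwise, exactly as in the cited references; the Scott--Vogelius stability then gives the velocity bound directly in terms of $\|\btau_h^{\mathrm{skw}}\|_{0,\Omega}$ with no inverse power of $h$.

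Concretely, I would let $(\bw_h, q_h) \in (\cP_{k+1}(\cT_h,\R^d)\cap H^1_0(\Omega,\R^d)) \times (\cP_k(\cT_h,\R^d)\cap L^2_0)$ solve, row-wise, the Stokes system with datum $\btau_h^{\mathrm{skw}}$: $\inner*{\beps(\bw_h), \beps(\bv_h)} + \inner*{q_h, \sdiv\bv_h} = \inner*{\btau_h^{\mathrm{skw}}, \bnabla\bv_h}$ for all test velocities and $\inner*{\sdiv\bw_h, \eta_h} = 0$ for all $\eta_h$. Stability of the Scott--Vogelius pair gives $\|\beps(\bw_h)\|_{0,\Omega} + \|q_h\|_{0,\Omega} \lesssim \|\btau_h^{\mathrm{skw}}\|_{0,\Omega}$, and since the pressure space has the full $\cP_k(\cT_h,\R^d)$ range the second equation forces $\sdiv\bw_h = 0$ pointwise. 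Then I would set $\mathcal{S}_h\btau_h := \btau_h - \btau_h^{\mathrm{skw}} + \bchi_h$ with $\bchi_h := \btau_h^{\mathrm{skw}} - \big(\bnabla\bw_h - q_h \text{-related correction}\big)$, arranged so that $\bchi_h$ is symmetric and $\bdiv\bchi_h = \mathbf{0}$; the defining Stokes equations are exactly what make the difference $\btau_h^{\mathrm{skw}} - \bchi_h$ orthogonal to all divergence-free symmetric test tensors while the first equation controls its norm, yielding $\|\btau_h - \mathcal{S}_h\btau_h\|_{0,\Omega} = \|\btau_h^{\mathrm{skw}} - \bchi_h\|_{0,\Omega} \lesssim \|\btau_h^{\mathrm{skw}}\|_{0,\Omega} = \tfrac{1}{2}\|\btau_h - \btau_h^{\mt}\|_{0,\Omega}$. \textbf{The main obstacle} is getting the bookkeeping of the Stokes construction right so that the corrector is genuinely symmetric and $H(\bdiv)$-conforming \emph{simultaneously} with the divergence constraint; this is the delicate point where the Scott--Vogelius (rather than a generic Stokes-stable) pair is essential, since only the exact-divergence property $\sdiv \cP_{k+1}^{\mathrm{cont}} = \cP_k$ lets one realize the prescribed divergence within the conforming polynomial space without losing powers of $h$. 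Since this is carried out in detail in \cite{falk, guzman, Qian, wang}, I would cite those references for the construction and verification of i) and ii), only recalling the Scott--Vogelius stability hypothesis that makes it work.
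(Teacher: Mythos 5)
The paper does not prove this lemma at all---its entire ``proof'' is the citation \emph{``See for example \cite[Lemma 5.2]{wang}''}---so your closing move of deferring to \cite{falk,guzman,Qian,wang} reproduces the paper's argument exactly, and to that extent the proposal is acceptable. Your identification of the Scott--Vogelius stability as the essential hypothesis is also correct.

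That said, the sketch you spend most of the proposal on drifts from the mechanism actually used in those references, and the point where you admit ``the main obstacle is getting the bookkeeping right'' is precisely where your formulation would not close. The standard construction does \emph{not} seek a symmetric corrector with prescribed divergence $\bdiv\bchi_h=\bdiv\btau_h^{\mathrm{skw}}$ (which, as you note, threatens an inverse estimate), nor does it solve a Stokes system with $\btau_h^{\mathrm{skw}}$ as an $L^2$ datum and then assemble $\bnabla\bw_h$ minus an unspecified ``$q_h$-related correction''---as written, $\bnabla\bw_h$ is neither symmetric nor divergence-free, so properties i) and ii) are not ``arranged'' but simply unverified. The actual construction dualizes the constraints: one sets $\mathcal S_h\btau_h:=\btau_h-\bchi_h$ where $\bchi_h$ is \emph{divergence-free by construction}, namely $\bchi_h=\operatorname{\mathbf{curl}}\bv_h$ row-wise with $\bv_h$ in the continuous velocity space $\cP_{k+1}(\cT_h,\R^d)\cap H^1(\Omega,\R^d)$, and the remaining requirement $\skw\bchi_h=\skw\btau_h$ reduces (in 2D) to $\sdiv\bv_h=\rho_h$ where $\rho_h\in\cP_k(\cT_h)$ is the scalar encoding $\skw\btau_h$. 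The Scott--Vogelius surjectivity of $\sdiv$ onto $\cP_k(\cT_h)\cap L^2_0(\Omega)$ with the bound $\norm{\bv_h}_{1,\Omega}\lesssim\norm{\rho_h}_{0,\Omega}$ then gives $\norm{\bchi_h}_{0,\Omega}\lesssim\norm{\bv_h}_{1,\Omega}\lesssim\norm{\skw\btau_h}_{0,\Omega}$ with no inverse estimate, while $\bchi_h\in\cP_k(\cT_h,\bbM)\cap H(\bdiv,\Omega,\bbM)$ and $\bdiv\bchi_h=\mathbf 0$ hold automatically because $\bchi_h$ is a curl of an $H^1$ field. Both i) and ii) then follow in one line. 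So: your instinct that the divergence must not be prescribed pointwise on the corrector is right, but the resolution is to prescribe the \emph{skew part} of a curl potential, not to reformulate the divergence constraint weakly.
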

\begin{proof}
	See for example \cite[Lemma 5.2]{wang}. 
\end{proof}

\begin{remark}
	The reason we are limiting the analysis of this section to the case $\Gamma_D = \Gamma$ is due to the fact that, {to the authors knowledge, the symmetrization procedure provided by operator $\mathcal S_h$ has only been addressed in the literature for homogeneous Dirichlet or Neumann boundary conditions.}
\end{remark}

The next result shows that, under sufficient regularity assumptions on the exact solution $\bsig$ of problem \eqref{mvf},   $\bsig_h - \mathcal S_h(\mathcal{A}_h \bsig_h)$ and $\bsig - \mathcal S_h(\bPi^{\texttt{BDM}}_h \bsig )$ converge to zero at  optimal order in the $L^2$-norm.  

\begin{corollary}\label{inter}
{Let $\bsig$ and $\bsig_h$ be the solutions of problems \eqref{mvf} and  \eqref{MDG}, respectively.}
Assume that the pair $\set*{\cP_{k+1}(\Omega,\R^d)\cap H^1(\Omega,\R^d),  \cP_{k}(\Omega,\R^d)}$ is  Stokes-stable on the mesh $\cT_h$. Then, if $\bsig\in  H^{r+1}(\cup_j\Omega_j,\bbM)$, with $r\geq 1$, there exists $C>0$ independent of $h$ {and $\mu$, but depending on $\kappa_+/\kappa_-$},   such that  
\begin{align}\label{asympSD0}
\begin{split}
\norm*{\bsig - \mathcal S_h(\bPi^{\texttt{BDM}}_h \bsig )}_{0,\Omega} + 
\norm*{\bsig_h - \mathcal S_h(\mathcal{A}_h \bsig_h)}_{0,\Omega}
   \leq C  \,  h^{\min\{r+1,k+1\}}
\left( \sum_{j=1}^J   \norm*{ \bsig}^2_{r+1, \Omega_j}\right)^{1/2},
\end{split}
\end{align}
for all $\mathtt{a} \geq \mathtt{a}_0$.
\end{corollary}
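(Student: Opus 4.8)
The plan is to bound each of the two terms in \eqref{asympSD0} separately, both by inserting a convenient intermediate quantity and invoking the properties of $\mathcal S_h$ from Lemma~\ref{stokes} together with the approximation estimates already at our disposal. For the first term, I would write
\[
\bsig - \mathcal S_h(\bPi^{\texttt{BDM}}_h \bsig) = (\bsig - \bPi^{\texttt{BDM}}_h \bsig) + \bigl(\bPi^{\texttt{BDM}}_h \bsig - \mathcal S_h(\bPi^{\texttt{BDM}}_h \bsig)\bigr),
\]
noting that $\bPi^{\texttt{BDM}}_h \bsig \in \cP_k(\cT_h,\bbM)\cap H(\bdiv,\Omega,\bbM)$ so that $\mathcal S_h$ applies. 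The first summand is controlled by \eqref{asympV} with $s = r+1$ and $m = k$, giving $h^{\min\{r+1,k+1\}}(\sum_j \norm{\bsig}^2_{r+1,\Omega_j})^{1/2}$. For the second summand, property ii) of Lemma~\ref{stokes} bounds it by $\norm{\bPi^{\texttt{BDM}}_h \bsig - (\bPi^{\texttt{BDM}}_h \bsig)^{\mt}}_{0,\Omega}$; since $\bsig$ is symmetric and the BDM interpolation acts row-wise, $\bPi^{\texttt{BDM}}_h \bsig - (\bPi^{\texttt{BDM}}_h \bsig)^{\mt} = (\bPi^{\texttt{BDM}}_h \bsig - \bsig) - (\bPi^{\texttt{BDM}}_h \bsig - \bsig)^{\mt}$, whose norm is again $\lesssim \norm{\bsig - \bPi^{\texttt{BDM}}_h \bsig}_{0,\Omega}$ and hence of the same optimal order.

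For the second, discrete term I would proceed analogously, writing
\[
\bsig_h - \mathcal S_h(\mathcal A_h \bsig_h) = (\bsig_h - \mathcal A_h \bsig_h) + \bigl(\mathcal A_h \bsig_h - \mathcal S_h(\mathcal A_h \bsig_h)\bigr),
\]
where $\mathcal A_h \bsig_h \in \cP_k(\cT_h,\bbM)\cap H(\bdiv,\Omega,\bbM)$ by Lemma~\ref{propC}. The first summand is estimated directly by \eqref{L2Ph}: $\norm{\bsig_h - \mathcal A_h \bsig_h}_{0,\Omega} \lesssim h\,\norm{h_\cF^{-1/2}\jump{\bsig_h}}_{0,\cF^0_h}$. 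Since $\bsig$ is symmetric it has vanishing jumps, so $\jump{\bsig_h} = \jump{\bsig_h - \bsig}$, and this term is $\lesssim h\,\vertiii{\bsig - \bsig_h}$ up to the weight $\gamma_\cF^{-1/2}$; using $\gamma_\cF^{-1} = \max\{\kappa_K,\kappa_{K'}\} \le \kappa_+$ and Theorem~\ref{conv} one gets $h\cdot\sqrt{\kappa_+}\,h^{\min\{r,k\}}/\sqrt{\kappa_-} \lesssim h^{\min\{r+1,k+1\}}(\sum_j\norm{\bsig}^2_{r+1,\Omega_j})^{1/2}$, the $\kappa_+/\kappa_-$ ratio being absorbed into $C$ as announced. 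For the second summand, property ii) of Lemma~\ref{stokes} gives $\norm{\mathcal A_h\bsig_h - (\mathcal A_h\bsig_h)^{\mt}}_{0,\Omega}$; inserting the symmetric tensor $\bsig$ and using that $\mathcal A_h$ is a projection that equals the identity on $H(\bdiv)$-conforming symmetric tensors is not quite available, so instead I would write $\mathcal A_h\bsig_h - (\mathcal A_h\bsig_h)^{\mt} = (\mathcal A_h\bsig_h - \mathcal A_h\bsig) - (\mathcal A_h\bsig_h - \mathcal A_h\bsig)^{\mt}$ where $\bsig\in H(\bdiv,\Omega,\bbS)$ is fixed by $\mathcal A_h$ only if $\bsig\in\cP_k$; more robustly, bound it by $\norm{\mathcal A_h\bsig_h - \bsig_h}_{0,\Omega} + \norm{\bsig_h - \bsig}_{0,\Omega} + \norm{\bsig - (\ldots)^{\mt}}$, i.e. again reduce to \eqref{L2Ph} and Theorem~\ref{conv}.

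The main obstacle is the bookkeeping around the symmetrization estimate for $\mathcal A_h\bsig_h$: unlike the BDM case, $\bsig$ itself is \emph{not} in the finite element space, so one cannot simply subtract it inside the skew-symmetric part. The clean way is to observe that for \emph{any} symmetric $\bze_h\in\cP_k(\cT_h,\bbM)\cap H(\bdiv,\Omega,\bbM)$ we have $\norm{\mathcal A_h\bsig_h - (\mathcal A_h\bsig_h)^{\mt}}_{0,\Omega} = \norm{(\mathcal A_h\bsig_h - \bze_h) - (\mathcal A_h\bsig_h - \bze_h)^{\mt}}_{0,\Omega} \le 2\norm{\mathcal A_h\bsig_h - \bze_h}_{0,\Omega}$, then choose $\bze_h = \mathcal S_h(\bPi^{\texttt{BDM}}_h\bsig)$ (which is symmetric and $H(\bdiv)$-conforming) and bound $\norm{\mathcal A_h\bsig_h - \mathcal S_h(\bPi^{\texttt{BDM}}_h\bsig)}_{0,\Omega} \le \norm{\mathcal A_h\bsig_h - \bsig_h}_{0,\Omega} + \norm{\bsig_h - \bsig}_{0,\Omega} + \norm{\bsig - \mathcal S_h(\bPi^{\texttt{BDM}}_h\bsig)}_{0,\Omega}$, all three of which have been controlled above. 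Collecting the bounds and using the triangle inequality yields \eqref{asympSD0}.
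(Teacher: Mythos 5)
Your treatment of the first term and of the summand $\norm*{\bsig_h-\mathcal A_h\bsig_h}_{0,\Omega}$ coincides with the paper's: triangle inequality, property ii) of Lemma~\ref{stokes} with the symmetry of $\bsig$ for the BDM part, and \eqref{L2Ph} together with $\jump{\bsig_h}=\jump{\bsig_h-\bsig}$ and Theorem~\ref{conv} for the averaging part, with the $\kappa_+/\kappa_-$ factor tracked correctly. The gap is in your bound for $\norm*{\mathcal A_h\bsig_h-\mathcal S_h(\mathcal A_h\bsig_h)}_{0,\Omega}$. Your ``clean way'' chooses $\bze_h=\mathcal S_h(\bPi^{\texttt{BDM}}_h\bsig)$ and ends up needing $\norm*{\bsig-\bsig_h}_{0,\Omega}$, which you assert ``has been controlled above.'' It has not: the triple norm of Theorem~\ref{conv} controls only $\norm*{(\bsig-\bsig_h)^\tD}_{0,\Omega}$ (plus divergence and jump terms), not the full $L^2$-norm of the error; the $L^2$-bound for $\bsig-\bsig_h$ is obtained only in Corollary~\ref{L2p}, \emph{as a consequence of} the present corollary, so your route is circular. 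Moreover, even if a bound $\norm*{\bsig-\bsig_h}_{0,\Omega}\lesssim h^{\min\{r,k\}}$ were somehow available, it would cap your estimate at the energy rate and lose the extra power of $h$ that \eqref{asympSD0} claims.

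The fix is a one-line observation you almost made but talked yourself out of: the identity $\btau-\btau^{\mt}=(\btau-\bze)-(\btau-\bze)^{\mt}$ requires only that $\bze$ be symmetric, not that it be $H(\bdiv)$-conforming or in any particular finite element space. Since $\bsig_h\in\cP_k(\cT_h,\bbS)$ is itself symmetric, take $\bze=\bsig_h$ to get
\begin{equation*}
\norm*{\mathcal A_h\bsig_h-(\mathcal A_h\bsig_h)^{\mt}}_{0,\Omega}\leq 2\,\norm*{\mathcal A_h\bsig_h-\bsig_h}_{0,\Omega},
\end{equation*}
which is exactly the quantity you already bounded by $h\,\norm*{h_\cF^{-1/2}\jump{\bsig-\bsig_h}}_{0,\cF_h^0}$ via \eqref{L2Ph}; the extra factor of $h$ there is what delivers the rate $h^{\min\{r+1,k+1\}}$. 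This is the paper's argument. Your unnecessary restriction that the subtracted tensor lie in $\cP_k(\cT_h,\bbM)\cap H(\bdiv,\Omega,\bbM)$ is what pushed you into the circular detour.
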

\begin{proof}
We point out that, as a consequence of property ii) in Lemma~\ref{stokes} and the symmetry of $\bsig$, we have that  
\begin{align}\label{hu1}
\begin{split}
	\norm*{\bsig - \mathcal S_h(\bPi^{\texttt{BDM}}_h \bsig )}_{0,\Omega} &\leq \norm*{\bsig - \bPi^{\texttt{BDM}}_h \bsig }_{0,\Omega} + \norm*{\bPi^{\texttt{BDM}}_h \bsig - \mathcal S_h(\bPi^{\texttt{BDM}}_h \bsig )}_{0,\Omega}
	\\[1ex]
	& \lesssim \norm*{\bsig - \bPi^{\texttt{BDM}}_h \bsig }_{0,\Omega} + \norm*{\bsig - \bPi^{\texttt{BDM}}_h \bsig - (\bsig - \bPi^{\texttt{BDM}}_h \bsig)^\mt}_{0,\Omega}\lesssim \norm*{\bsig - \bPi^{\texttt{BDM}}_h \bsig }_{0,\Omega}.
	\end{split}
\end{align}
 Using this time property ii) of Lemma~\ref{stokes} in combination with the symmetry of $\bsig_h$ and \eqref{L2Ph} give 
 \begin{align}\label{hu2}
\begin{split}
	\norm*{\mathcal S_h(\mathcal{A}_h \bsig_h) - \bsig_h}_{0,\Omega} &\leq \norm*{\bsig_h - \mathcal{A}_h \bsig_h }_{0,\Omega} + \norm*{\mathcal{A}_h \bsig_h - \mathcal S_h(\mathcal{A}_h \bsig_h )}_{0,\Omega}
	\\[1ex]
	& \lesssim \norm*{\bsig_h - \mathcal{A}_h \bsig_h }_{0,\Omega} \lesssim h \norm*{h_\cF^{\frac{1}{2}}\jump{\bsig_h}}_{0,\cF^0_h} = h \norm*{h_\cF^{\frac{1}{2}}\jump{\bsig - \bsig_h}}_{0,\cF^0_h}.
	\end{split}
\end{align}
The result follows now by using \eqref{asympV} in  \eqref{hu1} and \eqref{asympSD} in \eqref{hu2}. 
\end{proof}

\begin{theorem}\label{L2}
{Let $\bsig$ and $\bsig_h$ be the solutions of problems \eqref{mvf} and  \eqref{MDG}, respectively.}
Assume that the pair  $\set*{\cP_{k+1}(\Omega,\R^d)\cap H^1(\Omega,\R^d),  \cP_{k}(\Omega,\R^d)}$ is Stokes-stable on the mesh $\cT_h$. Then, if $\bsig\in  H^{r+1}(\cup_j\Omega_j,\bbM)$, with $r\geq 1$, there exists $C>0$ independent of $h$ {and $\mu$, but depending on $\kappa_+/\kappa_-$},   such that 
\begin{align*}
\norm*{\bsig^\tD  - \bsig_h^\tD}_{0,\Omega}    \leq C  \,  h^{\min\{r+1,k+1\}}
\left( \sum_{j=1}^J   \norm*{ \bsig}^2_{r+1, \Omega_j}\right)^{1/2},
\end{align*}
for all $\mathtt{a} \geq \mathtt{a}_0$.
\end{theorem}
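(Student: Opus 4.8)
The natural strategy is a duality (Aubin--Nitsche) argument applied to the symmetrized, $H(\bdiv)$-conforming reconstructions furnished by Corollary~\ref{inter}. Write $\widehat{\bsig}_h := \mathcal S_h(\mathcal A_h \bsig_h)$ and $\widetilde{\bsig} := \mathcal S_h(\bPi^{\texttt{BDM}}_h\bsig)$, both of which live in $\cP_k(\cT_h,\bbS)\cap H(\bdiv,\Omega,\bbS)$. By the triangle inequality,
\[
\norm*{\bsig^\tD-\bsig_h^\tD}_{0,\Omega}\le \norm*{(\bsig-\widetilde{\bsig})^\tD}_{0,\Omega}+\norm*{(\widehat{\bsig}_h-\bsig_h)^\tD}_{0,\Omega}+\norm*{(\widetilde{\bsig}-\widehat{\bsig}_h)^\tD}_{0,\Omega},
\]
and the first two terms are already controlled at order $h^{\min\{r+1,k+1\}}$ by Corollary~\ref{inter}. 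So the whole problem reduces to estimating $\norm*{\be^\tD}_{0,\Omega}$ with $\be := \widetilde{\bsig}-\widehat{\bsig}_h \in \cP_k(\cT_h,\bbS)\cap H(\bdiv,\Omega,\bbS)$.

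First I would introduce the dual problem: given $\be^\tD\in L^2(\Omega,\bbS)$ (extended by its deviatoric value, with zero trace), let $(\bz,q,\bphi)$ solve the Brinkman-type adjoint system $\tfrac12\bphi^\tD = \beps(\bz)$, $\bphi = 2\mu\beps(\bz)-q\bI$ with right-hand side $\be^\tD$, so that in pure-stress form $\tfrac12\inner*{\bphi^\tD,\btau^\tD}+\inner*{\kappa\bdiv\bphi,\bdiv\btau} = \inner*{\be^\tD,\btau^\tD}$ for all $\btau\in X$, together with the elliptic regularity bound $\sum_j\norm*{\bphi}_{2,\Omega_j}\lesssim\norm*{\be^\tD}_{0,\Omega}$ (this is where the restriction $\Gamma_D=\Gamma$, and smoothness/convexity hypotheses implicit in the mesh assumptions, are used). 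Testing this identity with $\btau=\be$ and using that $\be$ is $H(\bdiv)$-conforming (so all jump terms vanish and $\bdiv_h\be=\bdiv\be$) gives
\[
\norm*{\be^\tD}_{0,\Omega}^2 = \tfrac12\inner*{\be^\tD,\bphi^\tD}+\inner*{\kappa\bdiv\be,\bdiv\bphi} - \inner*{\kappa\bdiv\be,\bdiv\bphi} + \tfrac12\inner*{\be^\tD,\bphi^\tD},
\]
i.e.\ one rearranges so that $\norm*{\be^\tD}_{0,\Omega}^2 = a(\be,\bphi)+\inner*{\kappa\bdiv\be,\bdiv\bphi}$ minus the same expression evaluated against an interpolant; the point is to subtract $\bPi^{\texttt{BDM}}_h\bphi$ (or the Scott--Zhang $\pi_h\bphi$) and exploit the Galerkin-type orthogonality satisfied by $\be$. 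Concretely, $\widehat{\bsig}_h$ and $\widetilde{\bsig}$ were built precisely so that their discrepancy inherits, via the consistency identity \eqref{consistency}, the error equation \eqref{step1}; combined with the commuting-diagram property \eqref{commutingV} and $\bdiv$-conformity, the $h^0$-order terms cancel and one is left with a bound of the form $\norm*{\be^\tD}_{0,\Omega}^2\lesssim \vertiii{\bsig-\bsig_h}\cdot\big(\text{interpolation error of }\bphi\big)$.

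The remaining step is to insert the interpolation estimates: $\norm*{\bphi-\bPi^{\texttt{BDM}}_h\bphi}_{0,\Omega}\lesssim h\sum_j\norm*{\bphi}_{2,\Omega_j}\lesssim h\norm*{\be^\tD}_{0,\Omega}$ from \eqref{asympV}, together with the already-known rate $\vertiii{\bsig-\bsig_h}\lesssim h^{\min\{r,k\}}(\sum_j\norm*{\bsig}_{r+1,\Omega_j}^2)^{1/2}$ from Theorem~\ref{conv}, plus the bound on $\norm*{(\bsig-\widetilde{\bsig})^\tD}_{0,\Omega}+\norm*{(\widehat{\bsig}_h-\bsig_h)^\tD}_{0,\Omega}$ from Corollary~\ref{inter}. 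Dividing through by $\norm*{\be^\tD}_{0,\Omega}$ yields one extra power of $h$ over the energy estimate, hence the claimed order $h^{\min\{r+1,k+1\}}$. The main obstacle I anticipate is the bookkeeping in the duality step: making sure that \emph{every} term not of the form "(energy error) $\times$ $(h\cdot$ dual regularity)" genuinely cancels — in particular the cross terms involving $\bdiv\be$ tested against $\bdiv\bphi$ versus $\bdiv\bPi^{\texttt{BDM}}_h\bphi$, which requires the commuting property \eqref{commutingV} and the fact that $\bdiv\be$ is a piecewise polynomial of the right degree — and keeping the constants independent of $\mu$ while allowing dependence on $\kappa_+/\kappa_-$, exactly as in Corollaries~\ref{postP} and~\ref{inter}.
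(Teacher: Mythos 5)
Your reduction to bounding $\norm{\be^\tD}_{0,\Omega}$ with $\be=\mathcal S_h(\bPi^{\texttt{BDM}}_h\bsig)-\mathcal S_h(\mathcal A_h\bsig_h)$ is structurally sound, but the tool you propose for that step --- an Aubin--Nitsche duality argument --- is not the paper's route and contains a genuine gap. Your argument hinges on the elliptic regularity bound $\sum_j\norm{\bphi}_{2,\Omega_j}\lesssim\norm{\be^\tD}_{0,\Omega}$ for the dual pure--stress Brinkman problem. Nothing in the theorem's hypotheses supplies this: $\Omega$ is only a Lipschitz polyhedron and $\kappa$ is piecewise constant with arbitrary jumps, so piecewise $H^2$ regularity of the dual solution is in general false, and even where some $H^{1+s}$ regularity holds the constant depends on the interface geometry and the contrast in an uncontrolled way --- which would contaminate the constant $C$ beyond the stated dependence on $\kappa_+/\kappa_-$. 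Moreover, the cancellation you defer to ``bookkeeping'' is precisely the missing step: $\be$ does not satisfy a clean Galerkin orthogonality against the dual solution; what is actually available (identity \eqref{s-sh2} in the paper) is an orthogonality for $\bsig-\bsig_h$ tested against \emph{conforming discrete} symmetric tensors, which is not the pairing your duality identity requires. As written, the chain from $\norm{\be^\tD}^2_{0,\Omega}=\tfrac12\inner*{\bphi^\tD,\be^\tD}+\inner*{\kappa\bdiv\bphi,\bdiv\be}$ to a product of the energy error with an interpolation error of $\bphi$ is not established.

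The paper avoids duality entirely by exploiting the structure of $a(\cdot,\cdot)$: since $\inner*{\tr(\bsig-\bsig_h),1}=0$, one has $\norm{(\bsig-\bsig_h)^\tD}^2_{0,\Omega}=2\,a(\bsig-\bsig_h,\bsig-\bsig_h)$, so it suffices to bound the $a$-seminorm of the full error. Decomposing $\bsig-\bsig_h=(\bsig-\mathcal S_h(\bPi^{\texttt{BDM}}_h\bsig))+\be+(\mathcal S_h(\mathcal A_h\bsig_h)-\bsig_h)$, the outer terms are handled by Corollary~\ref{inter} and Cauchy--Schwarz in $a$, while the middle term is shown to satisfy $a(\bsig-\bsig_h,\be)=-\norm*{\kappa^{1/2}\bdiv\,\be}^2_{0,\Omega}\le 0$; this sign comes from rewriting the error equation using property \eqref{GDFc} of $\mathcal A_h$, the commuting property \eqref{commutingV}, and the divergence preservation of $\mathcal S_h$, and then testing with $\btau_h=\be$. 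This is exactly the cancellation you were hoping for, but it is obtained by a one-sided (sign) argument with no dual problem and no regularity assumption. If you wish to salvage a duality proof you would need to add an elliptic-regularity hypothesis on the dual Brinkman system and derive the missing orthogonality for $\be$; as the statement stands, the paper's direct argument is the one that closes.
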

\begin{proof}
It follows from the consistency property  \eqref{consistency} that 
\begin{align}\label{s-sh}
	a(\bsig - \bsig_h, \btau_h) + \inner*{\kappa (\bdiv \bsig - \bdiv_h \bsig_h), \bdiv \btau_h} + \inner*{\mean{\kappa \bdiv \btau }, \jump{\bsig_h}}_{\cF^0_h} = 0,
\end{align}
for all $\btau_h \in \cP_{k}(\cT_h,\bbS)\cap H(\bdiv,\Omega, \bbS)$. Using an integration by parts in each $K\in \cT_h$  gives 
\begin{align*}
	\inner*{\kappa \bdiv_h \bsig_h, \bdiv \btau_h} &= \sum_{K\in \cT_h}  \int_K \kappa\bdiv \bsig_h\cdot \bdiv \btau_h
	\\[1ex]
	&= - \sum_{K\in \cT_h}  \int_K \kappa\bsig_h : \nabla (\bdiv \btau_h) + 
	\sum_{K\in \cT_h}  \int_{\partial K} \kappa\bsig_h\bn_K \cdot \bdiv \btau_h,
\end{align*}
and taking advantage of \eqref{GDFc} yields  
\begin{align}\label{equiv}
\begin{split}
	\inner*{\kappa \bdiv_h \bsig_h, \bdiv \btau_h} &= - \sum_{K\in \cT_h}  \int_K \kappa\mathcal{A}_h \bsig_h : \nabla (\bdiv \btau_h) + 
	\sum_{K\in \cT_h}  \int_{\partial K} \kappa\bsig_h\bn_K \cdot \bdiv \btau_h
	\\[1ex]
	&= \int_K \kappa \bdiv \mathcal{A}_h \bsig_h \cdot  \bdiv \btau_h  + 
	\sum_{K\in \cT_h}  \int_{\partial K} (\bsig_h - \mathcal{A}_h \bsig_h)\bn_K \cdot \kappa \bdiv \btau_h,
	\end{split}
\end{align}
for all $\btau_h \in \cP_{k}(\cT_h,\bbS)\cap H(\bdiv,\Omega, \bbS)$. 
Next, by construction of $\mathcal{A}_h$ it holds  
\[
 \int_F(\bsig_h- \mathcal{A}_h \bsig_h)\bn_K\cdot 
 \boldsymbol{\phi} = \begin{cases}
 \frac{1}{2}\displaystyle\int_F \jump{\bsig_h}_F\cdot \boldsymbol{\phi}\quad \forall \boldsymbol{\phi}\in \cP_k(F,\R^d) & \text{if $F\in \cF^0_h$},\\[1ex]
 0\quad \forall \boldsymbol{\phi}\in \cP_k(F,\R^d) & \text{if $F\in \cF^\partial_h$},
 \end{cases}
\] 
which means that \eqref{equiv} can be equivalently written  
\[
  \inner*{\kappa \bdiv_h \bsig_h, \bdiv \btau_h} = \inner*{ \kappa \bdiv \mathcal{A}_h \bsig_h, \bdiv \btau_h} + \inner*{\mean{\kappa \bdiv \btau_h }, \jump{\bsig_h}}_{\cF^0_h}.
\]
Substituting back the last identity in \eqref{s-sh} yields
\begin{align}\label{s-sh2}
	a(\bsig - \bsig_h, \btau_h) + \inner*{\kappa \bdiv (\bsig - \mathcal{A}_h\bsig_h), \bdiv \btau_h}  = 0 
	\qquad \forall \btau_h \in \cP_{k}(\cT_h,\bbS)\cap H(\bdiv,\Omega, \bbS).
\end{align}
On the other hand, we notice that by virtue of \eqref{commutingV} and property i) of Theorem~\ref{stokes}, 
\begin{align*}
	\inner*{\kappa \bdiv (\bsig -  \mathcal{A}_h\bsig_h), \bdiv \btau_h} &= \inner*{\kappa \bdiv( \bPi^{\texttt{BDM}}_h \bsig - \mathcal{A}_h\bsig_h), \bdiv \btau_h}
	\\[1ex]
	& = \inner*{\kappa \bdiv( \mathcal S_h(\bPi^{\texttt{BDM}}_h \bsig - \mathcal{A}_h\bsig_h)), \bdiv \btau_h},
\end{align*}
and hence, taking $\btau_h =  \mathcal S_h(\bPi^{\texttt{BDM}}_h \bsig - \mathcal{A}_h\bsig_h) \in \cP_{k}(\cT_h,\bbS)\cap H(\bdiv,\Omega, \bbS)$ in \eqref{s-sh2} gives  
\begin{align}\label{s-sh3}
	a(\bsig - \bsig_h, \mathcal S_h(\bPi^{\texttt{BDM}}_h \bsig - \mathcal{A}_h\bsig_h)) + \norm*{\kappa^{\frac{1}{2}} \bdiv (\mathcal S_h(\bPi^{\texttt{BDM}}_h \bsig - \mathcal{A}_h\bsig_h))}^2_{0,\Omega}  = 0.
\end{align}
Consequently, 
\begin{align*}
	a( \bsig -  \bsig_h, \bsig -  \bsig_h)&= a\inner*{\bsig -  \bsig_h, \bsig - \mathcal S_h(\bPi^{\texttt{BDM}}_h \bsig)}  + a\inner*{\bsig -  \bsig_h, \mathcal S_h(\bPi^{\texttt{BDM}}_h \bsig - \mathcal{A}_h\bsig_h)} 
	\\[1ex]
	&\qquad + a\inner*{\bsig -  \bsig_h, \mathcal S_h(\mathcal{A}_h \bsig_h) - \bsig_h)}
	\\[1ex]
	&\leq a\inner*{\bsig -  \bsig_h, \bsig - \mathcal S_h(\bPi^{\texttt{BDM}}_h \bsig)} + 
	a\inner*{\bsig -  \bsig_h, \mathcal S_h(\mathcal{A}_h \bsig_h) - \bsig_h)},
\end{align*}
and the Cauchy--Schwarz inequality implies that 
\begin{equation}\label{imp}
	a( \bsig -  \bsig_h, \bsig -  \bsig_h)^{\frac{1}{2}}\lesssim  \norm*{\bsig - \mathcal S_h(\bPi^{\texttt{BDM}}_h \bsig)}_{0,\Omega} + \norm*{\bsig_h - \mathcal S_h(\mathcal{A}_h \bsig_h)}_{0,\Omega}.
\end{equation}
The result follows from \eqref{asympSD0} and from the fact that $\inner*{\tr(\bsig - \bsig_h), 1} = 0$. 
\end{proof}

It only remains now to show that the convergence of $p_h:= -\tfrac{1}{d}\tr \bsig_h$ to $p:= -\tfrac{1}{d}\tr \bsig$  is also enhanced by one order in the $L^2$-norm, where  {where $\bsig$ and $\bsig_h$ are the solutions of problems \eqref{mvf} and  \eqref{MDG}, respectively.}
\begin{corollary}\label{L2p}
 Assume that $\set*{\cP_{k+1}(\Omega,\R^d)\cap H^1(\Omega,\R^d),  \cP_{k}(\Omega,\R^d)}$ is a stable Stokes pair on the mesh $\cT_h$.  If  $\bsig\in  H^{r+1}(\cup_j\Omega_j,\bbM)$, with $r\geq 1$, there exists $C>0$ independent of $h$ {and $\mu$, but depending on $\kappa_+/\kappa_-$},   such that 
\begin{align*}
\norm*{p  - p_h}_{0,\Omega}    \leq C  \,  h^{\min\{r+1,k+1\}}
\sum_{j=1}^J   \norm*{ \bsig}_{r+1, \Omega_j},
\end{align*}
for all $\mathtt{a} \geq \mathtt{a}_0$.
\end{corollary}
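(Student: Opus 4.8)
The plan is to obtain the estimate as a byproduct of an enhanced $L^2$-bound for the \emph{full} stress error, rather than through the pressure inf-sup condition. First I would observe that, since $p - p_h = -\tfrac{1}{d}\tr(\bsig - \bsig_h)$ and $\norm{\tr\btau}^2_{0,\Omega} \le d\,\norm{\btau}^2_{0,\Omega}$, it suffices to prove
\[
\norm{\bsig - \bsig_h}_{0,\Omega} \;\lesssim\; h^{\min\{r+1,k+1\}}\Bigl(\textstyle\sum_{j=1}^J\norm{\bsig}^2_{r+1,\Omega_j}\Bigr)^{1/2}.
\]
Theorem~\ref{L2} already controls $\norm{\bsig^\tD - \bsig_h^\tD}_{0,\Omega}$ at this rate, so the only missing ingredient is a matching bound for the trace part, which this argument delivers directly.

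Next I would reuse the $H(\bdiv)$-conforming symmetric reconstructions from Section~\ref{sec:L2}: set $\tilde\bsig := \mathcal{S}_h(\bPi^{\texttt{BDM}}_h\bsig)$ and $\tilde\bsig_h := \mathcal{S}_h(\mathcal{A}_h\bsig_h)$, and, using the linearity of $\mathcal{S}_h$, write $\bchi_h := \tilde\bsig - \tilde\bsig_h = \mathcal{S}_h(\bPi^{\texttt{BDM}}_h\bsig - \mathcal{A}_h\bsig_h) \in \cP_k(\cT_h,\bbS)\cap H(\bdiv,\Omega,\bbS)$. The triangle inequality gives
\[
\norm{\bsig - \bsig_h}_{0,\Omega} \;\le\; \norm{\bsig - \tilde\bsig}_{0,\Omega} + \norm{\bchi_h}_{0,\Omega} + \norm{\tilde\bsig_h - \bsig_h}_{0,\Omega},
\]
and the two outer terms are already $O\bigl(h^{\min\{r+1,k+1\}}\bigr)$ by Corollary~\ref{inter}; it remains to estimate $\norm{\bchi_h}_{0,\Omega}$.

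For this I would invoke the orthogonality identity \eqref{s-sh3} proved inside Theorem~\ref{L2}, which with the present notation reads $a(\bsig - \bsig_h,\bchi_h) + \norm{\kappa^{1/2}\bdiv\bchi_h}^2_{0,\Omega} = 0$. Since $\bchi_h \in H(\bdiv,\Omega,\bbM)$, the global coercivity estimate \eqref{3.7} gives $\norm{\bchi_h}^2_{0,\Omega} \le \norm{\bchi_h}^2_{H(\bdiv,\Omega)} \le \beta\bigl(a(\bchi_h,\bchi_h) + \norm{\kappa^{1/2}\bdiv\bchi_h}^2_{0,\Omega}\bigr)$, and subtracting the orthogonality identity rewrites the bracket as $a\bigl((\tilde\bsig - \bsig) + (\bsig_h - \tilde\bsig_h),\,\bchi_h\bigr)$. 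The boundedness of $a(\cdot,\cdot)$ on $L^2(\Omega,\bbM)$ then yields $\norm{\bchi_h}_{0,\Omega} \lesssim \norm{\bsig - \tilde\bsig}_{0,\Omega} + \norm{\bsig_h - \tilde\bsig_h}_{0,\Omega}$, which is $O\bigl(h^{\min\{r+1,k+1\}}\bigr)$ again by Corollary~\ref{inter}. Collecting the three contributions yields the bound for $\norm{\bsig - \bsig_h}_{0,\Omega}$, hence for $\norm{p - p_h}_{0,\Omega}$.

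The step I expect to be the main obstacle is realizing that the direct route through the pressure inf-sup condition (as in Corollary~\ref{postP}) does not gain the extra order: an elementwise integration by parts leaves $\norm{\bdiv_h(\bsig - \bsig_h)}_{0,\Omega}$ and $\norm{h_\cF^{-1/2}\jump{\bsig - \bsig_h}}_{0,\cF^0_h}$, both only $O\bigl(h^{\min\{r,k\}}\bigr)$. The remedy is precisely to transfer everything onto the symmetrized, divergence-preserving reconstructions and to combine \eqref{s-sh3} with the global coercivity \eqref{3.7}. One should also keep track of the $\kappa$-dependence: \eqref{3.7} contributes a factor $\kappa_-^{-1}$ and Corollary~\ref{inter} a factor $\kappa_+/\kappa_-$, so the resulting constant is independent of $\mu$ and enters only through the permeability contrast, consistent with the statement.
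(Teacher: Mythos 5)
Your proposal is correct and follows essentially the same route as the paper: both reduce $\norm{p-p_h}_{0,\Omega}$ to $\norm{\bsig-\bsig_h}_{0,\Omega}$, split via the symmetrized reconstructions $\mathcal S_h(\bPi^{\texttt{BDM}}_h\bsig)$ and $\mathcal S_h(\mathcal A_h\bsig_h)$, and control the middle term $\bchi_h$ by combining the identity \eqref{s-sh3} with the coercivity \eqref{3.7} and the bounds of Corollary~\ref{inter}. The only (harmless) difference is cosmetic: you bound $a(\bchi_h,\bchi_h)+\norm*{\kappa^{1/2}\bdiv\bchi_h}^2_{0,\Omega}$ in one stroke as $a\bigl((\tilde\bsig-\bsig)+(\bsig_h-\tilde\bsig_h),\bchi_h\bigr)$ using the $L^2$-boundedness of $a$, whereas the paper estimates $a(\bchi_h,\bchi_h)^{1/2}$ and $\norm*{\kappa^{1/2}\bdiv\bchi_h}^2_{0,\Omega}$ separately before invoking \eqref{3.7}.
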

\begin{proof}
Using the triangle inequality and \eqref{imp} yields
\begin{align}\label{split1}
\begin{split}
	a(\mathcal S_h(\bPi^{\texttt{BDM}}_h \bsig - \mathcal{A}_h\bsig_h), \mathcal S_h(\bPi^{\texttt{BDM}}_h \bsig - \mathcal{A}_h\bsig_h))^{\frac{1}{2}} &\lesssim  
	a( \bsig -  \bsig_h, \bsig -  \bsig_h)^{\frac{1}{2}}
	\\[1ex]
	&+\norm*{\bsig - \mathcal S_h(\bPi^{\texttt{BDM}}_h \bsig)}_{0,\Omega}
	+\norm*{\bsig_h - \mathcal S_h(\mathcal{A}_h\bsig_h)}_{0,\Omega}
	\\[1ex]
	&\lesssim \norm*{\bsig - \mathcal S_h(\bPi^{\texttt{BDM}}_h \bsig)}_{0,\Omega}
	+\norm*{\bsig_h - \mathcal S_h(\mathcal{A}_h\bsig_h)}_{0,\Omega}, 
	\end{split}
\end{align}
which permits us to deduce from \eqref{s-sh3} that 
\begin{align}\label{split2}
\begin{split}
	   \norm*{\kappa^{\frac{1}{2}} \bdiv (\mathcal S_h(\bPi^{\texttt{BDM}}_h \bsig - \mathcal{A}_h\bsig_h))}^2_{0,\Omega} &\leq a\big(\mathcal S_h(\bPi^{\texttt{BDM}}_h \bsig - \mathcal{A}_h\bsig_h), \mathcal S_h(\bPi^{\texttt{BDM}}_h \bsig - \mathcal{A}_h\bsig_h)\big)^{\frac{1}{2}}   
	a\big( \bsig -  \bsig_h, \bsig -  \bsig_h\big)^{\frac{1}{2}}
	 \\[1ex]
	 & \lesssim \Big( \norm*{\bsig - \mathcal S_h(\bPi^{\texttt{BDM}}_h \bsig)}_{0,\Omega}
	+\norm*{\bsig_h - \mathcal S_h(\mathcal{A}_h\bsig_h)}_{0,\Omega}\Big)^2.
	\end{split} 
\end{align}
Consequently, it follows from \eqref{split1}, \eqref{split2} and the coercivity property  \eqref{3.7} that 
\[
  \norm*{\mathcal S_h(\bPi^{\texttt{BDM}}_h \bsig - \mathcal{A}_h\bsig_h)}_{0,\Omega} \lesssim   
 \norm*{\bsig - \mathcal S_h(\bPi^{\texttt{BDM}}_h \bsig)}_{0,\Omega}
	+\norm*{\bsig_h - \mathcal S_h(\mathcal{A}_h\bsig_h)}_{0,\Omega}.
\]
Finally, by virtue of the triangle inequality, 
\begin{align*}
	\sqrt{d}\norm*{p - p_h}_{0,\Omega} \leq \norm*{\bsig - \bsig_h}_{0,\Omega} &\leq \norm*{\bsig - \mathcal S_h(\bPi^{\texttt{BDM}}_h \bsig)}_{0,\Omega} + \norm*{\mathcal S_h(\bPi^{\texttt{BDM}}_h \bsig - \mathcal{A}_h\bsig_h)}_{0,\Omega} + \norm*{\bsig_h - \mathcal S_h(\mathcal{A}_h\bsig_h)}_{0,\Omega}
	\\[1ex]
	&\lesssim \norm*{\bsig - \mathcal S_h(\bPi^{\texttt{BDM}}_h \bsig)}_{0,\Omega}
	+\norm*{\bsig_h - \mathcal S_h(\mathcal{A}_h\bsig_h)}_{0,\Omega},
\end{align*}
and the result follows from \eqref{asympSD0}. 
\end{proof}

\section{Numerical results}\label{sec:results}
We now include a set of numerical examples that illustrate the convergence properties of the proposed DG method, and the usability of the formulation in simulating typical viscous flow in porous media. 
The stabilisation constant depends on the polynomial degree $k\geq 1$, and it is here taken as $\mathtt{a} = \mathtt{a}^* k^2$, where $\mathtt{a}^*$ is specified in each example. All computational tests were conducted using the open source finite element library FEniCS \cite{alnaes}.

\begin{table}[t]
	\centering
{\small	\begin{tabular}{||crc|gg|cccccccccccc||}
	\toprule 
$k$ &{\tt DoF}& $h$ & ${\tt e}_{\vertiii{}}(\bsig)$ &  {\tt r} & ${\tt e}_a(\bsig)$ &  {\tt r} 
& ${\tt e}_{\bdiv_h}(\bsig)$ & {\tt r} 
& ${\tt e}_{\jump{}}(\bsig)$ & {\tt r} 
& ${\tt e}_0(\bu)$ & {\tt r} 
& ${\tt e}_0(\bu^\star)$ & {\tt r} 
& ${\tt e}_0(p)$ & {\tt r} 
 \\
\midrule
\multirow{6}{*}{1}&
   72 & 0.707 & 1.17e+0 & * & 1.08e-01 & * & 8.95e-01 & * & 1.69e-01 & * & 2.32e+2 & * & 2.12e+2 & * & 1.04e-01 & * \\
&   288 & 0.354 & 5.97e-01 & 0.97 & 4.73e-02 & 1.19 & 4.56e-01 & 0.97 & 9.43e-02 & 0.84 & 8.66e+1 & 1.42 & 7.71e+1 & 1.46 & 3.87e-02 & 1.43\\
&  1152 & 0.177 & 2.99e-01 & 1.00 & 2.19e-02 & 1.11 & 2.28e-01 & 1.00 & 4.90e-02 & 0.95 & 3.24e+1 & 1.42 & 2.84e+1 & 1.44 & 1.63e-02 & 1.25\\
&  4608 & 0.088 & 1.49e-01 & 1.00 & 1.07e-02 & 1.04 & 1.14e-01 & 1.00 & 2.48e-02 & 0.98 & 1.17e+1 & 1.47 & 1.01e+1 & 1.50 & 7.64e-03 & 1.09\\
& 18432 & 0.044 & 7.46e-02 & 1.00 & 5.28e-03 & 1.01 & 5.68e-02 & 1.00 & 1.25e-02 & 0.99 & 4.15e+0 & 1.49 & 3.54e+0 & 1.51 & 3.75e-03 & 1.03\\
& 73728 & 0.022 & 3.73e-02 & 1.00 & 2.63e-03 & 1.01 & 2.84e-02 & 1.00 & 6.26e-03 & 1.00 & 1.48e+0 & 1.49 & 1.25e+0 & 1.50 & 1.86e-03 & 1.01\\
\midrule
	\multirow{6}{*}{2} &
   144 & 0.707 & 2.60e-01 & *& 3.41e-02 & * & 2.12e-01 & * & 1.41e-02 & * & 2.23e+01 & * & 2.03e+1 & * & 2.67e-02 & *\\
&   576 & 0.354 & 7.25e-02 & 1.84 & 8.84e-03 & 1.95 & 5.93e-02 & 1.84 & 4.32e-03 & 1.70 & 6.24e+0 & 1.84 & 5.40e+0 & 1.91 & 6.34e-03 & 2.08\\
 & 2304 & 0.177 & 1.87e-02 & 1.96 & 2.25e-03 & 1.97 & 1.53e-02 & 1.96 & 1.16e-03 & 1.90 & 1.48e+0 & 2.07 & 1.24e+0 & 2.12 & 1.58e-03 & 2.00\\
 & 9216 & 0.088 & 4.71e-03 & 1.99 & 5.67e-04 & 1.99 & 3.85e-03 & 1.99 & 2.99e-04 & 1.96 & 3.53e-01 & 2.07 & 2.89e-01 & 2.10 & 3.99e-04 & 1.99\\
 &36864 & 0.044 & 1.18e-03 & 2.00 & 1.42e-04 & 1.99 & 9.63e-04 & 2.00 & 7.56e-05 & 1.98 & 8.55e-02 & 2.04 & 6.92e-02 & 2.06 & 1.00e-04 & 1.99\\
&147456 & 0.022 & 2.96e-04 & 2.00 & 3.56e-05 & 2.00 & 2.41e-04 & 2.00 & 1.90e-05 & 1.99 & 2.10e-02 & 2.03 & 1.69e-02 & 2.03 & 2.52e-05 & 2.00\\
\bottomrule
	\end{tabular}}
	\caption{Error history produced on usual uniform meshes (elements are squares split along one diagonal) and using polynomial degrees $k=1,2$. Error decay and convergence rates for stress, velocity, and pressure approximations.}\label{table1}
\end{table} 

\medskip
\noindent\textbf{Convergence tests.} 
First we compare approximate and closed-form exact solutions for various levels of uniform mesh  refinement. Let us consider the unit square domain $\Omega = (0,1)^2$, the parameters $\mu = 10^{-3}$, $\kappa = 1$, $\mathtt{a}^*=10$, and the following manufactured smooth velocity and pressure 
\[ \bu = \begin{pmatrix}\cos(\pi x)\sin(\pi y) \\ - \sin(\pi x)\cos(\pi y)\end{pmatrix}, \qquad p = \sin(\pi xy),\]
from which the exact Cauchy stress $\bsig_{\mathrm{manuf}}$, forcing term, imposed non-homogeneous velocity $\bG_D$ on $\Gamma_D = \{ 0 \}\times (0,1) \cup (0,1)\times\{1\}$ (left and top sides of the square), and imposed non-homogeneous normal stress $\bG_N$ on $\Gamma_N = \partial\Omega\setminus\Gamma_D$ are constructed. The boundary partition indicates that we use the parameter {$\theta =0$}. 
Errors between exact and approximate solutions computed using \eqref{MDG-nonh} are measured in the following norms
\begin{gather*}
{\tt e}_{\vertiii{}}(\bsig) =  \vertiii{\bsig-\bsig_h}, \quad {\tt e}_a(\bsig) = a(\bsig-\bsig_h,\bsig-\bsig_h)^{\frac12}, \quad {\tt e}_{\bdiv_h}(\bsig) =\|\bdiv_h(\bsig-\bsig_h)\|_{0,\Omega} , \\ 
{\tt e}_{\jump{}}(\bsig) =\norm*{\gamma_\cF^{-\frac{1}{2}} h_F^{-\frac{1}{2}} \jump{\bsig-\bsig_h}}_{0,\cF^*_h} , \quad  {\tt e}_0(\bu) =\|\bu-\bu_h\|_{0,\Omega}  , \quad  {\tt e}_0(\bu^\star) =  \|\bu-\bu^\star_h\|_{0,\Omega},  \quad  {\tt e}_0(p) = \|p-p_h\|_{0,\Omega} ,
\end{gather*}
and the experimental rates of convergence   are computed as 
\[{\tt r}  =\log({\tt e}_{(\cdot)}/\tilde{{\tt e}}_{(\cdot)})[\log(h/\tilde{h})]^{-1}, \]
where ${\tt e},\tilde{{\tt e}}$ denote errors generated on two consecutive  meshes of sizes $h$ and~$\tilde{h}$, respectively. 
Such an error history is displayed, for uniform meshes composed of squares splitted along one diagonal, in Table~\ref{table1}. As anticipated by Theorem~\ref{conv}, a $k$-order of convergence is observed for the Cauchy stress in the norm defined in \eqref{eq:triple-norm}. For sake of clarity we also tabulate the different components of the norm. An agreement with the error estimates from Corollaries~\ref{postV} and \ref{postP} is also observed for the post-processed velocity and pressure in the $L^2$-norms.  For $k=2$  the error bounds are also confirmed using the second velocity post-processing from Lemma~\ref{postV2} (which employs the BDM-mixed pair $\set*{H(\sdiv, \Omega)\cap \cP_{k-1}(\cT_h, \R^2), \cP_{k-2}(\cT_h)}$  for velocity and the auxiliary field). We show the decay of the error in the $H(\sdiv)$-norm, but we recall that the divergence of the discrete velocity is zero to machine precision. Note that for $k=1$  the second velocity post-processing  needs to be done with the BDM-mixed pair  $\set*{H(\sdiv, \Omega)\cap \cP_{1}(\cT_h, \R^2), \cP_{0}(\cT_h)}$. Note also that, for the type of meshes considered in Table~\ref{table1},  the additional order of convergence for the deviatoric stress and for the pressure in the $L^2$-norm are not achieved. However, if we consider instead special partitions using, for example, \emph{simplicial barycentric trisected} -- Hsieh--Clough--Tocher meshes -- or \emph{twice quadrisected crisscrossed} meshes (as required by the Scott--Vogelius element \cite[Section 55.3.1]{ern1}), the additional order shown in Theorem~\ref{L2} and Corollary~\ref{L2p} is clearly obtained (see the sixth and rightmost columns of  Table~\ref{table2}). 


\begin{table}[t]
	\centering
{\small	\begin{tabular}{||crc|gg|mmccccccccmm||}
	\toprule 
$k$ &{\tt DoF}& $h$ & ${\tt e}_{\vertiii{}}(\bsig)$ &  {\tt r} & ${\tt e}_a(\bsig)$ &  {\tt r} 
& ${\tt e}_{\bdiv_h}(\bsig)$ & {\tt r} 
& ${\tt e}_{\jump{}}(\bsig)$ & {\tt r} 
& ${\tt e}_0(\bu)$ & {\tt r} 
& ${\tt e}_0(\bu^\star)$ & {\tt r} 
& ${\tt e}_0(p)$ & {\tt r} 
 \\
\midrule
\multirow{6}{*}{1}&
   144 & 0.500 & 7.15e-01 & * & 3.04e-02 & * & 5.89e-01 & * & 9.55e-02 & * & 1.85e+2 & * & 1.48e+2 & * & 2.89e-02 & * \\
&   576 & 0.250 & 3.54e-01 & 1.02 & 7.24e-03 & 2.07 & 2.91e-01 & 1.02 & 5.56e-02 & 0.78 & 7.27e+1 & 1.35 & 5.85e+1 & 1.34 & 7.08e-03 & 2.03\\
&  2304 & 0.125 & 1.75e-01 & 1.01 & 1.76e-03 & 2.04 & 1.44e-01 & 1.02 & 2.95e-02 & 0.91 & 2.82e+1 & 1.37 & 2.25e+1 & 1.38 & 1.77e-03 & 2.00\\
&  9216 & 0.062 & 8.71e-02 & 1.01 & 4.35e-04 & 2.02 & 7.14e-02 & 1.01 & 1.52e-02 & 0.96 & 1.12e+1 & 1.33 & 8.77e+0 & 1.36 & 4.44e-04 & 2.00\\
& 36864 & 0.031 & 4.34e-02 & 1.00 & 1.08e-04 & 2.01 & 3.56e-02 & 1.01 & 7.71e-03 & 0.98 & 4.67e+0 & 1.26 & 3.56e+0 & 1.30 & 1.11e-04 & 2.00\\
&147456 & 0.016 & 2.17e-02 & 1.00 & 2.69e-05 & 2.00 & 1.78e-02 & 1.00 & 3.88e-03 & 0.99 & 2.07e+0 & 1.18 & 1.53e+0 & 1.22 & 2.78e-05 & 2.00\\
\midrule
\multirow{6}{*}{2} &
  288 & 0.500 & 9.90e-02 & * & 3.08e-03 & * & 9.06e-02 & * & 5.32e-03 & * & 1.20e+01 & * & 8.54e+00 & * & 3.60e-03 & * \\
&  1152 & 0.250 & 2.45e-02 & 2.01 & 3.85e-04 & 3.00 & 2.27e-02 & 2.00 & 1.47e-03 & 1.85 & 2.75e+00 & 2.12 & 2.06e+00 & 2.05 & 4.52e-04 & 2.99\\
&  4608 & 0.125 & 6.04e-03 & 2.02 & 4.88e-05 & 2.98 & 5.60e-03 & 2.02 & 3.91e-04 & 1.91 & 5.95e-01 & 2.21 & 4.72e-01 & 2.13 & 5.68e-05 & 2.99\\
& 18432 & 0.062 & 1.50e-03 & 2.01 & 6.16e-06 & 2.99 & 1.40e-03 & 2.01 & 1.01e-04 & 1.95 & 1.36e-01 & 2.13 & 1.12e-01 & 2.07 & 7.16e-06 & 2.99\\
& 73728 & 0.031 & 3.75e-04 & 2.00 & 7.75e-07 & 2.99 & 3.48e-04 & 2.00 & 2.57e-05 & 1.98 & 3.23e-02 & 2.07 & 2.74e-02 & 2.03 & 8.99e-07 & 2.99\\
&294912 & 0.016 & 9.36e-05 & 2.00 & 9.71e-08 & 3.00 & 8.71e-05 & 2.00 & 6.47e-06 & 1.99 & 7.87e-03 & 2.04 & 6.77e-03 & 2.02 & 1.13e-07 & 3.00\\
\midrule  
	\multirow{6}{*}{3} &
  480 & 0.500 & 1.55e-02 & * & 3.02e-04 & * & 1.48e-02 & * & 4.02e-04 & * & 2.02e+00 & * & 1.36e+00 & * & 4.34e-04 & * \\
&  1920 & 0.250 & 1.75e-03 & 3.15 & 1.97e-05 & 3.94 & 1.68e-03 & 3.14 & 4.82e-05 & 3.06 & 1.98e-01 & 3.35 & 1.23e-01 & 3.46 & 2.55e-05 & 4.09\\
&  7680 & 0.125 & 2.14e-04 & 3.03 & 1.29e-06 & 3.94 & 2.06e-04 & 3.03 & 6.42e-06 & 2.91 & 2.07e-02 & 3.25 & 1.14e-02 & 3.44 & 1.62e-06 & 3.98\\
& 30720 & 0.062 & 2.65e-05 & 3.01 & 8.27e-08 & 3.96 & 2.56e-05 & 3.01 & 8.36e-07 & 2.94 & 2.30e-03 & 3.17 & 1.09e-03 & 3.38 & 1.03e-07 & 3.98\\
&122880 & 0.031 & 3.31e-06 & 3.00 & 5.29e-09 & 3.97 & 3.19e-06 & 3.00 & 1.07e-07 & 2.97 & 2.66e-04 & 3.11 & 1.11e-04 & 3.30 & 6.54e-09 & 3.97\\
&491520 & 0.016 & 4.15e-07 & 2.99 & 2.86e-10 & 3.99 & 3.99e-07 & 3.00 & 1.35e-08 & 2.98 & 3.19e-05 & 3.06 & 1.19e-05 & 3.22 & 3.15e-10 & 3.99 \\

\bottomrule
	\end{tabular}}
	\caption{Error history produced on meshes with twice quadrisected crisscrossed elements, and using polynomial degrees $k=1,2,3$. Error decay and convergence rates for stress, velocity, and pressure approximations.}\label{table2}
\end{table} 

We remark that the  limit case of $\kappa=0$ cannot be studied with the present formulation. Moreover, while for the other limit case of $\mu =0$ the method  converges optimally in all the stress norms as well as in the pressure post-processing, we cannot use \eqref{eq:post-cont} to recover velocity (and the second post-process depends on the first one). Nevertheless, we point out that the method still converges optimally for stress and all post-processed fields, for very low values of viscosity and/or permeability, and for very high values of permeability (confirmed through tests over several orders of magnitude).

\begin{figure}[t!]
  \begin{center}
    \includegraphics[width=0.4\textwidth]{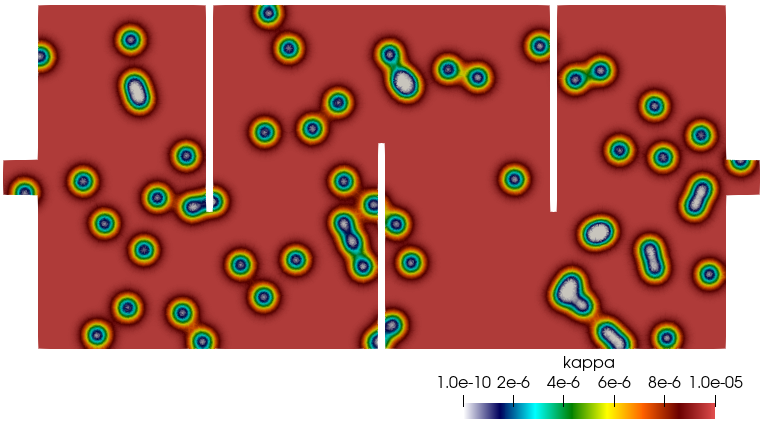}
       \raisebox{6mm}{\includegraphics[width=0.17\textwidth]{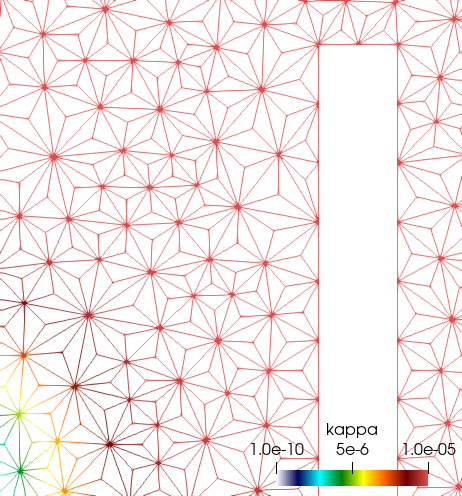}}
    \includegraphics[width=0.4\textwidth]{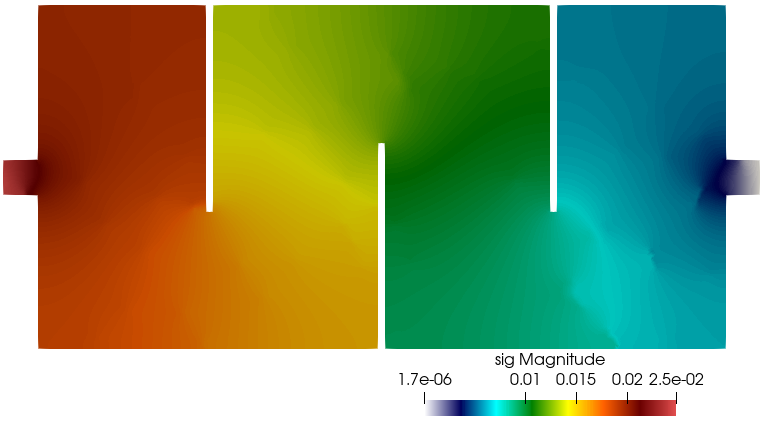}
\includegraphics[width=0.4\textwidth]{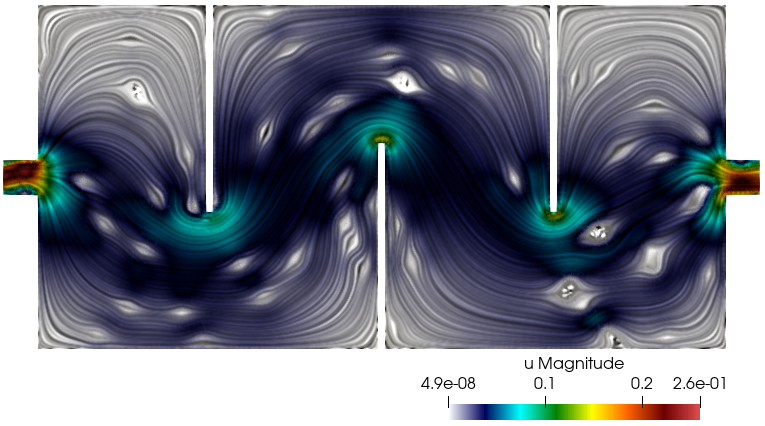}\qquad
\includegraphics[width=0.4\textwidth]{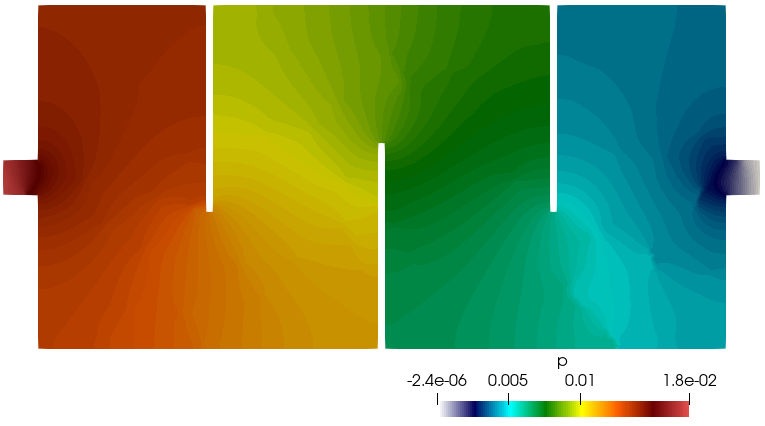}
  \end{center}

  \vspace{-4mm}  
  \caption{Flow on a maze-shaped domain. Heterogeneous permeability distribution, zoom to visualize the simplicial barycentric trisected mesh, Cauchy stress magnitude, first post-process of velocity, and post-processed pressure.}\label{fig:maze}
\end{figure}

\medskip
\noindent\textbf{2D maze and channel flow with heterogeneous permeability.}
Now we focus on two different geometries. For the first 2D example we consider a maze-shaped geometry of length 2.2 and height 1 (in adimensional units). We use an unstructured simplicial barycentric trisected grid with 66006 elements, representing, for $k=1$, a total of 594054 degrees of freedom. The rightmost segment of the boundary is the outlet ($\Gamma_N$), where the outflow condition $\bsig\bn = \boldsymbol{0}$ is imposed. The remainder of the boundary is $\Gamma_D$, split between the leftmost segment of the boundary (the inlet, where we impose the parabolic profile $\bG_{\mathrm{in}} =(100(y-0.45)(0.55-y),0)^{\tt t}$) and the walls where $\bG_{\mathrm{wall}} = \boldsymbol{0}$. The external force is zero, the viscosity is $\mu = 10^{-6}$, and the permeability $\kappa(\bx)$ {is characterized by a non-homogeneous field taking the value $\kappa_{+} = 10^{-5}$ everywhere on the domain, except on 60 small disks distributed randomly, where the permeability smoothly goes down to the much smaller value $\kappa_{-} = 10^{-10}$: 
\[ \kappa(\bx) = \max(\kappa_{+}(1-\sum_{i=1}^{60}\exp[-800\{(x-q_x(i))^2+(y-q_y(i))^2\} ]),\kappa_{-} ),\] where $(q_x(i),q_y(i))$ denote the coordinates of the randomly located points}. 
The stabilization parameter is  $\mathtt{a}^*=15$. Figure~\ref{fig:maze} displays the permeability field, the Cauchy stress magnitude, the line integral convolutions of post-processed velocity, and the distribution of post-processed pressure, where we observe that the expected symmetry of the flow is disrupted by the heterogeneous permeability. These results indicate well-resolved approximations.

In the second 2D example we follow \cite{kan} and compute channel flow solutions and take the permeability distribution from the Cartesian SPE10 benchmark  dataset for reservoir simulations / model 2 (we choose layer 45, corresponding to a fluvial fan pattern with channeling. See also, e.g., \cite{aarnes07,christie01}). 
The permeability data has a very large contrast: the minimum and maximum values are $\kappa_{-} = 1.3\cdot10^{-18}\,[\text{m}^2]$ and $\kappa_{+} = 2.0\cdot10^{-11}\,[\text{m}^2]$, and it is 
projected onto a twice quadrisected crisscrossed mesh discretizing the rectangular domain $\Omega = (0,6.096)\,[\text{m}]\times(0,3.048)$\,[m].  The remaining parameters are $\mathtt{a}^*=25$ and $\mu = 10^{-6}\,[\text{KPa}\cdot\text{s}]$.  On the inlet (the left segment of the boundary) 
we impose the inflow velocity $\bu = (10,0)^{\tt t}\,$[m/s], on the outlet (the right vertical segment) we set zero normal stress, and on the horizontal walls we impose no-slip conditions. 

The flow patterns are shown in Figure \ref{fig:spe}, and the qualitative behaviour coincides with the expected filtration mechanisms observed elsewhere (see, e.g., \cite{krot}). The magnitude of velocity in its second post-process $|\bu^*_h|$, and the stress magnitude are plotted in logarithmic scale. 

\begin{figure}[t!]
  \begin{center}
    \includegraphics[width=0.325\textwidth]{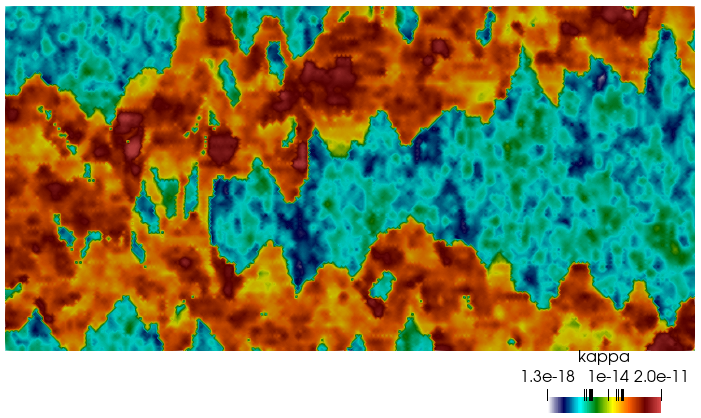}
    \includegraphics[width=0.325\textwidth]{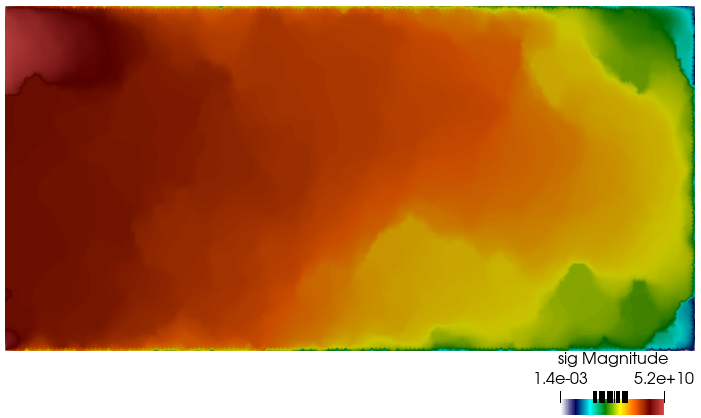}
\includegraphics[width=0.325\textwidth]{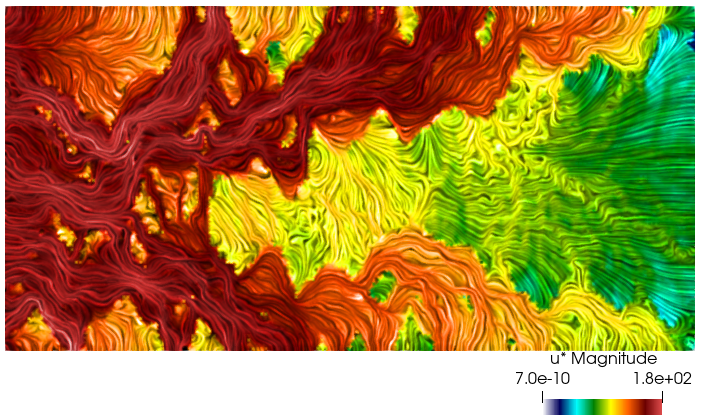}
  \end{center}

  \vspace{-4mm}
  \caption{Channel flow with permeability from the SPE10--layer 45 benchmark data, and using a twice quadrisected crisscrossed mesh. Heterogeneous permeability distribution in log scale, Cauchy stress magnitude  in log scale, and line integral convolution of second post-process of velocity in log scale.}\label{fig:spe}
\end{figure}

\medskip
\noindent\textbf{3D flow through porous media.} 
To close this section, in this test we use a cylindrical geometry of radius 0.2\,[m] and height  0.7\,[m], discretized into an unstructured simplicial barycentric quadrisected mesh of 48756 tetrahedra. We set the viscosity to the value of water $\mu = 8.9\cdot 10^{-4}\,[\mathrm{Pa}\cdot\mathrm{s}]$ and again use inlet (the face $z=0$), wall (the surface $r=0.2$\,[m]), and outlet (the face $z=0.7$\,[m]) type of boundary conditions mimicking a channel flow with homogeneous right-hand side in the momentum balance $\bF= \boldsymbol{0}$. The inlet velocity is $\bG_{\mathrm{in}} = (0,0,1)^{\tt t}\,[\mathrm{m/s}]$. A synthetic permeability field is generated based on a porosity of approximately 0.4. For this we generate a normalized uniform random distribution, apply a Gaussian filter with standard deviation of 2, and then use a quantile of 40\%. The resulting field is rescaled between  $\kappa_{-} = 10^{-12}\,[\text{m}^2]$ and $\kappa_{+} = 10^{-4}\,[\text{m}^2]$. Figure~\ref{fig:3D} presents the permeability and contour iso-surfaces of velocity magnitude, which indicate the formation of wormhole-like structures with higher velocity. 

\begin{figure}[t!]
  \begin{center}
\includegraphics[width=0.325\textwidth]{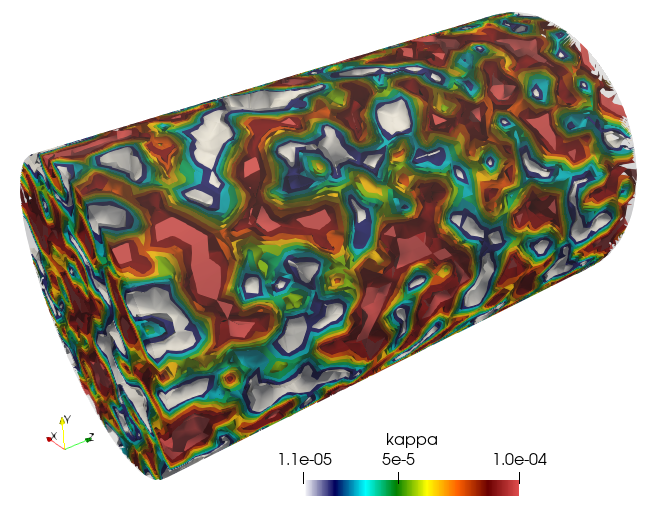}
\includegraphics[width=0.325\textwidth]{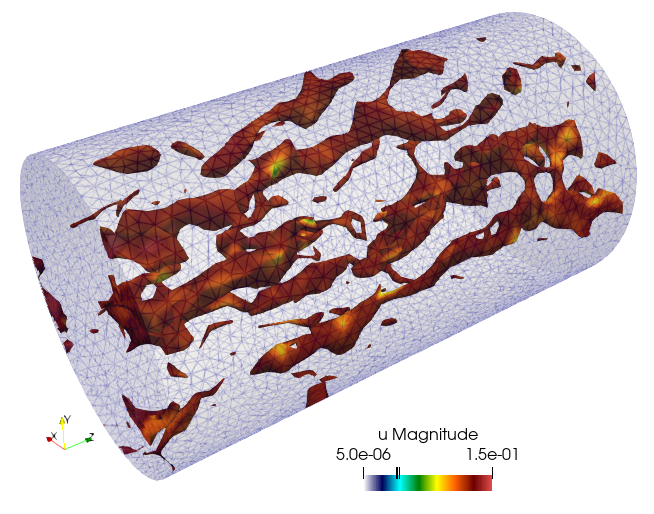}
\includegraphics[width=0.325\textwidth]{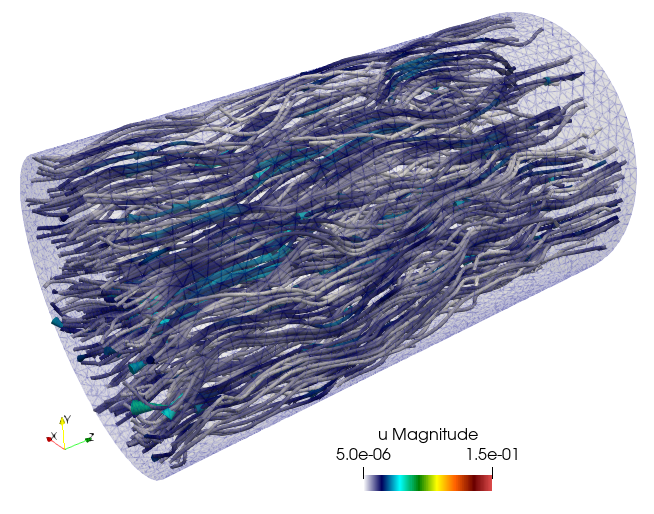}
  \end{center}

  \vspace{-4mm}
  \caption{Channel flow with synthetic permeability. The mesh is of simplicial barycentric quadrisected type. Heterogeneous permeability distribution,  contour iso-surfaces of velocity magnitude in log scale, and velocity streamlines.}\label{fig:3D}
\end{figure}





\begin{thebibliography}{99}
  \small

  \bibitem{aarnes07}
{\sc J.~E. Aarnes, T.~Gimse, and K.-A. Lie}, {\em An introduction to the
  numerics of flow in porous media using {Matlab}}, in Geometric Modelling,
  Numerical Simulation, and Optimization: Applied Mathematics at SINTEF,
  G.~Hasle, K.-A. Lie, and E.~Quak, eds., Springer Berlin Heidelberg, Berlin,
  Heidelberg, 2007, pp.~265--306.

\bibitem{alnaes} {\sc M.S Aln{\ae}s, J. Blechta, J. Hake,
A. Johansson, B. Kehlet, A. Logg, C. Richardson, J. Ring, M.E. Rognes, and G.N. Wells}, {\it The FEniCS project version 1.5}. 
  Arch. Numer. Softw., \textbf{3}(100) (2015), pp. 9--23.

\bibitem{alvarez16} {\sc M. \'Alvarez, G. N. Gatica, and R. Ruiz-Baier},  {\it Analysis of a vorticity-based fully-mixed formulation for the
    3D Brinkman-Darcy problem}. {Comput. Methods Appl. Mech. Engrg.}, \textbf{307} (2016), pp.   
    68--95. 
  
\bibitem{anaya15}
{\sc V. Anaya, G.N. Gatica, D. Mora, and R. Ruiz-Baier},
{\it An augmented velocity-vorticity-pressure formulation for the Brinkman equations}.
Int. J. Numer. Methods Fluids, \textbf{79}(3) (2015), pp. 109--137.
    
\bibitem{anaya16} {\sc V. Anaya, D. Mora, R. Oyarz\'ua, and
R. Ruiz-Baier}, {\it A priori and a posteriori error analysis of a mixed 
scheme for the Brinkman problem}. Numer. Math., \textbf{133}(4) (2016), 
 pp. 781--817.
  
\bibitem{anaya15b} {\sc V. Anaya, D. Mora, C. Reales, and R. Ruiz-Baier}, {\it Stabilized mixed approximation of axisymmetric
  Brinkman flows}.  {ESAIM: Math. Model. Numer. Anal.}, \textbf{49}(3) (2015), pp.    
  855--874. 
  
\bibitem{BoffiBrezziFortinBook}
\textsc{D. Boffi, F. Brezzi, and M. Fortin},
Mixed Finite Element Methods and Applications.
Springer Series in Computational Mathematics, 44. Springer, Heidelberg, 2013.

\bibitem{cgs17} {\sc E. Caceres, G. N. Gatica, and F. A. Sequeira}, {\it A mixed virtual element method for the Brinkman problem}. Math. Models Methods Appl. Sci., \textbf{27}(4) (2017), pp. 707--743.

  \bibitem{christie01}
{\sc M.~A. Christie and M.~Blunt}, {\em Tenth {SPE} comparative solution
  project: {A} comparison of upscaling techniques}, SPE Reservoir Eval. 
  Engrg., \textbf{4} (2001), pp.~308--317.

\bibitem{DiPietroErn}
{\sc D. N. Di Pietro and A. Ern},
Mathematical Aspects of Discontinuous Galerkin Methods. 
Springer-Verlag Berlin Heidelberg 2012.




\bibitem{ern1}
\textsc{A. Ern and J.-L. Guermond},
Finite elements {II}---{G}alerkin approximation, elliptic and mixed {PDE}s,
Texts in Applied Mathematics, Vol. 73, Springer, 2021. 

\bibitem{ern}
\textsc{A. Ern and J.-L. Guermond}, 
\textit{Finite element quasi-interpolation and best approximation}.
ESAIM Math. Model. Numer. Anal., \textbf{51}(4) (2017), 1367--1385. 

\bibitem{falk}
\textsc{R. S. Falk},
Finite element methods for linear elasticity.
In: F. Brezzi, D. Boffi, L. Demkowicz, and R. G. Dur\'an (eds.) 
Mixed Finite Elements, Compatibility Conditions, and Applications,
pp. 159--194. Springer, Berlin (2008). 

\bibitem{GabrielAntonio}
\textsc{G.~N. Gatica, L.~F. Gatica, and A. M\'arquez},
\textit{Analysis of a pseudostress-based mixed finite element method for the {B}rinkman model of porous media flow}.
Numer. Math., \textbf{368} (2014), 635--677.

\bibitem{gms18} {\sc G.~N. Gatica, M. Munar, and F. A. Sequeira}, {\it A mixed virtual element method for a nonlinear Brinkman model of porous media flow}. Calcolo, \textbf{55}(2) (2018), article 21.

\bibitem{guzman}
\textsc{J. Gopalakrishnan and J. Guzm\'an},
\textit{A second elasticity element using the matrix bubble}. 
IMA J. Numer. Anal., \textbf{32}(1) (2012), pp. 352--372. 

\bibitem{gn12} {\sc J. Guzm\'an and M. Neilan},  {\it  
A family of nonconforming elements for the Brinkman problem}.
{IMA J. Numer. Anal.}, \textbf{32}(4) (2012), pp.  1484--1508.

\bibitem{hong16b} {\sc Q. Hong and J. Kraus}, {\it Uniformly stable discontinuous Galerkin discretization and robust iterative solution methods for the Brinkman problem}. SIAM J. Numer. Anal., \textbf{54}(5) (2016), pp.   2750--2774.
				


\bibitem{howell16} {\sc J. S. Howell, M. Neilan, and  N. J. Walkington}, {\it A dual-mixed finite element
  method for the Brinkman problem}. SMAI J. Comput. Math., \textbf{2} (2016), pp. 1--17.

\bibitem{kan} {\sc G.  Kanschat, R. Lazarov, and Y. Mao}, {\it Geometric multigrid for Darcy and Brinkman models of flows in highly heterogeneous porous media: A numerical study}. J. Comput. Appl. Math., \textbf{310} (2017), pp. 174--185.

\bibitem{konno11} {\sc J.\ K\"onn\"o and R.\ Stenberg}, {\it H(div)-conforming finite elements for the Brinkman problem}.  Math.\ Models Methods Appl.\ Sci., \textbf{21} (2011), pp. 2227--2248. 	

\bibitem{krot} {\sc M. Krotkiewski, I. S. Ligaarden, K.-A. Lie, and D. W. Schmid}, {\it On the importance of the Stokes-Brinkman equations for computing effective permeability in Karst reservoirs}.  Commun. Comput. Phys., \textbf{10} (2011), pp. 1315--1332.
  
\bibitem{li19} {\sc X. Li and W. Xu}, {\it Numerical computation of Brinkman flow with stable mixed element method}. Math. Prob. Engrg., (2019), ID 7625201. 

\bibitem{MMT}
 {\sc A. M\'arquez, S. Meddahi, and T. Tran},
 \textit{Analyses of mixed continuous and discontinuous {G}alerkin methods for the time harmonic 
 elasticity problem with reduced symmetry},
 SIAM J. Sci. Comput., \textbf{37} (2015), pp. 1909--1933.
 
 \bibitem{mr22} {\sc S. Meddahi and R. Ruiz-Baier}, {\it Symmetric mixed discontinuous Galerkin methods for linear viscoelasticity}. ArXiv preprint \url{http://arxiv.org/abs/2203.01662}  
(2022).

\bibitem{Qian}
\textsc{Y. Qian, S. Wu, and F. Wang},
\textit{A mixed discontinuous Galerkin method with symmetric stress for Brinkman problem based on the velocity--pseudostress formulation}.
Comput. Methods Appl. Mech. Engrg., \textbf{126}(4) (2020), article 113177.



\bibitem{vogelius}
\textsc{L. R. Scott and M. Vogelius},
\textit{Norm estimates for a maximal right inverse of the divergence
              operator in spaces of piecewise polynomials}.
RAIRO Mod\'{e}l. Math. Anal. Num\'{e}r., \textbf{19}(1) (1985), pp. 111--143.

\bibitem{vassilevski14} {\sc P. S. Vassilevski and U. Villa}, {\it A
  mixed formulation for the Brinkman problem}.  {SIAM J. Numer. Anal.},
  \textbf{52}(1) (2014), pp.  258--281.

\bibitem{wang}
 {\sc F. Wang, S. Wu, and J. Xu},
 \textit{A mixed discontinuous Galerkin method for linear elasticity with strongly imposed symmetry},
 J. Sci. Comput. \textbf{83} (2020), article 2.


\end{thebibliography}
\end{document}